\DeclareSymbolFont{newfont}{OML}{cmm}{m}{it}
\DeclareMathSymbol{\Varepsilon}{3}{newfont}{15}
\DeclareMathSymbol{\Varrho}{3}{newfont}{37}
\newtheorem*{theorem*}{Theorem}
\newtheorem*{lemma*}{Lemma}
\def\paragraph#1{\par\medskip\noindent\textbf{#1}}
\def\wr{\mathop{\rm wr}\nolimits}
\let\eref=\eqref
\def\bse{\begin{subequations}}
\def\ese{\end{subequations}}
\def\Complex{{\mathbb C}}
\def\Real{{\mathbb R}}
\def\Re{{\mathrm{Re}}}
\def\Im{{\mathrm{Im}}}
\def\re{{\mathrm{re}}}
\def\im{{\mathrm{im}}}
\def\asymp{{\mathrm{asymp}}}
\def\err{{\mathrm{err}}}
\def\overl@ss#1#2{\vcenter{\offinterlineskip
        \ialign{$\m@th#1\hfil##\hfil$\crcr#2\crcr<\crcr } }}
\def\gl{\mathrel{\mathpalette\overl@ss>}}
\def\@#1{{\mathbf{#1}}}
\definecolor{Red}{rgb}{1.00, 0.00, 0.00}
\definecolor{DarkGreen}{rgb}{0.00, 1.00, 0.00}
\definecolor{Blue}{rgb}{0.00, 0.00, 1.00}
\definecolor{Cyan}{rgb}{0.00, 1.00, 1.00}
\definecolor{Magenta}{rgb}{1.00, 0.00, 1.00}
\definecolor{DeepSkyBlue}{rgb}{0.00, 0.75, 1.00}
\definecolor{DarkGreen}{rgb}{0.00, 0.39, 0.00}
\definecolor{SpringGreen}{rgb}{0.00, 1.00, 0.50}
\definecolor{DarkOrange}{rgb}{1.00, 0.55, 0.00}
\definecolor{OrangeRed}{rgb}{1.00, 0.27, 0.00}
\definecolor{DeepPink}{rgb}{1.00, 0.08, 0.57}
\definecolor{DarkViolet}{rgb}{0.58, 0.00, 0.82}
\definecolor{SaddleBrown}{rgb}{0.54, 0.27, 0.07}
\definecolor{Black}{rgb}{0.00, 0.00, 0.00}
\definecolor{dark-magenta}{rgb}{.5,0,.5}
\definecolor{myblack}{rgb}{0,0,0}
\definecolor{darkgray}{gray}{0.5}
\definecolor{lightgray}{gray}{0.75}
\newcommand{\p}{\partial}
\def\refer #1\par{\noindent\hangindent=\parindent\hangafter=1 #1\par}
\renewcommand{\theequation}{\thesection.\arabic{equation}}
\theoremstyle{plain}  
\newtheorem{theorem}{Theorem}[]
\newtheorem{lemma}{Lemma}[section]
\newtheorem{corollary}{Corollary}[section]
\theoremstyle{definition}
\newtheorem{rhp}{Riemann-Hilbert Problem}[section]
\newtheorem{remark}{Remark}[section]
\numberwithin{figure}{section}
\numberwithin{equation}{section}
\begin{document}

\title{Long-time asymptotics for the focusing\\
nonlinear Schr\"odinger equation with nonzero \\
boundary conditions at infinity and\\
asymptotic stage of modulational instability}

\author{Gino Biondini \& Dionyssios Mantzavinos}

\keywords{}

\subjclass[2010]{
35Q55, 
37K15, 
37K40, 
35Q15, 
33E05, 
14K25. 
}

\begin{abstract}
The long-time asymptotic behavior of the focusing nonlinear Schr\"odinger (NLS) equation on the line with symmetric nonzero  boundary conditions   at infinity is characterized by using the recently developed inverse scattering transform (IST)  for such problems \cite{bk2014} and by employing the nonlinear steepest descent method of Deift and Zhou for oscillatory Riemann-Hilbert problems \cite{dz1993}.
First, the  IST is formulated over a single sheet of the complex plane without introducing the uniformization variable that was used in \cite{bk2014}.
The solution of the focusing NLS equation with nonzero  boundary conditions is thus associated with a suitable matrix Riemann-Hilbert problem   whose jumps grow exponentially with time for certain portions of the continuous spectrum. 
This growth is the signature of the well-known modulational instability within the context of the IST.
This growth is then removed by suitable deformations of the Riemann-Hilbert problem in the complex spectral plane. 
Asymptotically in time, the $xt$-plane is found to decompose into two types of regions: 
a left far-field region and a right far-field region, where the solution  equals the condition at infinity to leading order up to a phase shift, and a central region in which the asymptotic behavior is described by slowly modulated periodic oscillations.  In the latter region, it is also shown that the modulus of the leading order solution, which is initially obtained in the form of a ratio of Jacobi theta functions, eventually reduces to the well-known elliptic solution of the focusing NLS equation.
These results provide the first characterization of the long-time behavior of generic perturbations of a constant background in a modulationally unstable medium.
\end{abstract}

\date{December 17, 2015}

\maketitle

\markboth{Long-time asymptotics for focusing NLS with nonzero  boundary conditions and asymptotic stage of modulational instability}
                    {G. Biondini \& D. Mantzavinos}



\section{Introduction}

The present work is devoted to the study of the long-time asymptotic behavior of the one-dimensional focusing nonlinear Schr\"odinger (NLS) equation formulated on the line with symmetric, nonzero   boundary conditions at infinity:
%
\begin{gather}
i q_t + q_{xx} + 2\left(|q|^2-q_{o} ^2\right)q=0,
\label{e:NLS}
\\
\noalign{\noindent with $q=q(x,t)$ complex-valued, $(x, t)\in\mathbb R\times \mathbb R^+$,
$q(x, 0)$ given, and with}
 \lim_{x\to\pm\infty} q(x,0) = q_\pm\,.
\label{e:NZBC}
\end{gather}
%
Herafter, $q_\pm$ are complex constants with $\left|q_\pm\right|=q_{o} >0$, and
\begin{equation}
q(x,0)-q_\pm\in  L^{1, 1}(\mathbb R^\pm),
\label{e:q0L1}
\end{equation}
where
\begin{equation}
L^{1, 1}(\mathbb R^\pm)
=
\Big\{ f: \mathbb R\to \mathbb C\, \Big| \int_{\mathbb R^\pm}\left(1+ |x| \right)\left|f(x)\right|dx<\infty\Big\}.
\end{equation}
The NLS equation is an important model in applied mathematics and theoretical physics due to both its surprisingly rich mathematical structure and to its physical significance and broad applicability to a number of different areas ranging from nonlinear optics to water waves and from  plasmas to Bose-Einstein condensates.
In particular, recall that the NLS equation is a universal model for the evolution of almost monochromatic waves in a weakly nonlinear dispersive medium
\cite{bn1967,ce1987}.
%
Specifically, 
the equation was derived in the context of nonlinear optics \cite{cgt1964,t1964},
in the framework of fluid mechanics for water waves of small amplitude over infinite depth 
\cite{z1968} 
and finite depth \cite{br1969,ho1972}.
A rigorous justification of the model in the latter case was also given in \cite{css1992}.
The equation has also been suggested as a model for rogue waves \cite{per1983,cha2011}.
Further references on the physical aspect of the NLS equation can be found in \cite{ss-book,l2013}. 
Moreover,
the NLS equation is also one of the prototypical infinite-dimensional completely integrable systems,
\cite{zs1972} where a Lax pair was derived, 
and it was shown that the equation admits an infinite number of conservation laws as well as exact $N$-soliton solutions for arbitrary $N$. 

The initial value problem for the NLS equation has been studied extensively during the past five decades, 
and a plethora of results is available.
Assuming sufficient smoothness of the initial data, in 1972 Zakharov and Shabat \cite{zs1972} developed 
the inverse scattering transform (IST) for the solution of the initial value problem on the line for initial conditions with sufficiently rapid decay at infinity.
In 1974 Ablowitz, Kaup, Newell and Segur \cite{akns1973} generalized those results by introducing the so-called AKNS system, 
which contains the NLS equation as a special case. 
The IST for first order systems was also developed rigorously by Beals and Coifman in 1984 \cite{bc1984}.
The periodic problem for NLS was studied by Its and Kotlyarov in 1976 \cite{ik1976},
while the half-line and  the finite interval problems were analysed via an appropriate extension of IST for bounded domains by Fokas and others \cite{fi2004,fis2005}.
Problems with \textit{nonzero } boundary conditions at infinity of the kind~\eqref{e:NZBC} have also been studied.
The IST for the \textit{defocusing} NLS equation on the line with nonzero  boundary conditions at infinity was developed soon after the case of zero boundary conditions \cite{zs1973,ft1987}.
The IST for  the \textit{focusing} NLS equation  on the line with nonzero  boundary conditions  at infinity was recently developed by Kova\v{c}i\v{c} and the first author \cite{bk2014}.
(Partial results were contained in \cite{k1977, m1979, gk2012}.) 
%
From a different point of view,
sharp  well-posedness of  the NLS equation on the line with initial data in Sobolev spaces $H^s$ for any $s\ge0$ was proved by  Bourgain  \cite{Bourgain1993} (see also \cite{Bourgain1999}). 
Well-posedness of the NLS equation on the half-line with data in Sobolev spaces was established independently and via different approaches by Holmer  \cite{h2005},
Bona, Sun and Zhang  \cite{bsz2015} and Fokas, Himonas and the second author \cite{fhm2015}.
Further functional analysis results can be found in 
Craig,  Kappeler and Strauss  
\cite{cks1995},
Cazenave
\cite{Cazenave2003}, 
Cazenave and Weissler
\cite{cw2003}, 
Kenig, Ponce and Vega \cite{kpv1991},
Ghidaglia and Saut
\cite{gs1993},
Linares and Ponce
\cite{LP2009},
Carroll and Bu 
\cite{cb1991},
Kato
\cite{Kato1995},
Ginibre and Velo
\cite{gv1979},
Tsutsumi
\cite{Tsutsumi1987},
and the references therein.

The motivation for the present work stems from its intriguing connection with the physical phenomenon of modulational instability,
(also known as Benjamin-Feir instability in the context of deep water waves \cite{bf1967}),
namely, the instability of a constant background with respect to long wavelength perturbations.
Modulational instability is one of the most ubiquitous phenomena in nonlinear science (e.g., see \cite{zo2009} and the references therein).
In many cases the dynamics of systems affected by modulational instability is governed by the one-dimensional focusing NLS equation.
Hence, the initial (i.e., linear) stage of  modulational instability can be studied by linearizing  the focusing NLS equation   around the constant background. It is then easily seen that all Fourier modes below a certain threshold are unstable and the corresponding perturbations grow exponentially.
However, the linearization ceases to be valid as soon as perturbations  become comparable with the background.
A natural question  is  therefore what happens beyond this linear stage. 
Surprisingly, this question, which is referred to as \textit{the nonlinear stage of modulational instability}, 
has remained essentially open for about fifty years.
  
Since the focusing NLS equation is a completely integrable system,
a natural conjecture 
(by analogy with the case of localized initial conditions)
is that modulational instability is mediated by solitons~\cite{{zg2013},{gz2014}}.
On the other hand, 
Fagerstrom and the first author \cite{bf2015} 
 employed the recently developed IST for the focusing NLS equation with nonzero  boundary conditions at infinity 
in order to study modulational instability, 
by computing the spectrum of the scattering problem for simple classes of perturbations of a constant background.
In particular, it was shown in \cite{bf2015} that there exist classes of perturbations for which no solitons are present.
Therefore, since all generic perturbations of the constant background are linearly unstable, 
\textit{solitons cannot be the mechanism that mediates modulational instability},
contradicting the conjecture of \cite{zg2013}.
Instead, in \cite{bf2015} the instability mechanism was identified within the context of IST and it was shown that the instability emerges 
from certain portions of the continuous spectrum of the scattering problem associated with the focusing NLS equation.
At the same time, \cite{bf2015} did not offer any insight about the actual nonlinear dynamics of the solutions of the focusing 
NLS equation past the linear stage of modulational instability.

The aim of the present work is to address this challenge and characterize the nonlinear stage of modulational instability. 
We do so by computing the long-time asymptotic behavior  
of the solution of the focusing NLS equation. 
Recall that formal but ingenious results for the long-time asymptotics of the NLS equation with zero boundary conditions at infinity were obtained
in 1976 by Segur and Ablowitz~\cite{sa1976} and Zakharov and Manakov \cite{zm1976}.
In 1993, Deift and Zhou \cite{dz1993} introduced a method for the \textit{rigorous} asymptotic analysis of oscillatory Riemann-Hilbert problems
such as those arising in the inverse problem for the solution of integrable evolution equations via IST.
The Deift-Zhou method can be regarded as the nonlinear analogue of the classical steepest descent method used in the analysis of the long-time asymptotics for linear evolution equations.
The method relies on appropriate factorizations of the jump matrices of the Riemann-Hilbert problem 
and suitable deformations of the associated jump contours 
in order to extract the leading order asymptotic behavior as well as obtain rigorous estimates for the corrections.

The Deift-Zhou method was first applied to compute the long-time asymptotics of the modified KdV equation \cite{dz1993}, 
and was subsequently extended and applied in numerous works, 
including the study of the long-time asymptotics of the KdV equation \cite{dvz1994}, 
of the defocusing NLS equation \cite{dz1995} and the Toda lattice \cite{k1993}, 
all with decaying data at infinity, 
as well as the analysis of the zero dispersion (semiclassical) limit of the KdV equation \cite{dvz1997} and the focusing NLS equation 
\cite{{mk1998},{tvz2004}}.
We also note that the Deift-Zhou method has found useful applications in the theory of orthogonal polynomials \cite{{d1999},{dkmvz1999}}.
Recent results concerning the NLS equation were also presented in the works of Buckingham and Venakides \cite{bv2007}, Boutet-de Monvel, Kotlyarov and Shepelsky \cite{bks2011}, Jenkins and McLaughlin \cite{jm2013} and Jenkins \cite{j2014}.
On the other hand, none of those works studied the problem considered in the present work, namely, 
the long-time asymptotics of the solution of the focusing NLS equation~\eqref{e:NLS}
with \textit{generic} initial conditions satisfying~\eqref{e:NZBC} and~\eqref{e:q0L1}.
Our results are summarized by the following theorems:
\begin{theorem}[Plane wave region]\label{pw-t}
Let $q(x,0)$ satisfy~\eqref{e:NZBC}, ~\eqref{e:q0L1} and~\eqref{an-cond} and be such that 
no discrete spectrum is present.
For $x<-4\sqrt 2 q_{o}  \, t$, the solution $q(x,t)$ of the focusing NLS equation~\eref{e:NLS} 
equals
\begin{equation}\label{qsol-pw-t}
q(x, t)
=
e^{2i g_\infty}q_-
+O\big(t^{-\frac 12}\big),
\quad t\to \infty,
\end{equation}
where the real quantity $g_\infty $ is defined by equation \eqref{ginfdef} and depends only on the initial datum $q(x, 0)$ and  the ratio $x/t$ through the stationary point $k_1$ defined by equation \eqref{kroots}.

For $x>4\sqrt 2 q_{o} \, t$, the leading order asymptotic behavior of the solution $q$ is given by a formula analogous to formula \eqref{qsol-pw-t}, with $q_-$ replaced by $q_+$ and with $k_1$ in the definition of $g_\infty $  replaced by the stationary point $k_2$, also defined by equation \eqref{kroots}. 
\end{theorem}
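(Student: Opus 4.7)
\emph{Plan of proof.} The starting point is the matrix Riemann--Hilbert problem (RHP) for $M(k;x,t)$ produced by the IST of \cite{bk2014} and reformulated on a single sheet of the $k$-plane earlier in the paper. The jump contour is $\Sigma=\mathbb R\cup\Sigma_o$ with $\Sigma_o=[-iq_o,iq_o]$ the branch cut of $\lambda(k)=\sqrt{k^2+q_o^2}$, and the $(x,t)$-dependence enters only through the phase
\[
\theta(k;x,t) \;=\; \lambda(k)\bigl(x + 2kt\bigr).
\]
The stationary points of $\theta$ solve $4tk^2 + xk + 2q_o^2\, t = 0$; they are real and distinct precisely when $|x|>4\sqrt 2\, q_o t$, which is the range of the theorem, and these are the $k_1,k_2$ of~\eqref{kroots}.

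The key difficulty is that $\theta$ acquires a nonzero imaginary part on $\Sigma_o$, so one of the off-diagonal jump entries grows like $e^{ct}$ there---the RHP imprint of modulational instability. I remove this growth through a $g$-function transformation $\widetilde M = M\,e^{-ig(k;x,t)\sigma_3}$, where $g$ is scalar, analytic on $\mathbb{C}\setminus\Sigma$, of the form $g_\infty + O(1/k)$ at infinity, and constructed so that $(g-\theta)_+ + (g-\theta)_- = 0$ on $\Sigma_o$, which converts the exponentially growing jump there into a bounded constant-modulus jump. In the plane-wave region this is a \emph{genus-zero} configuration: $g'(k)$ is forced to be a simple algebraic function on the Riemann surface of $\lambda$ with one free parameter fixed by a single moment condition, and the constant $g_\infty$ is the explicit real contour integral~\eqref{ginfdef} of the scattering data, depending on $x/t$ only through the relevant stationary point $k_1$ (resp.\ $k_2$).

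Once $g$ is fixed I carry out the standard Deift--Zhou steps: upper/lower factorization of the $\mathbb{R}$-jump, opening of lenses through each stationary point so that $e^{\pm 2i(\theta - g)}$ decays on the new pieces (whose validity is read off from the signature chart of $\operatorname{Im}(\theta - g)$ implied by the construction of $g$), and construction of an outer parametrix $\widetilde M^{(\infty)}$ solving the remaining constant-jump RHP on $\Sigma_o$ in closed form via elementary functions on the Riemann surface of $\lambda$. Local parametrices at $k_1,k_2$ are built from parabolic cylinder functions as in \cite{dz1993,dz1995}, matching $\widetilde M^{(\infty)}$ up to $O(t^{-1/2})$. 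A small-norm argument on the ratio $\widetilde M (\widetilde M^{(\infty)})^{-1}$ then gives $\widetilde M = \widetilde M^{(\infty)}(I + O(t^{-1/2}))$ uniformly in $k$; combined with the reconstruction formula $q(x,t) = -2i\lim_{k\to\infty} k\, M_{12}(k;x,t)$ and the large-$k$ expansion of $\widetilde M^{(\infty)} e^{ig\sigma_3}$, one obtains $q(x,t) = e^{2ig_\infty}q_\pm + O(t^{-1/2})$, with $q_-$ for $x<-4\sqrt 2\, q_o t$ and $q_+$ for $x>4\sqrt 2\, q_o t$.

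The main obstacle is the construction and validation of $g$. Unlike in the decaying-data case, the branch cut $\Sigma_o$ is intrinsic to the focusing NZBC spectral plane and cannot be deformed away, while the genus-zero Ansatz for $g$ is legitimate only if the sign chart of $\operatorname{Im}(\theta - g)$ has the correct topology everywhere off $\Sigma$. Verifying that this topology does hold throughout $|x|>4\sqrt 2\, q_o t$, and breaks precisely at $|x|=4\sqrt 2\, q_o t$, where $k_1,k_2$ coalesce and a genus-one $g$-function must take over (producing the central modulated region treated by the companion theorem), is the core technical point; once it is in hand, the remaining contour analysis follows the standard Deift--Zhou template.
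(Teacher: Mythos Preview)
Your overall architecture (Deift--Zhou deformations, outer parametrix with constant jump on the cut, parabolic-cylinder local models, small-norm error) is right, but the central mechanism you invoke to tame the branch cut does not work as stated, and this is precisely the step that distinguishes the plane-wave region from the modulated one.

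First a minor point: the phase is $\theta=\lambda(k)(\xi-2k)$ (equivalently $\vartheta=\lambda(x-2kt)$), not $\lambda(x+2kt)$; correspondingly the stationary-point quadratic is $4tk^2-xk+2q_o^2t=0$. This is cosmetic.

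The real gap is in your $g$-function step. On the cut $\Sigma_o=B$ the exponentials $e^{\pm 2i\theta t}$ sit in the \emph{diagonal} entries of the jump matrices $V_2,V_3$ (see \eqref{rhp0-jumps}), not the off-diagonal ones. Conjugation $\widetilde M=M e^{-ig\sigma_3}$ multiplies the $(1,1)$ and $(2,2)$ entries by $e^{\pm i(g^--g^+)}$, and you have stipulated that $g$ is bounded with $g\to g_\infty$ (the $t$-independent constant of \eqref{ginfdef}); a bounded $g$ cannot cancel $O(t)$ growth. Moreover, since $\lambda^+=-\lambda^-$ on $B$ one already has $\theta^++\theta^-=0$ there, so your condition $(g-\theta)^++(g-\theta)^-=0$ reduces to $g^++g^-=0$, a $t$-independent constraint that says nothing about the exponential growth. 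With the growing diagonal entries still present on $B$, your outer parametrix $\widetilde M^{(\infty)}$ (which solves a constant-jump problem on $B$) will not absorb the actual jump, the ratio $\widetilde M(\widetilde M^{(\infty)})^{-1}$ will carry a jump on $B$ that blows up with $t$, and the small-norm argument collapses.

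What the paper actually does in this region is different in kind: the growth on $B$ is removed \emph{algebraically}, not by a $g$-function. One factors $V_2^{(0)}=(V_{3-}^{(1)})^{-1}V_B\,V_{3+}^{(1)}$ and $V_3^{(0)}=V_{4-}^{(1)}V_B\,(V_{4+}^{(1)})^{-1}$ as in \eqref{cut1m}, with $V_B$ the constant off-diagonal matrix \eqref{v18}. The triangular flanking factors $V_{3}^{(1)},V_{4}^{(1)}$ carry the exponentials and are pushed off $B$ into the adjacent regions; they decay there precisely because, for $|\xi|>4\sqrt2 q_o$, the sign chart of $\Re(i\theta)$ allows a deformation around $B$ staying entirely in the favorable sign (Figure~\ref{f:imtheta}a). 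This is the step that singles out the plane-wave range and fails exactly at $|\xi|=4\sqrt2 q_o$. After that, $\delta$ removes the diagonal jump on $(-\infty,k_1)$, and the paper's $g$ is a small, \emph{$t$-independent} correction whose only role is to undo the $\delta^{\pm2}$ that $\delta$-conjugation introduced into the (now otherwise constant) jump on $B$; that is what produces $g_\infty$ in \eqref{ginfdef}. Your proposal conflates this modest $g$ with the $t$-scaled phase function $h$ used in the modulated region (Section~\ref{mew-sec}), where a genuine genus-one $g$-mechanism is needed because the factorization route around $B$ is blocked.
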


\begin{theorem}[Modulated elliptic wave region]\label{mew-t}
Let $q(x,0)$ satisfy the hypotheses of Theorem~\ref{pw-t}.
For $-4\sqrt 2 q_{o} \, t<x<0$, the solution of the focusing NLS equation~\eref{e:NLS} 
equals 
\begin{align}\label{qsol-mew-t}
q(x, t)
&=
\frac{q_{o} \left(q_{o} +\alpha_\im\right) }{\bar q_-}
\
\frac{\Theta\left(\frac 12 \right)
\Theta\Big(\frac{\sqrt{\alpha_\re^2+\left(q_{o} +\alpha_\im\right)^2}}{2K(m)}
\left(x-2\alpha_\re t\right)
-
X
+2v_\infty -\frac 12 \Big)}
{
\Theta\left(2v_\infty -\frac 12\right)
\Theta\Big(\frac{\sqrt{\alpha_\re^2+\left(q_{o} +\alpha_\im\right)^2}}{2K(m)}
\left(x-2\alpha_\re t\right)
-X-\frac 12 \Big)}
\,
e^{2i\left(g_\infty -G_\infty  t\right)}
\nonumber\\
&\hskip 8.65cm
+
O\big(t^{-\frac 12}\big),
\quad  t\to \infty,
\end{align}
where $\Theta$ is defined in terms of the third Jacobi theta function by equation \eqref{thetjdef},  the real quantities $\alpha_\re,$ $\alpha_\im$ are given by equations \eqref{ak0system}, the real quantities $G_\infty $, $g_\infty $ are defined by equations  \eqref{Ginf} and~\eqref{ginfr2} respectively, the complex quantity $v_\infty $ is defined by equation \eqref{vdefr}, $K(m)$ is the complete elliptic integral of the first kind defined by equation \eqref{km-def} with elliptic modulus $m$ given by equation~\eqref{ell-mod}, and the real quantity $X$ is defined by
\begin{equation}\label{Xdef}
X = \frac{1}{2\pi}\left[\omega - i\ln\Big(\frac{q_-}{q_{o} }\Big)\right] + \frac14
\end{equation}
with the real quantity $\omega$  given by equation \eqref{omr}.
Moreover, except for the explicit dependence on $x$ and $t$ in~\eref{qsol-mew-t},
all of the above quantities depend only on the initial datum $q(x, 0)$ and  the ratio $x/t$.

For $0<x<4\sqrt 2 q_{o} \, t$, the leading order asymptotic behavior of the solution $q$ is given by a formula analogous to formula \eqref{qsol-mew-t}, with $\bar q_-$ replaced by $\bar q_+$ and  the remaining quantities modified accordingly.
\end{theorem}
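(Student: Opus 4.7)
The plan is to apply the Deift--Zhou nonlinear steepest descent method to the Riemann--Hilbert problem (RHP) formulated earlier in the paper, now modified by a genus-one $g$-function in order to defeat the exponential growth of the jump matrices that is the hallmark of modulational instability. Starting from the meromorphic matrix $M(k;x,t)$ whose jump on $\Sigma=\mathbb{R}\cup[-iq_o,iq_o]$ carries the oscillatory factor $e^{2it\theta(k;x/t)}$, I would first identify, for $-4\sqrt{2}q_o<x/t<0$, the stationary-phase structure of $\theta$ and observe that on part of the branch cut $\Im\theta<0$, which is precisely the MI instability mechanism isolated in \cite{bf2015}. The cure is to pass to a conjugated matrix $\widetilde M=Me^{-itg(k;x/t)\sigma_3}$, where $g$ is chosen so that $\Im(\theta+g)$ has a signature compatible with the usual lens factorization everywhere on the deformed contour.

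The construction of $g$ is the heart of the proof. I would seek $g$ as an abelian integral on the hyperelliptic curve $y^2=(k^2-q_o^2)(k-\alpha)(k+\bar\alpha)$ of genus one, with a pair of extra branch points $\alpha=\alpha_\re+i\alpha_\im$ and $-\bar\alpha$ to be determined. Requiring that (i) $g+\theta$ be bounded at infinity with the correct normalization (contributing the constants $G_\infty$ and $g_\infty$), (ii) $\Re(g+\theta)$ be constant on each main arc of the new contour (the two bands joining the branch points), and (iii) the $B$-period condition hold so that $\widetilde M$ remains single-valued, yields precisely the modulation (moment) equations \eqref{ak0system} which determine $\alpha_\re,\alpha_\im$ as functions of $x/t$. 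The sign analysis of $\Im(g+\theta)$ along carefully chosen lens contours on either side of each band then guarantees that, after the standard upper/lower triangular factorization of the jump matrix, all non-main contours contribute jumps that are $1+O(e^{-ct})$ uniformly on compact subsets of the region.

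With those small jumps removed by a first small-norm RHP step, the model (outer) problem is a matrix RHP posed on the union of the two bands, with piecewise-constant jumps $-e^{i\omega\sigma_3}$ determined by the $A$-period of the relevant differential (giving the real phase $\omega$ in \eqref{omr} and hence $X$ in \eqref{Xdef}). This outer model is solved explicitly in the standard way by a Baker--Akhiezer function on the elliptic curve, i.e.\ by ratios of Jacobi theta functions $\Theta$ with argument shifted by the Abel map of the point at infinity; the shift is encoded in $v_\infty$ from \eqref{vdefr}. Matching is completed by local Airy parametrices at the four branch points $\pm q_o,\alpha,-\bar\alpha$, which give the $O(t^{-1/2})$ contribution to the error. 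Recovering $q(x,t)$ from the coefficient of $1/k$ in the large-$k$ expansion of $\widetilde M e^{itg\sigma_3}$ and translating back via the normalization $q_-/|q_-|$ produces the ratio-of-theta formula \eqref{qsol-mew-t}, with the modulus reducing (after a classical theta identity) to the standard elliptic amplitude of focusing NLS. The case $0<x/t<4\sqrt 2 q_o$ is treated identically after exchanging the roles of the two endpoints, producing the analogous formula with $\bar q_+$.

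The main obstacle is step (ii)--(iii) of the $g$-function construction: one must show that the modulation equations admit a unique solution $\alpha(x/t)$ with $\alpha_\im>0$ in the entire window $-4\sqrt 2 q_o<x/t<0$, and simultaneously verify the correct sign of $\Im(g+\theta)$ on every deformed arc so that both the lens factorization and the MI-driven exponential growth on the original gap are tamed. This is a delicate global signature analysis on the elliptic curve that cannot be reduced to purely local stationary-phase considerations; once it is settled the remainder of the argument, including the error estimate $O(t^{-1/2})$ from the small-norm theory of RHPs, follows along by now classical Deift--Zhou lines.
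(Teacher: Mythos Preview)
Your overall strategy---replace $\theta$ by a genus-one phase via an abelian integral, solve the resulting two-band model problem in theta functions, and control the remainder by local parametrices plus small-norm theory---is exactly the route the paper takes. Several concrete geometric details in your sketch are wrong, however, and would derail the execution if taken literally.

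First, the branch points of the scattering problem lie on the \emph{imaginary} axis at $\pm iq_o$, not at $\pm q_o$; the elliptic curve is $\gamma^2=(k^2+q_o^2)(k-\alpha)(k-\bar\alpha)$, with the extra branch points a Schwarz-conjugate pair $\alpha,\bar\alpha$, not $\alpha,-\bar\alpha$. Second, you omit the auxiliary \emph{real} change-of-factorization point $k_o$, which is essential here precisely because $\theta$ has no real stationary points when $|\xi|<4\sqrt2 q_o$. In the paper, equations \eqref{ak0system} express $\alpha$ in terms of $k_o$, and $k_o$ itself is fixed by the separate integral condition \eqref{k0k0}, equivalent to $\Im h(\alpha)=0$; your ``moment conditions (i)--(iii)'' conflate these two steps. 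Third, the local parametrices are placed at $k_o$ and at $\alpha,\bar\alpha$ (the latter Airy-type, since $h(k)-h(\alpha)\sim(k-\alpha)^{3/2}$), not at the original branch points $\pm iq_o$. Finally, the paper uses \emph{two} $g$-function transformations rather than one: a $t$-dependent conjugation $M^{(3)}\mapsto M^{(3)}e^{-iGt\sigma_3}$ (your step) to kill the exponential growth, followed by a separate $t$-independent conjugation $M^{(4)}\mapsto M^{(4)}e^{ig\sigma_3}$ to reduce the jumps across both $B$ and $\tilde B$ to $k$-independent constants before the theta-function model can be written down; the constant $\omega$ in \eqref{omr} and hence $X$ in \eqref{Xdef} come from this second step, not from an $A$-period of $dh$.
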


\begin{theorem}[Elliptic representation]\label{mew-ell-t}
The modulus of the leading order asymptotic solution \eqref{qsol-mew-t} in the modulated elliptic wave region can be expressed in terms of the Jabobi elliptic function $\textnormal{sn}$ via the formula 
\begin{equation}\label{sqmod-finn-t}
\left|q_\mathrm{asymp}(x, t) \right|^2
=
\left(q_{o} +\alpha_\im\right)^2 
-
4q_{o}  \alpha_\im 
 \,
 \textnormal{sn}^2
\Big(
\sqrt{\alpha_\re^2+\left(q_{o} +\alpha_\im\right)^2}
\left(x-2\alpha_\re t\right)
-
2K(m) X, 
 m \Big),
\end{equation}
where  the real quantities $\alpha_\re$, $\alpha_\im$, $K(m)$, $m$ and $X$ are  defined as in Theorem \ref{mew-t}.
\end{theorem}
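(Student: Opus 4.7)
The plan is to reduce the modulus of \eref{qsol-mew-t} to an explicit theta-function identity, and then convert that identity into the Jacobi elliptic formula \eref{sqmod-finn-t} using classical relations between theta functions and Jacobi elliptic functions. First, I would take the modulus squared of \eref{qsol-mew-t}. The phase $e^{2i(g_\infty - G_\infty t)}$ is unimodular and drops out. Since $|q_-| = q_o$ with $q_o$ and $\alpha_\im$ real, the scalar prefactor $q_o(q_o + \alpha_\im)/\bar q_-$ has modulus $q_o + \alpha_\im$, producing the overall factor $(q_o + \alpha_\im)^2$ that appears in \eref{sqmod-finn-t}. Introducing the real variable
\begin{equation*}
U = \sqrt{\alpha_\re^2 + (q_o + \alpha_\im)^2}\,(x - 2\alpha_\re t) - 2 K(m) X,
\end{equation*}
the statement reduces to proving the single identity
\begin{equation*}
\left|\frac{\Theta(1/2)\,\Theta\bigl(U/(2K(m)) + 2v_\infty - 1/2\bigr)}{\Theta(2v_\infty - 1/2)\,\Theta\bigl(U/(2K(m)) - 1/2\bigr)}\right|^2
= 1 - \frac{4 q_o \alpha_\im}{(q_o + \alpha_\im)^2}\,\textnormal{sn}^2(U,\,m),
\end{equation*}
together with the identification $m = 4 q_o \alpha_\im /(q_o + \alpha_\im)^2$ obtained directly from the definition of the elliptic modulus in \eref{ell-mod}.

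To establish the theta-function identity, I would invoke the classical representation of the Jacobi elliptic functions as ratios of theta functions. Using the shift rule $\Theta(z + 1/2) = \theta_4(z)$ to convert $\Theta = \theta_3$ into $\theta_4$, together with the standard formulas
\begin{equation*}
\textnormal{dn}(U,m) = \frac{\theta_4(0)}{\theta_3(0)}\,\frac{\theta_3\bigl(U/(2K(m))\bigr)}{\theta_4\bigl(U/(2K(m))\bigr)},
\qquad \textnormal{dn}^2(U,m) = 1 - m\,\textnormal{sn}^2(U,m),
\end{equation*}
the right-hand side of the target identity becomes itself a squared ratio of theta functions evaluated at real argument. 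The constant factor $\Theta(1/2)/\Theta(2v_\infty - 1/2)$ in the left-hand side is then precisely what is needed to absorb the boundary quotient $\theta_4(0)/\theta_3(0)$ appearing in the dn formula, once the shift $2v_\infty - 1/2$ is interpreted correctly as a half-period translation.

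The main obstacle will be verifying that the complex shift $2v_\infty$, defined via \eref{vdefr} through the Riemann-Hilbert reconstruction, is congruent modulo the period lattice of $\Theta$ to the half-period that sends $\theta_3$ to $\theta_4$. Concretely, I would show that $2v_\infty - 1/2$ coincides with $\tau/2$ up to an integer period, so that $\Theta(z + 2v_\infty - 1/2)$ equals $\theta_4(z)$ times a quasi-periodic exponential factor; the counterpart of this factor in the normalization $\Theta(2v_\infty - 1/2)$ then cancels it out of the modulus squared, leaving a real-valued quantity equal to $\textnormal{dn}^2(U, m)$. A purely computational final check is that the elliptic modulus $m$ of \eref{ell-mod} coincides with $4 q_o \alpha_\im /(q_o + \alpha_\im)^2$, which follows from expressing the branch points of the underlying elliptic spectral curve in terms of $\alpha_\re$ and $\alpha_\im$ via \eref{ak0system}; this completes the reduction of \eref{qsol-mew-t} to the elliptic formula \eref{sqmod-finn-t}.
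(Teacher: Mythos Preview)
Your reduction to a theta-function identity and your handling of the prefactor are fine, but the core of the argument has a genuine gap. Two claims fail:

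First, the identification ``$m = 4q_o\alpha_\im/(q_o+\alpha_\im)^2$'' is incorrect. From \eqref{ell-mod} one has $m^2 = 4q_o\alpha_\im/\bigl(\alpha_\re^2 + (q_o+\alpha_\im)^2\bigr)$, which involves $\alpha_\re$. The coefficient $4q_o\alpha_\im/(q_o+\alpha_\im)^2$ that multiplies $\mathrm{sn}^2$ in \eqref{sqmod-finn-t} is \emph{not} equal to $m$ or $m^2$ except in the degenerate case $\alpha_\re=0$; consequently the left-hand side of your target identity is not $\mathrm{dn}^2(U,m)$.

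Second, and more seriously, the claim that $2v_\infty - \tfrac12$ is congruent to $\tau/2$ modulo the period lattice is false. The Abel map value $v_\infty = \int_{iq_o}^\infty dw$ depends on the location of all four branch points $\pm iq_o$, $\alpha$, $\bar\alpha$, and is not a half-period in general. All that follows from the cycle normalizations is that $2v_\infty - \tfrac12$ is purely imaginary modulo $\mathbb Z$, not that it equals $\tau/2$. If your half-period claim were true, the ratio $\theta_2^2(i\psi)/\theta_3^2(i\psi)$ with $i\psi = \pi(2v_\infty - \tfrac12)$ would evaluate to $1/m$, and the final answer would then force $4q_o\alpha_\im = (q_o+\alpha_\im)^2$, i.e.\ $\alpha_\im = q_o$, which is generically violated.

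What the paper actually does at this point is the substantive work you are trying to bypass: after reaching the intermediate form
\[
|q_\mathrm{asymp}|^2 = (q_o+\alpha_\im)^2\Bigl[1 - m\,\frac{\theta_2^2(i\psi)}{\theta_3^2(i\psi)}\,\mathrm{sn}^2\bigl(\theta_3^2(0)\phi,\,m\bigr)\Bigr],
\]
one must compute the constant $\theta_2^2(i\psi)/\theta_3^2(i\psi)$ explicitly. The paper does this by constructing meromorphic functions on the Riemann surface $\Upsigma$ with prescribed divisors (e.g.\ the function $f_{42}$ in \eqref{f42-def}), invoking Abel's theorem, and matching residues at the branch points; this yields the three ratios $\theta_j^2(y)/\theta_2^2(y)$ for $j=1,3,4$ at $y=\pi v_\infty$ in closed form (equations \eqref{f42-5}--\eqref{f12-5}), from which $\theta_2^2(i\psi)/\theta_3^2(i\psi) = \tfrac{1}{m}\cdot\tfrac{4q_o\alpha_\im}{(q_o+\alpha_\im)^2}$. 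That Riemann-surface computation is the missing ingredient in your proposal.
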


Importantly, equation
\eqref{sqmod-finn-t} rigorously establishes the validity of a similar expression formally obtained by Kamchatnov (see~\cite{Kam-book}, p.~234, equation (5.29))
using Whitham's modulation theory.
(The motion of the branch points for said solution was also formally derived in \cite{el1993} using similar methods.)
Moreover, equation \eqref{sqmod-finn-t} generalizes the results of \cite{Kam-book} and \cite{el1993}
to the case of \textit{generic} initial conditions satisfying the hypotheses of Theorems~\ref{pw-t} and~\ref{mew-t}.

Plots of $|q(x,t)|$ against $x/t$ and $t$ in the modulated elliptic wave region, 
obtained using the elliptic representation \eqref{sqmod-finn-t} are provided in Figures \ref{qmod_3d_plot} and \ref{qmod_plot}.

This paper is organized as follows.
The IST for the initial value problem for the focusing NLS equation~\eqref{e:NLS} is formulated in Section~\ref{ist-sec}. 
An outline of the derivation of the long-time asymptotics of this problem is then provided in Section \ref{outline-sec}, where it is shown that the $xt$-plane needs to be decomposed into two regions, namely a plane wave region and a modulated elliptic wave region.
Theorems \ref{pw-t} and \ref{mew-t} are subsequently established in Sections \ref{pw-sec} and \ref{mew-sec}, respectively, while Theorem \ref{mew-ell-t} is proved in Section \ref{ell-sec}.
Concludings remarks and future directions are discussed in Section \ref{conc-sec}.
Finally, a rigorous estimation of the leading order error is provided in the Appendix.

\begin{figure}[t!]
\begin{center}
\hskip -2.3cm
\begin{subfigure}{0.4\textwidth}
\includegraphics[height=4cm, width=6.8cm]{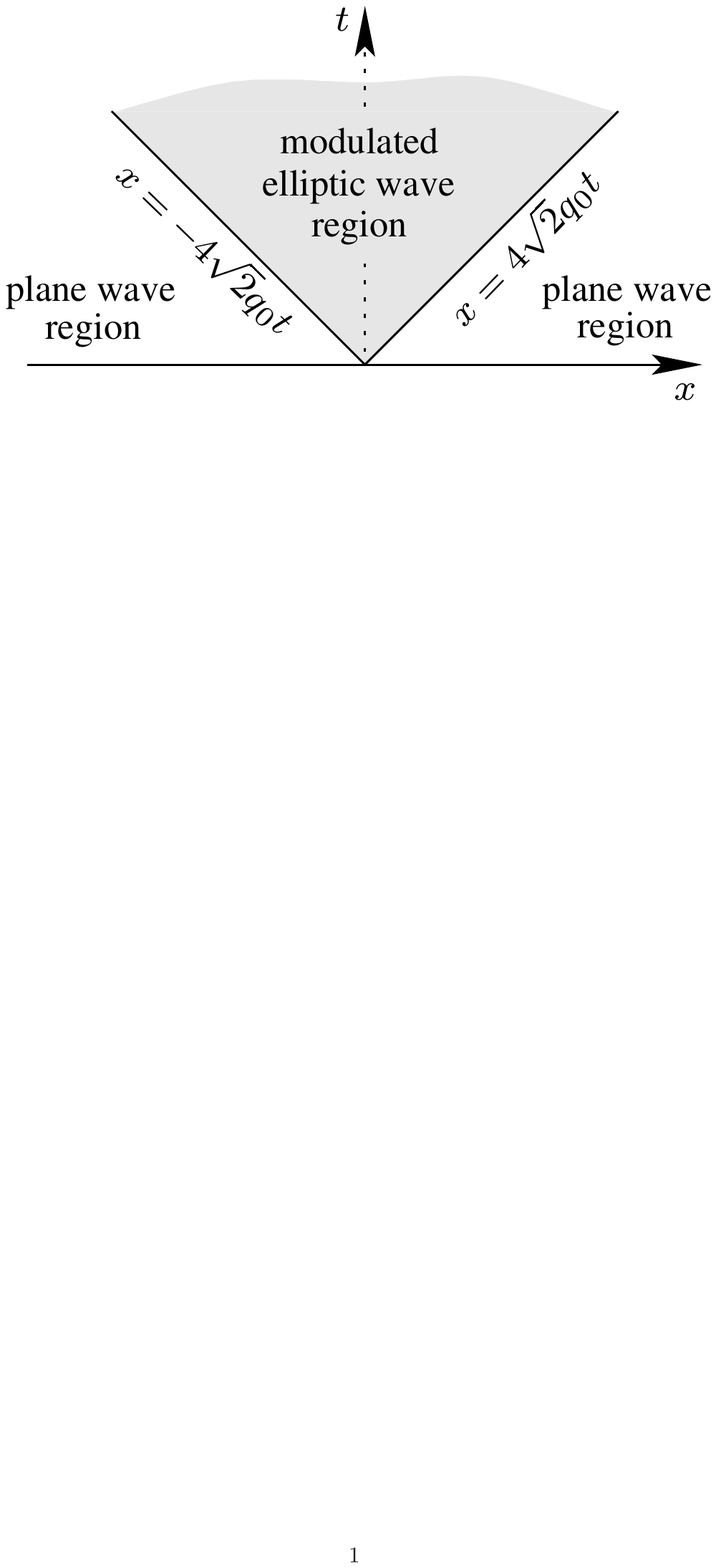}
\end{subfigure}
\hskip .7cm
\begin{subfigure}{0.4\textwidth}
\includegraphics[height=5.2cm, width=7.9cm]{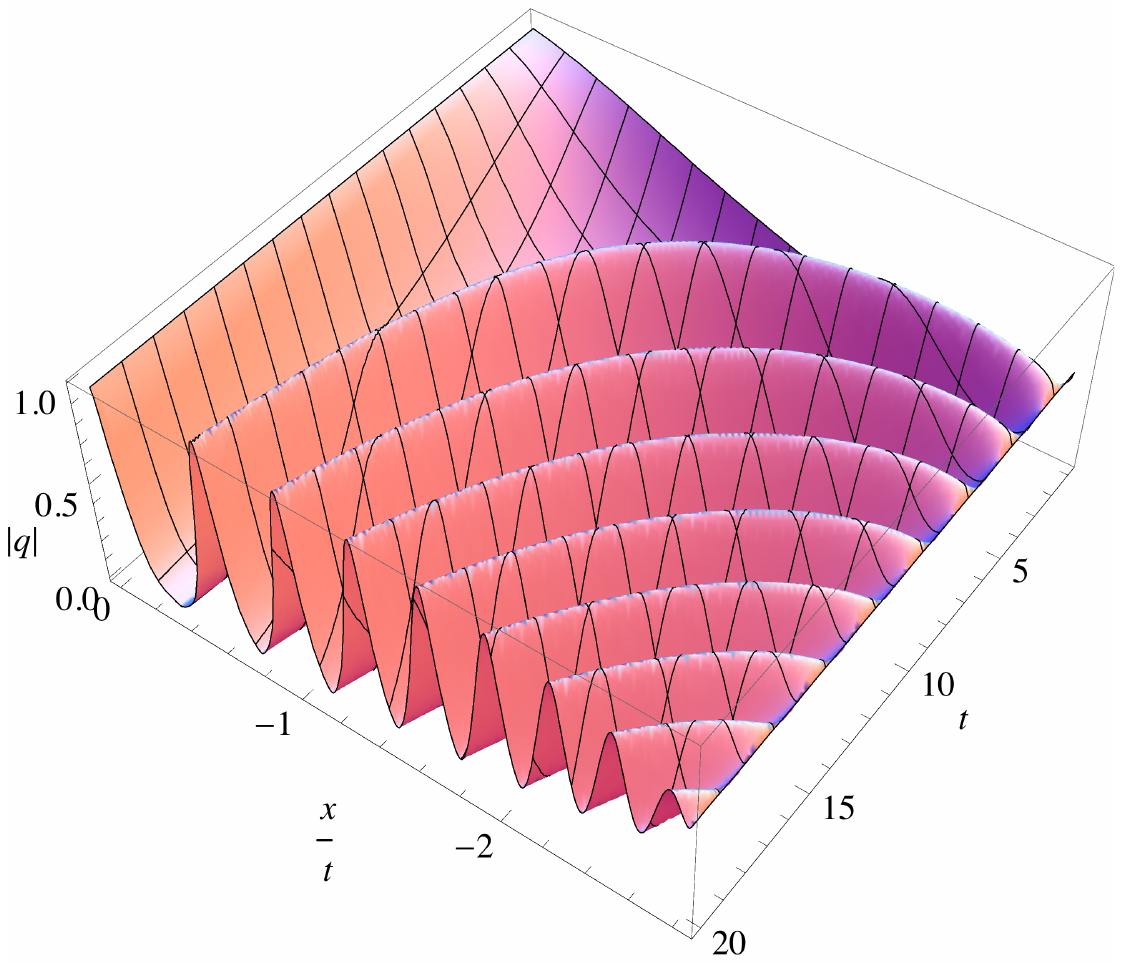}
\end{subfigure}
\caption{
Left: The plane wave and modulated elliptic wave regions of the $xt$-plane.
Right: The leading order modulus $|q(x,t)|$ in the modulated elliptic wave region,
as given by equation~\eqref{sqmod-finn-t}, for $q_{o} =\frac 12$ and  
$(x/t, t)\in \left[-2\sqrt 2, 0\right]\times \left[1, 20\right]$.}
\label{qmod_3d_plot}
\end{center}
\vglue1ex
\begin{center}
\hskip -2.5cm
\begin{subfigure}{0.3\textwidth}
\includegraphics[height=4.2cm, width=6.9cm]{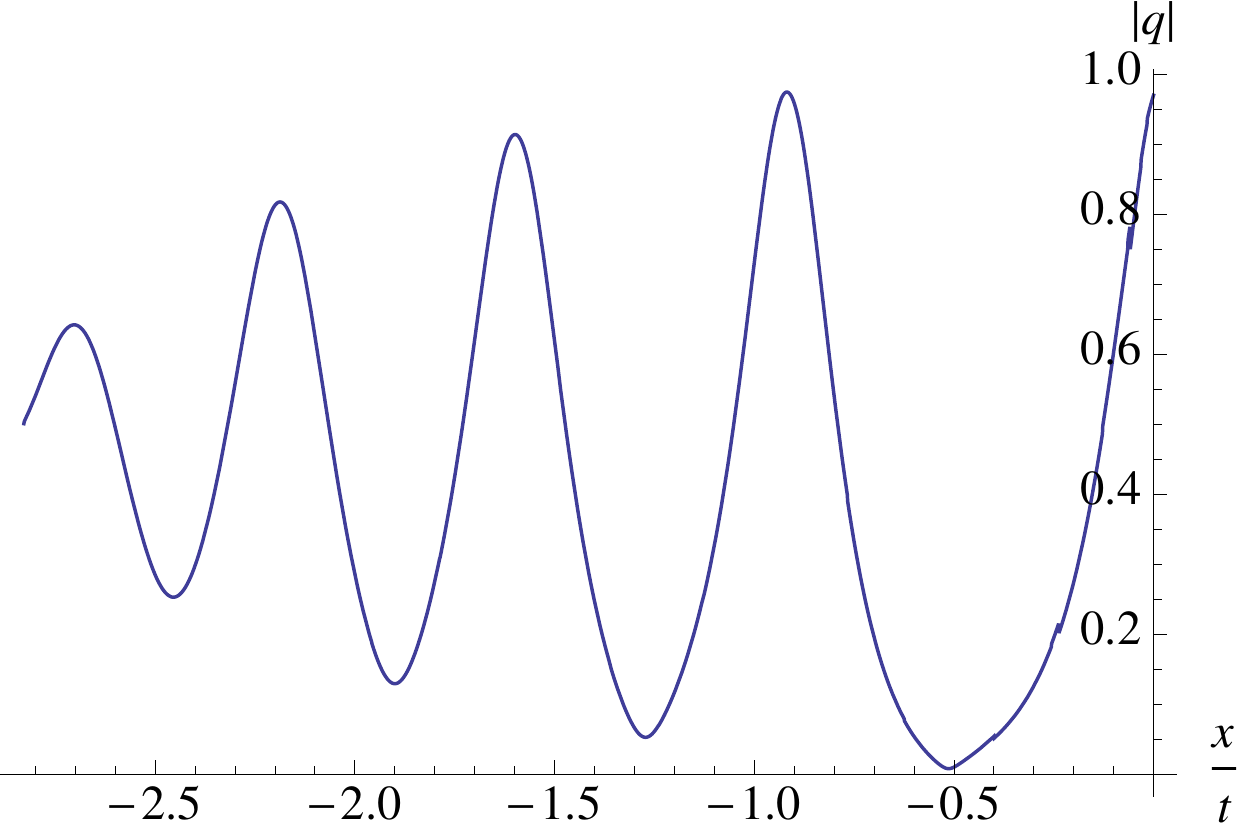}
\end{subfigure}
\hskip 3cm
\begin{subfigure}{0.3\textwidth}
\includegraphics[height=4.2cm, width=6.9cm]{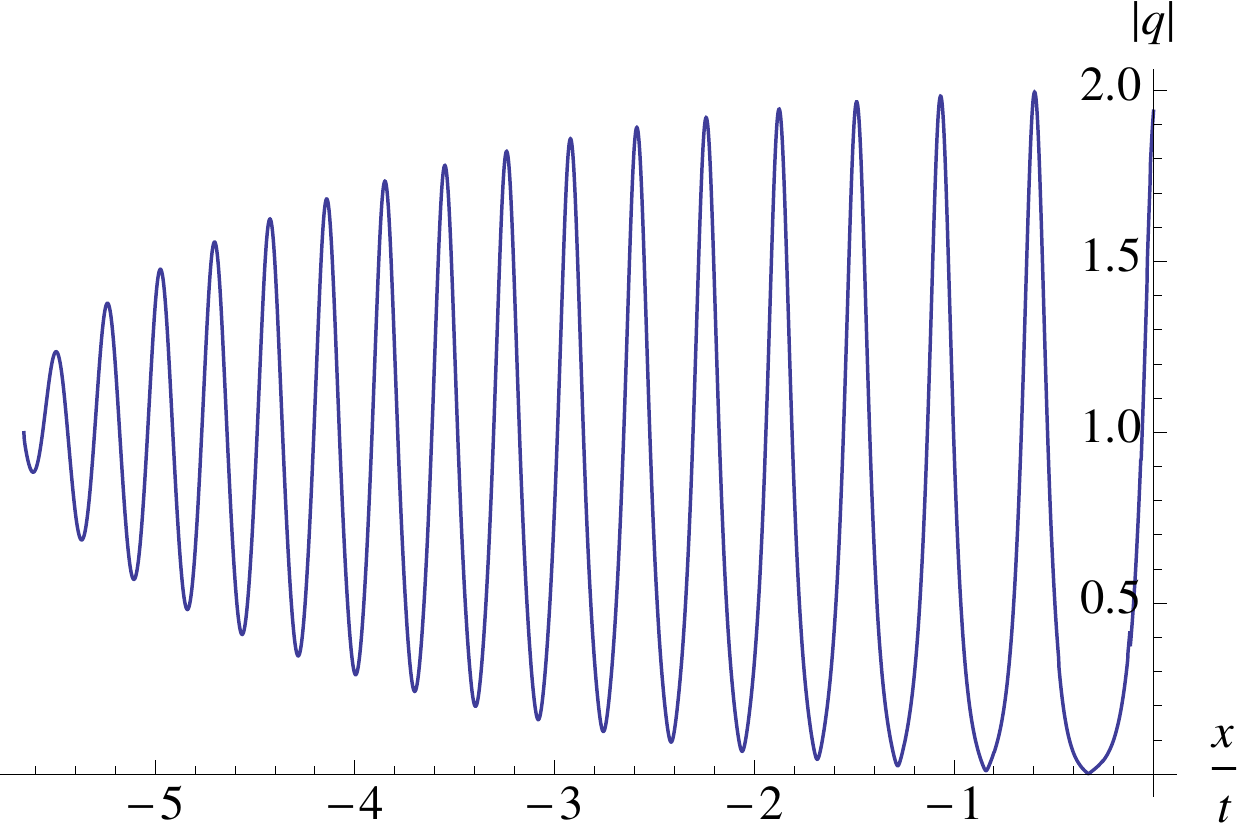}
\end{subfigure}
\caption{The leading order modulus $|q(x,t)|$ in the modulated elliptic wave region, 
as given by equation~\eqref{sqmod-finn-t}, as a function of $ x/t \in [-4\sqrt 2 q_{o}, 0]$.
Left: For $q_{o} =\frac12$ and $t=10$.
Right: For $q_{o} =1$ and $t=10$.}
\label{qmod_plot}
\end{center}
\end{figure}
%

%
\section{Inverse scattering transform with nonzero  boundary conditions}
\label{ist-sec}

In this section we formulate the IST for the initial value probelm for the NLS equation~\eref{e:NLS}
with nonzero  boundary conditions~\eref{e:NLS}, 
which we then use in the subsequent sections to compute the long-time asymptotic behavior of the solutions.
In \cite{bk2014}, the IST relied on introducing a two-sheeted Riemann surface and a suitable uniformization variable,
following the approach of~\cite{ft1987}.
Here, instead, we work directly in the spectral plane, which is more advantageous for our purposes. 

The focusing NLS equation is typically written in the form
\begin{equation}
i u_t + u_{xx} + 2|u|^2u=0\,,
\label{e:NLSu}
\end{equation}
in which case the corresponding nonzero  boundary conditions are
\begin{equation}
\lim_{x\to \pm \infty} u(x, t)
= q_\pm e^{2iq_{o} ^2t}.
\end{equation}
Equation~\eref{e:NLSu} is trivially converted into~\eref{e:NLS} via the transformation
\begin{equation}
u(x,t) = q(x, t) \,e^{2iq_{o} ^2t}\,.
\label{e:NZBCu}
\end{equation}
On the other hand, the advantage of equation~\eqref{e:NLS} over equation~\eqref{e:NLSu} for our purposes
is that, for the former case, the boundary conditions~\eref{e:NZBC} at infinity are constant, 
while in the case of~\eref{e:NZBCu} they depend on $t$.
That is, \eref{e:NLS}, \eref{e:NZBC} and~\eref{e:q0L1} imply that $\lim_{x\to\pm\infty}q(x,t) = q_\pm$ for all~$t>0$.

Recall that equation~\eqref{e:NLS} admits the Lax pair representation
\begin{subequations}\label{lp}
\begin{align}
\Psi_x &= X\Psi, 
\hskip .5cm
X=ik\sigma_3+Q, \quad k\in\mathbb C,
\label{lpx}
\\
\Psi_t &= T\Psi, 
\hskip .64cm
T = -2ik^2\sigma_3 + i\sigma_3\left(Q_x-Q^2-q_{o} ^2 I \right)-2kQ
\label{lpt}
\end{align}
\end{subequations}
(namely, equation~\eref{e:NLS} is the compatibility condition $X_t - T_x + [X,T]=0$ of equation~\eref{lp}),
where $\Psi(x, t, k)$ is a $2\times 2$ matrix-valued function and
\begin{equation}
\sigma_3
=
\begin{pmatrix}
1 &0
\\
0 &-1
\end{pmatrix},
\quad
Q(x,t) 
=
\begin{pmatrix}
0 &q(x,t)
\\
-\bar q(x, t) &0
\end{pmatrix},
\end{equation}
with the overbar denoting complex conjugation as usual.

\paragraph{Direct problem.}
Let $X_\pm = \lim_{x\to\pm\infty}X(x,t,k)$
and $T_\pm = \lim_{x\to\pm\infty}T(x,t,k)$.
It is straightforward to see that the eigenvector matrix of $X_\pm$ can be written as
\begin{equation}\label{epm}
E_\pm(k) 
=
\begin{pmatrix}
1 & \frac{i(\lambda - k)}{\bar q_\pm}
\\
\frac{i(\lambda -k)}{q_\pm} & 1
\end{pmatrix},
\end{equation}
while the associated eigenvalues $\pm i\lambda$ are defined by the complex square root
\begin{equation}
\label{ldef}
\lambda(k) = \left(k^2+q_{o} ^2\right)^{\frac 12}\,.
\end{equation}
Specifically,  introducing the branch cut  
\begin{equation}
B = i\left[-q_{o}, q_{o} \right],
\end{equation}
we take $\lambda(k)$ to be the unique, single-valued function defined by equation~\eref{ldef}
with a jump discontinuity across $B$,
identified with its limiting value from the right along $B$ so that 
\begin{equation}\label{bound}
\lambda(k) 
= 
\begin{cases}
\sqrt{k^2+q_{o} ^2}, 
&k\in\mathbb R^+\cup B,
\\
 -\sqrt{k^2+q_{o} ^2},
&k\in\mathbb R^-,
\end{cases}
\end{equation} 
 where the square root sign denotes the principal branch of the real square root.

Motivated by the above remarks, and observing that $T_\pm = -2k X_\pm$,
we seek simultaneous solutions $\Psi_\pm$ of the Lax pair \eqref{lp} such that
\begin{equation}\label{infcond}
\Psi_\pm(x, t, k)
=
E_\pm(k)\,e^{i\vartheta(x, t, k) \sigma_3}(1+o(1)),
\quad
x\to \pm\infty,
\end{equation}
where we define
\begin{equation}\label{thetv}
\vartheta(x, t, k)
=
\lambda \left(x-2kt\right).
\end{equation}
It is straightforward to see from equations \eqref{bound} and \eqref{thetv} that the exponentials in the normalization \eqref{infcond} of $\Psi_\pm$ remain bounded as $x\to\pm\infty$ provided that $k\in\Sigma$, with 
$\Sigma= \mathbb R\cup B$.
It is convenient to also introduce the matrix-valued functions $\mu_\pm$  defined by
\begin{equation}\label{mupsi}
\mu_\pm(x, t, k) 
=
 \Psi_\pm(x, t, k) e^{-i \vartheta(x, t, k) \sigma_3}.
\end{equation}
The normalization \eqref{infcond} of $\Psi_\pm$ implies 
\begin{equation}\label{minf}
\mu_\pm(x, t, k) = E_\pm(k)+o(1), \quad x\to \pm \infty, \quad k \in \Sigma .
\end{equation}
Using standard methods, we then obtain linear integral equations of Volterra type for $\mu_\pm$:
\begin{subequations}\label{inteqmu}
\begin{align}
\mu_{-}(x, t, k)
&=
E_-(k)
+
\int_{-\infty}^x 
E_-(k)e^{i\lambda (x - y) \sigma_3}E_-^{-1}(k) \Delta Q_-(y, t) \mu_{-} (y, t, k) e^{-i\lambda  (x-y) \sigma_3} dy,
\\
\mu_{+}(x, t, k)
&=
E_+(k)
-
\int_x^{\infty}
E_+(k)e^{i\lambda (x-y) \sigma_3}E_+^{-1}(k) \Delta Q_+(y, t) 
\mu_{+} (y, t, k) e^{-i\lambda  (x-y) \sigma_3} dy,
\end{align}
\end{subequations}
where 
$\Delta Q_\pm(x, t) = Q(x, t) - Q_\pm$
and $Q_\pm = \lim_{x\to\pm\infty}Q(x,t)$.

The analysis of the Neumann series for the integral equations~\eqref{inteqmu} (see \cite{bk2014} for more details)
allows one to prove existence and uniqueness of the eigenfunctions $\mu_\pm$ for all $k\in\Sigma$ 
provided that $(q(x,t) - q_\pm)\in L^1(\mathbb R_x^\pm)$.
Moreover, denoting by $\mu_{\pm 1}$ and $\mu_{\pm 2}$ the first and second column of $\mu_\pm$ respectively,  
one can conclude that $\mu_{+1}$ and $\mu_{-2}$ 
(and, respectively, $\mu_{-1}$ and $\mu_{+2}$)
can be analytically continued as functions of $k$ in 
$\mathbb C^+\setminus B^+$
(and, respectively, $\mathbb C^-\setminus B^-$), where 
\begin{equation}
\mathbb C^\pm= \left\{ k\in\mathbb C:\Im(k)\gl0\right\},
 \quad
 B^\pm = B\cap \mathbb C^\pm.
\end{equation}  
Consequently, definition \eqref{mupsi} implies that $\Psi_{-1}$ and $\Psi_{+2}$ are analytic for $k\in\mathbb C^-\setminus B^-$, while $\Psi_{+1}$ and $\Psi_{-2}$ are analytic for $k\in\mathbb C^+\setminus B^+$.

\paragraph{Scattering matrix.}
Since $X$ and $T$ are traceless, by Abel's theorem the determinants of $\Psi_\pm$ are independent of $x$ and $t$. 
Thus, using the asymptotic conditions \eqref{infcond} we have
\begin{equation}
\det \Psi_\pm (x, t, k)
= 
\lim_{x\to \pm \infty} \det \Psi_\pm (x, t, k)
=
d(k),
\end{equation}
where
\begin{equation}\label{ddef}
d(k)= \frac{2\lambda}{\lambda+k}.
\end{equation}
The definition \eqref{ldef} of $\lambda$ implies that $d$ is nonzero  and non-singular for all $k\in\Sigma^*$, where
\begin{equation}
\Sigma^*= \Sigma \setminus \left\{ \pm iq_{o} \right\}\,.
\end{equation}
Therefore, 
the matrices $\Psi_\pm$  are fundamental matrix solutions of both parts of the system \eqref{lp}
for all $k\in\Sigma^*$.
It then follows that there exists a matrix $S(k)$ independent of $x$ and $t$, hereafter called the scattering matrix, such that 
\begin{equation}\label{sdef}
\Psi_-(x, t, k)
=
\Psi_+(x, t, k) S(k), \quad k\in\Sigma^*.
\end{equation}
Note that $S$ is unimodular as
\begin{equation}\label{sdet}
\det S(k)
=
\frac{\det \Psi_-(x, t, k)}{\det \Psi_+(x, t, k)}
\equiv 1.
\end{equation}
Moreover, 
we will see that the diagonal entries of $S(k)$ can be analytically continued off $\Sigma^*$.

\paragraph{Symmetries.}
The identity
\begin{equation}\label{ids}
\overline{X(q, \bar q, \bar k)}
=
-\sigma_*
X(q, \bar q, k)
\sigma_*,
\quad
\sigma_*= 
\begin{pmatrix}
0 &1
\\
-1 & 0
\end{pmatrix},
\end{equation}
implies 
\begin{equation}\label{sym2b}
\overline{ \Psi_\pm(x, t, \bar k)}
=
-\sigma_* \Psi_\pm(x, t, k) \sigma_*,\quad k\in\Sigma^*,
\end{equation}
which in turns yields the symmetry condition
\begin{equation}
\overline{S(\bar k)}
=
-\sigma_* S(k) \sigma_*, \quad k\in\Sigma^*.\label{syms}
\end{equation}
In fact, letting $s_{11}(k)=a(k)$ and $s_{21}(k)=b(k)$ the scattering matrix $S$ takes the form
\begin{equation}\label{sform}
S(k)
=
\begin{pmatrix}
a(k) & -\bar b(k)
\\
b(k) & \bar a(k)
\end{pmatrix}, \quad k\in\Sigma^*,
\end{equation}
where $\bar a(k)=\overline{a(\bar k)}$ and $\bar b(k)=\overline{b(\bar k)}$ denote the Schwartz conjugates. 
The determinant condition \eqref{sdet} then becomes
\begin{equation}\label{abdetgen}
a(k)\bar a(k)+b(k)\bar b(k)=1, \quad k\in\Sigma^*.
\end{equation}
Finally, equations \eqref{sdef} and \eqref{sform} imply
\begin{subequations}\label{abdef}
\begin{align}
a(k)
&=
(1/d(k))\,{\wr\left(\Psi_{-1}(x, t, k), \Psi_{+2}(x, t, k)\right)}, \quad k\in\Sigma^*,
\label{adef}
\\
\bar a(k)
&=
(1/d(k))\,{\wr\left( \Psi_{+1}(x, t, k), \Psi_{-2}(x, t, k) \right)},
\quad k\in\Sigma^*,\label{abardef}
\\
b(k)
&=
(1/d(k))\,{\wr\left(\Psi_{+1}(x, t, k), \Psi_{-1}(x, t, k)\right)}, \quad k\in\Sigma^*,\label{bdef}
\\
\bar b(k)
&=
(1/d(k))\,{\wr\left(\Psi_{+2}(x, t, k), \Psi_{-2}(x, t, k)\right)}, \quad k\in\Sigma^*,\label{bbardef}
\end{align}
\end{subequations}
where $\wr$ denotes the Wronskian determinant.
Thus,
recalling that $d(k)$ in equation \eqref{ddef} is analytic for $k\notin B$,  
we infer that the function $a(k)$ is analytic in $\mathbb C^-\setminus B^-$.  Similarly, 
the Schwartz conjugate $\bar a(k)$ is analytic in $\mathbb C^+\setminus B^+$.
On the other hand, the function $b(k)$ cannot be analytically continued away from $\Sigma^*$ in general.

\paragraph{Jumps of the eigenfunctions and scattering data across the branch cut.}
The jump discontinuity of $\lambda$ across the branch cut $B$ induces a corresponding jump for the eigenfunctions and scattering data. (This is one of the points in which the present formulation of the IST differs most significantly from that in~\cite{bk2014}.) 
Since $\lambda$ has been taken to be continuous from the right of $B$, the same is true for the analytic eigenfunctions and scattering data.
That is, taking $B$ to be oriented upwards,
\begin{gather*}
\mu_{-1}^-(k) 
= 
\lim_{\varepsilon\to 0^+}
 \mu_{-1}(k+\varepsilon) = \mu_{-1}(k),
\quad
\mu_{+2}^-(k) 
= 
\lim_{\varepsilon\to 0^+}
 \mu_{+2}(k+\varepsilon) = \mu_{+2}(k),
\quad k\in B^-,
\end{gather*}
with similar relations for the eigenfunctions analytic in $\Complex^+\setminus B^+$, together with
\begin{gather*}
a^-(k) 
= 
\lim_{\varepsilon\to 0^+}
a(k+\varepsilon) = a(k), 
\quad k \in  B^-. \nonumber
\end{gather*}
The jumps of $\mu_{\pm}$ across $B$ are then expressed by the following lemma.
\begin{lemma}\label{bcm-lem}
The eigenfunctions defined via the integral equations \eqref{inteqmu} satisfy the following jump conditions across the branch cut $B\!:$
\begin{subequations}\label{bcm}
\begin{gather}	
\mu_{-1}^+(x, t, k)
=
\frac{\lambda+k}{iq_-}\,
\mu_{-2}(x, t, k), 
\quad 
\label{bcm-1}
\mu_{+2}^+(x, t, k)
=
\frac{\lambda+k}{i\bar q_+}\,
\mu_{+1}(x, t, k), \quad k\in  B^-,
\\ 
\mu_{-2}^+(x, t, k)
=
\frac{\lambda+k}{i\bar q_-}\,
\mu_{-1}(x, t, k), 
\quad 
\mu_{+1}^+(x, t, k)
=
\frac{\lambda+k}{iq_+}\,
\mu_{+2}(x, t, k), \quad k\in B^+.\label{bcm+1}
\end{gather}
\end{subequations}
\end{lemma}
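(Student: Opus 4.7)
The plan is to derive each of the four relations in~\eqref{bcm} from two ingredients: (i) an algebraic identity describing how the asymptotic matrices $E_\pm$ transform under $\lambda \mapsto -\lambda$, and (ii) uniqueness of the Jost eigenfunctions with prescribed asymptotic behavior. Since $\lambda$ is defined by $\lambda^2 = k^2 + q_o^2$ with a single branch cut along $B$, its two boundary values on $B$ are negatives of one another; with the convention in~\eqref{bound}, if $\lambda(k)$ denotes the right-boundary value (the chosen branch), then the left-boundary value is $-\lambda(k)$ for $k \in B$. Crossing $B$ therefore amounts to the substitution $\lambda \to -\lambda$ in every quantity built out of $\lambda$ alone.

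The core algebraic observation is then as follows. Using the key identity $(\lambda - k)(\lambda + k) = \lambda^2 - k^2 = q_o^2 = q_\pm \bar q_\pm$, one checks by direct computation that the first column of $E_-$ evaluated at $-\lambda$ equals $\frac{\lambda + k}{iq_-}$ times the second column of $E_-$ evaluated at $\lambda$, and analogous statements hold for the three other pairings of columns of $E_\pm$ under $\lambda \mapsto -\lambda$. Precisely these yield the four prefactors $\frac{\lambda+k}{iq_-}$, $\frac{\lambda+k}{i\bar q_+}$, $\frac{\lambda+k}{i\bar q_-}$, $\frac{\lambda+k}{iq_+}$ that appear on the right-hand sides of~\eqref{bcm-1}--\eqref{bcm+1}.

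Next I would combine this with the normalization~\eqref{minf} to see that, for example when $k \in B^-$, the Jost function $\mu_{-1}^+(x,t,k)$, defined by~\eqref{inteqmu} with $\lambda$ replaced by $-\lambda$, satisfies $\mu_{-1}^+(x,t,k) = \frac{\lambda+k}{iq_-}\,\mu_{-2}(x,t,k) + o(1)$ as $x \to -\infty$, and analogously for the three remaining relations (the ones involving $\mu_{+2}^+$, $\mu_{+1}^+$, $\mu_{-2}^+$, matched at $x \to +\infty$ where appropriate). To promote these asymptotic equalities to identities valid for all $x \in \mathbb R$, observe that both sides of each proposed relation satisfy the same linear ODE $\Psi_x = X\Psi$, since $X = ik\sigma_3 + Q$ does not involve $\lambda$, and both sides have the same leading behavior at the appropriate endpoint. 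Uniqueness of the solution of the Volterra equation with given asymptotic value (equivalently, uniqueness of the Jost mode picking out the prescribed column of $E_\pm$ at infinity) then forces equality throughout $\mathbb R$.

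The main obstacle is essentially bookkeeping: tracking which side of $B^\pm$ each boundary value lives on, which column of $E_\pm$ enters, and whether $q_\pm$ or $\bar q_\pm$ appears in each of the four cases. The substantive content reduces to the single identity $(\lambda - k)(\lambda + k) = q_o^2$ together with the standard Volterra uniqueness argument already invoked to establish existence and uniqueness of the eigenfunctions $\mu_\pm$.
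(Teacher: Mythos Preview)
Your proposal is correct and follows essentially the same approach as the paper. The paper writes $\Psi_{-1}^+ = c_{-1}\Psi_{-1} + \tilde c_{-1}\Psi_{-2}$ in the fundamental system $\{\Psi_{-1},\Psi_{-2}\}$ (which exists since $X$ does not involve $\lambda$) and reads off the coefficients by sending $x\to-\infty$; this limiting step is exactly your algebraic identity on the columns of $E_\pm$ under $\lambda\mapsto-\lambda$ combined with uniqueness of the Jost solution picking out a prescribed column of $E_\pm$, so the two framings are equivalent.
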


\begin{proof}
Since the  pairs $\{ \Psi_{+1}, \Psi_{+2}\}$ and $\{ \Psi_{-1}, \Psi_{-2}\}$ are both fundamental sets of solutions of equation \eqref{lpx}, 
every other solution of this equation can be expressed as a linear combination of either pair. 
In particular, since the limit $\Psi_{-1}^+$ of $\Psi_{-1}$ as $k$ approaches $B^-$ from the left is also a solution of equation \eqref{lpx}, we must have
\begin{equation*}
\Psi_{-1}^+(x, t, k)
=
c_{-1}(k, t) \Psi_{-1}(x, t, k)+\tilde c_{-1}(k, t) \Psi_{-2}(x, t, k),
\quad
k\in B^-,
\end{equation*}
for some functions $c_{-1}$ and $\tilde c_{-1}$ are independent of $x$ and yet to be determined. 
Then, separating the columns of equations~\eqref{infcond} 
and taking the limit  of the above equation as $x\to -\infty$  we find that $c_{-1}(k, t)\equiv 0$ and
$\tilde c_{-1}(k,t) =
{\lambda+k}/({iq_-})$.
Definition \eqref{mupsi} then
yields  the jump condition \eqref{bcm-1}. 
The remaining jump conditions are obtained in a similar fashion.
\end{proof}


\begin{corollary}\label{acut-cor}
The function $a$ defined by  \eqref{adef} satisfies the following  condition across $B^+\!:$
\begin{equation}
\bar a^+(k)
=
({q_-}/{q_+})\, a(k), \quad k\in B^+.
\label{acut}
\end{equation}
\end{corollary}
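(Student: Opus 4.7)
The plan is to obtain $\bar a^+$ on $B^+$ by taking the appropriate limit of the Wronskian representation \eqref{abardef}, then rewriting the limiting eigenfunctions via Lemma \ref{bcm-lem} so as to recognize the Wronskian \eqref{adef} that defines $a$. Concretely, I would begin with
\[
\bar a^+(k) = \frac{1}{d^+(k)}\,\wr\bigl(\Psi_{+1}^+(x,t,k),\,\Psi_{-2}^+(x,t,k)\bigr),\qquad k\in B^+.
\]

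Passing to the modified eigenfunctions via \eqref{mupsi} and noting that $\lambda^+=-\lambda$ on $B$, hence $\vartheta^+=-\vartheta$, one has $\Psi_{+1}^+=\mu_{+1}^+\,e^{-i\vartheta}$ and $\Psi_{-2}^+=\mu_{-2}^+\,e^{i\vartheta}$. Substituting the jumps \eqref{bcm+1}, namely $\mu_{+1}^+=\frac{\lambda+k}{iq_+}\,\mu_{+2}$ and $\mu_{-2}^+=\frac{\lambda+k}{i\bar q_-}\,\mu_{-1}$, the auxiliary exponentials recombine to give the clean relations
\[
\Psi_{+1}^+ = \frac{\lambda+k}{iq_+}\,\Psi_{+2},\qquad \Psi_{-2}^+ = \frac{\lambda+k}{i\bar q_-}\,\Psi_{-1}.
\]
Plugging these back into the Wronskian and invoking bilinearity together with $\wr(\Psi_{+2},\Psi_{-1})=-\wr(\Psi_{-1},\Psi_{+2})=-d(k)\,a(k)$ from \eqref{adef}, I obtain
\[
\wr\bigl(\Psi_{+1}^+,\,\Psi_{-2}^+\bigr) = \frac{(\lambda+k)^2}{q_+\bar q_-}\,d(k)\,a(k).
\]

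To conclude, I would use that $d^+(k)=2\lambda^+/(\lambda^++k)=2\lambda/(\lambda-k)$, so $d(k)/d^+(k)=(\lambda-k)/(\lambda+k)$, which combined with the factor $(\lambda+k)^2$ yields $(\lambda+k)(\lambda-k)=\lambda^2-k^2=q_o^2$. Therefore $\bar a^+(k)=q_o^2\,a(k)/(q_+\bar q_-)$, and the constraint $|q_-|=q_o$ (i.e.\ $\bar q_-=q_o^2/q_-$) reduces the prefactor to $q_-/q_+$, establishing \eqref{acut}. The computation is essentially bookkeeping; the only care required lies in tracking the sign flip of $\lambda$ (and hence of $\vartheta$ and $d$) across the cut and in selecting the correct pair of jumps, namely \eqref{bcm+1} rather than \eqref{bcm-1}, since here $k\in B^+$.
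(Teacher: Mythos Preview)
Your proof is correct and follows exactly the approach implicit in the paper, which states the result as an immediate corollary of Lemma~\ref{bcm-lem} without spelling out the details. The bookkeeping with $\lambda^+=-\lambda$, the corresponding flips in $\vartheta$ and $d$, and the selection of the jump relations \eqref{bcm+1} on $B^+$ are all handled properly.
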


\paragraph{Inverse problem.}
Exploiting the analyticity properties of $\mu_{\pm 1,2}$ in the spectral plane, we proceed to define 
the sectionally analytic matrix-valued function
\begin{equation}\label{mrh}
M(x, t, k)
=
\begin{cases}
\left( \dfrac{\mu_{+1}}{\bar a d}, \mu_{-2} \right)
=
\left( \dfrac{\Psi_{+1}}{\bar a d}, 
 \Psi_{-2} \right)  e^{-i \vartheta \sigma_3}, &k\in\mathbb C^+\setminus B^+,
\\
\left( \mu_{-1}, \dfrac{\mu_{+2}}{a d} \right)
=
\left(  \Psi_{-1}, \dfrac{\Psi_{+2}}{ad}  \right) 
e^{-i \vartheta \sigma_3}, &k\in\mathbb C^-\setminus B^-,
\end{cases}
\end{equation}
where the dependence on the variables $x, t, k$ on the right-hand side has been suppressed for brevity.
Note the presence of $d$ in  definition \eqref{mrh} (as compared to \cite{bk2014} and other standard formulations of the inverse problem),
which implies that $M$ is unimodular, i.e.,
\begin{equation}\label{detM}
\det M(x, t, k) \equiv 1, \quad k\in\mathbb C\setminus B.
\end{equation}
As usual, the inverse problem is formulated in terms of an appropriate Riemann-Hilbert problem.
To define this Riemann-Hilbert problem, one needs appropriate jump condition for $M$.
We first compute the jump of $M$ across $k\in\mathbb R$.
Denoting by $M^{\pm}$ the limits of $M$ as $\Im(k)\to 0^\pm$, recalling that $\vartheta$, $a$ and $d$ are continuous across $\mathbb R$, 
and using the scattering relation~\eqref{sdef} and the determinant condition \eqref{abdetgen}, we obtain the jump condition
\begin{equation*}
M^+(x, t, k)
 =
M^-(x, t,  k) 
\begin{pmatrix}
\dfrac{1}{d(k)}\left[1+r(k)\bar r(k)\right]
&
\bar r(k)e^{2i\vartheta(x, t, k)} 
\\
r(k) e^{-2i\vartheta(x, t, k)} 
&
d(k)
\end{pmatrix}, \quad k \in \mathbb R,
\end{equation*}
where the real $k$-axis is oriented from left to right as usual, 
$\vartheta$ is defined by equation~\eqref{thetv} 
and the reflection coefficient $r(k)$ with Schwartz conjugate $\bar r(k)$ is defined by
\begin{equation}\label{refdef}
r(k) = - {b(k)}/{\bar a(k)}.
\end{equation}

Next, we note that the jump of $M$ across the branch cut $B$  
is also affected by the discontinuities of
$\mu_{-1}$ and $\mu_{+2}$ across $B^-$ and 
of $\mu_{+1}$ and $\mu_{-2}$ across $B^+$.
Hence, starting from the definition \eqref{mrh} of $M$ and employing Lemma \ref{bcm-lem} and Corollary \ref{acut-cor} in combination with straightforward algebraic computations, we obtain the jump conditions
\begin{align*}
M^+(x, t,  k)
&=
M^-(x, t, k) 
\begin{pmatrix}
- \dfrac{\lambda-k}{iq_-}\,  \bar r(k)  e^{2i\vartheta(x, t, k)}
&
 \dfrac{2\lambda}{i\bar q_-} 
\\
\dfrac{\bar q_-}{2i \lambda}  \left[1+r(k) \bar r(k) \right] 
 &
 -\dfrac{\lambda+k}{i\bar q_-} \, r(k)   e^{-2i\vartheta(x, t, k)}
\end{pmatrix}, 
&&k\in B^+,
\\
M^+(x, t, k)
&=
M^-(x,  t, k)  
\begin{pmatrix}
\dfrac{\lambda+k}{iq_-}\, \bar r(k)  e^{2i\vartheta(x, t, k)}
& 
\dfrac{q_-}{2i\lambda}\left[1+r(k)\bar r(k)\right]
\\
\dfrac{2\lambda}{iq_-} 
&
\dfrac{\lambda-k}{i\bar q_-}\, r(k)  e^{-2i\vartheta(x, t, k)}
\end{pmatrix}, 
&&k\in B^-,
\end{align*}
with $B$ oriented upwards as before,
and with $\vartheta$ and $r$ as above.

Finally, to complete the formulation of the RHP one must specify a normalization condition and,
if a discrete spectrum is present, appropriate residue conditions.
The latter will not be necessary in our case, since in what follows we will assume that no discrete spectrum is present. 
Using 
the integral equations~\eqref{inteqmu} for $\mu_\pm$ and the relationship \eqref{mrh} between $M$ and $\mu_\pm$,
it can be shown (see \cite{bk2014}) that the function $M$ admits the large-$k$ asymptotic expansion
\begin{equation}\label{masy}
M(x, t, k) = I + \frac{M_1(x, t)}{k}+O \Big(\frac{1}{k^2}\Big),\quad k \to \infty.
\end{equation}
Combining equations \eqref{lpx}, \eqref{mrh} and \eqref{masy}, one can also recover 
the solution  of the focusing NLS equation \eqref{e:NLS} in the form
\begin{equation}\label{qsol}
q(x, t)
=
-2i \left(M_1(x, t)\right)_{12}.
\end{equation}
Based on the above discussion, the function $M$ satisfies the following Riemann-Hilbert problem:
\begin{rhp}\label{rhp0}
Suppose that $a(k)\neq 0$ for all $k\in\mathbb C^-\cup \Sigma$ so that no discrete spectrum is present. 
Determine a sectionally analytic matrix-valued function $M(k)=M(x, t, k)$ 
in $\mathbb C\setminus \Sigma$ satisfying the jump conditions
\begin{subequations}\label{rhp00}
\begin{align}
&M^+( k) = M^-(k) V_1,\quad k\in \mathbb R,
\\
&M^+(k) = M^-( k)V_2,\quad k\in B^+,
\\
&M^+(k) = M^-(k)V_3,\quad k\in B^-,
\end{align}
and the normalization condition
\begin{equation}
M(k) =I +O (1/k),\quad k \to \infty,
\end{equation}
\end{subequations}
where 
\begin{subequations}\label{rhp0-jumps}
\begin{equation}
V_1(x, t, k)
=
\begin{pmatrix}
\dfrac{1}{d(k)}\left[1+r(k)\bar r(k)\right]
&
\bar r(k)e^{2i \theta(\xi, k) t} 
\\
r(k) e^{-2i \theta(\xi, k) t} 
&
d(k)
\end{pmatrix},
\end{equation}
\begin{equation}
V_2(x, t, k)
=
\begin{pmatrix}
- \dfrac{\lambda-k}{iq_-}\,  \bar r(k) \, e^{2i \theta(\xi, k) t}
&
 \dfrac{2\lambda}{i\bar q_-} 
\\
 \dfrac{\bar q_-}{2i \lambda}  \left[1+r(k)\bar r(k)\right] 
 &
 -\dfrac{\lambda+k}{i\bar q_-} \, r(k) \,  e^{-2i \theta(\xi, k) t}
\end{pmatrix},
\end{equation}
\begin{equation}
V_3(x, t, k)
=
\begin{pmatrix}
\dfrac{\lambda+k}{iq_-}\, \bar r(k) \, e^{2i \theta(\xi, k) t}
& 
\dfrac{q_-}{2i\lambda}\left[1+r(k)\bar r(k)\right]
\\
\dfrac{2\lambda}{iq_-} 
&
\dfrac{\lambda-k}{i\bar q_-}\, r(k)\,  e^{-2i \theta(\xi, k) t}
\end{pmatrix},
\end{equation}
\end{subequations}
the reflection coefficient $r$ is defined by equation \eqref{refdef}, the similarity variable is
\begin{equation}
\xi  =  x/t,
\end{equation}
and the function $\theta(\xi, k) = \vartheta(x,t,k)/t$ is given by
\begin{equation}\label{thet}
\theta(\xi, k) = \lambda \left(\xi-2k \right).
\end{equation}
\end{rhp}


\section{Long-time asymptotics: preliminaries}
\label{outline-sec}

We now compute the long-time asymptotic behavior of of the solution $q$ of the focusing NLS equation,
as obtained by equation~\eqref{qsol}, 
by analyzing  the Riemann-Hilbert problem \ref{rhp0} via the Deift-Zhou nonlinear steepest descent for oscillatory Riemann-Hilbert problems \cite{dz1993}. 
Recall that, in general, the Deift-Zhou method is based on deforming the jump contours of the Riemann-Hilbert problem to contours in the complex $k$-plane across which the relevant jumps have a well-defined limit as $t\to \infty$.
In our case, it turns out that,
after the appropriate deformations have been performed, 
the majority of the jumps across the deformed contours tend to the identity matrix, 
while those that yield the leading order contribution to the asymptotics tend to constant matrices.

\paragraph{Analyticity in a neighborhood of the continuous spectrum.}
Hereafter, in order to be able to deform the contours away from the continuous spectrum $\Sigma$, we place an additional restriction on the initial datum $q(x, 0)$ which ensures that the reflection coefficient $r$ can be analytically extended off the continuous spectrum.

\begin{lemma}[Analyticity of the reflection coefficient]\label{an-lem}
Suppose that there exist a constant  $\epsilon>0$ such that 
\begin{equation}\label{an-cond}
e^{\pm \epsilon x}\left(q(x, 0) - q_{\pm}\right) \in L^1(\mathbb R^\pm),
\end{equation}
where $q(x,0)$ and $q_\pm$ are the initial and boundary conditions, respectively, of the focusing NLS equation \eqref{e:NLS}.
Then, all the eigenfunctions and the spectral functions $a$ and $b$ are analytic in the region
$$
\Sigma_{\epsilon} = \left\{ k\in\mathbb C\setminus B: \left|\textnormal{Im}(\lambda)\right|<\epsilon \right\},
$$
where $\lambda$ is defined by equation \eqref{ldef}. The same conclusion follows for the reflection coefficient $r$ defined by equation \eqref{refdef}
as long as $a(k)\neq 0$ for all $k\in \Sigma_{\epsilon}$.
\end{lemma}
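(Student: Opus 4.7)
The plan is to revisit the Neumann series for the Volterra integral equations \eqref{inteqmu} at $t=0$ and show that, under the exponential decay hypothesis \eqref{an-cond}, the series converges uniformly on compact subsets of $\Sigma_\epsilon$, thereby extending the analyticity of $\mu_\pm$ from the standard half-planes established earlier in the section to the enlarged strip $\Sigma_\epsilon$. Once $\mu_\pm$ are shown to be analytic there, the Wronskian representations \eqref{abdef} will immediately promote the analyticity to the scattering coefficients $a$, $b$, and, through \eqref{refdef}, to the reflection coefficient $r$.

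First I would examine the matrix kernel in \eqref{inteqmu}. After conjugation by $e^{\pm i\lambda(x-y)\sigma_3}$, the diagonal entries of the kernel carry no exponential factor, while the off-diagonal entries carry $e^{\pm 2i\lambda(x-y)}$; the bounded, $B$-analytic matrices $E_\pm(k)$ contribute only $k$-dependent prefactors that are harmless on compact subsets of $\mathbb{C}\setminus B$. For $k\in\Sigma_\epsilon$, the bound $|\mathrm{Im}\,\lambda|<\epsilon$ gives $|e^{\pm 2i\lambda(x-y)}|\le e^{2\epsilon|x-y|}$. Focusing on $\mu_-$ (the case $\mu_+$ being analogous with $\pm\infty$ and signs interchanged), where $y\le x$, this factor splits as $e^{2\epsilon x}e^{-2\epsilon y}$, so one iteration of the Volterra operator picks up a factor controlled by
\begin{equation*}
\int_{-\infty}^{x} e^{-2\epsilon y}\,|\Delta Q_-(y,0)|\,dy,
\end{equation*}
which is finite by \eqref{an-cond} (possibly after the harmless relabeling $\epsilon\mapsto 2\epsilon$ to match conventions). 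A standard Volterra estimate then bounds the $n$-th term of the Neumann series by $C(x,k)^n/n!$, yielding uniform convergence on compact subsets of $\mathbb{R}\times\Sigma_\epsilon$.

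Since each term of the Neumann series is manifestly analytic in $k\in\Sigma_\epsilon$ (the $k$-dependence enters only through $\lambda(k)$ and $E_\pm(k)$, both analytic on $\mathbb{C}\setminus B\supset\Sigma_\epsilon$), the uniform limit $\mu_\pm(x,0,k)$ is analytic on $\Sigma_\epsilon$ by Weierstrass's theorem. The Wronskian formulas \eqref{abdef} then give analyticity of $a$ and $b$ on $\Sigma_\epsilon$, since $1/d(k)$ is analytic on $\mathbb{C}\setminus B$; the relation \eqref{refdef} yields the corresponding statement for $r=-b/\bar a$ wherever $\bar a$ does not vanish.

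The main point requiring care is the interplay with the branch cut: $\lambda$ (and hence $E_\pm$) has a jump across $B$, so one must verify that $\Sigma_\epsilon$, defined as a tubular neighborhood of $\Sigma$ with $B$ excised, is genuinely a domain on which all of $\lambda$, $1/d$, and $E_\pm$ are simultaneously analytic and uniformly bounded on compacts — which is precisely how the set is set up. Beyond this bookkeeping, the argument is routine Volterra theory, and the only nontrivial ingredient is the exponential weight in \eqref{an-cond} that absorbs the loss from allowing $\mathrm{Im}\,\lambda$ to be nonzero.
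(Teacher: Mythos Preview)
Your proposal is correct and follows exactly the approach the paper indicates: the paper does not give a detailed proof of this lemma but simply states that it ``can be established by employing a Neumann series for the integral equations \eqref{inteqmu} that define $\mu_\pm$, using similar arguments as in~\cite{bk2014}.'' Your outline fleshes out precisely that argument, and your caution about the factor of~$2$ (the off-diagonal kernel entries carry $e^{\pm 2i\lambda(x-y)}$, so the naive estimate yields analyticity in $|\mathrm{Im}\,\lambda|<\epsilon/2$ rather than $\epsilon$) is well placed; it is a convention issue that does not affect the qualitative conclusion or its use in the rest of the paper.
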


The region $\Sigma_\epsilon$ in Lemma~\ref{an-lem} is the analogue of what is known as the Bargmann strip in the case of zero conditions at infinity \cite{g1991}.
Lemma~\ref{an-lem} can be established by employing a Neumann series for the integral equations \eqref{inteqmu} that define $\mu_\pm$, using similar arguments as in~\cite{bk2014}. 
It is important to note that the contour lines $\Im(\lambda)=0$ do not intersect the branch cut $B$, 
e.g., see Figure \ref{iml-cont}. 
Hence, the existence of any $\epsilon>0$ ensures that $r$ is analytic in a neighborhood of $B$ (except possibly at the branch points), 
which is the domain of analyticity required for the deformations in the Deift-Zhou method.
\begin{figure}[b!]
\begin{center}
\includegraphics[scale=.435]{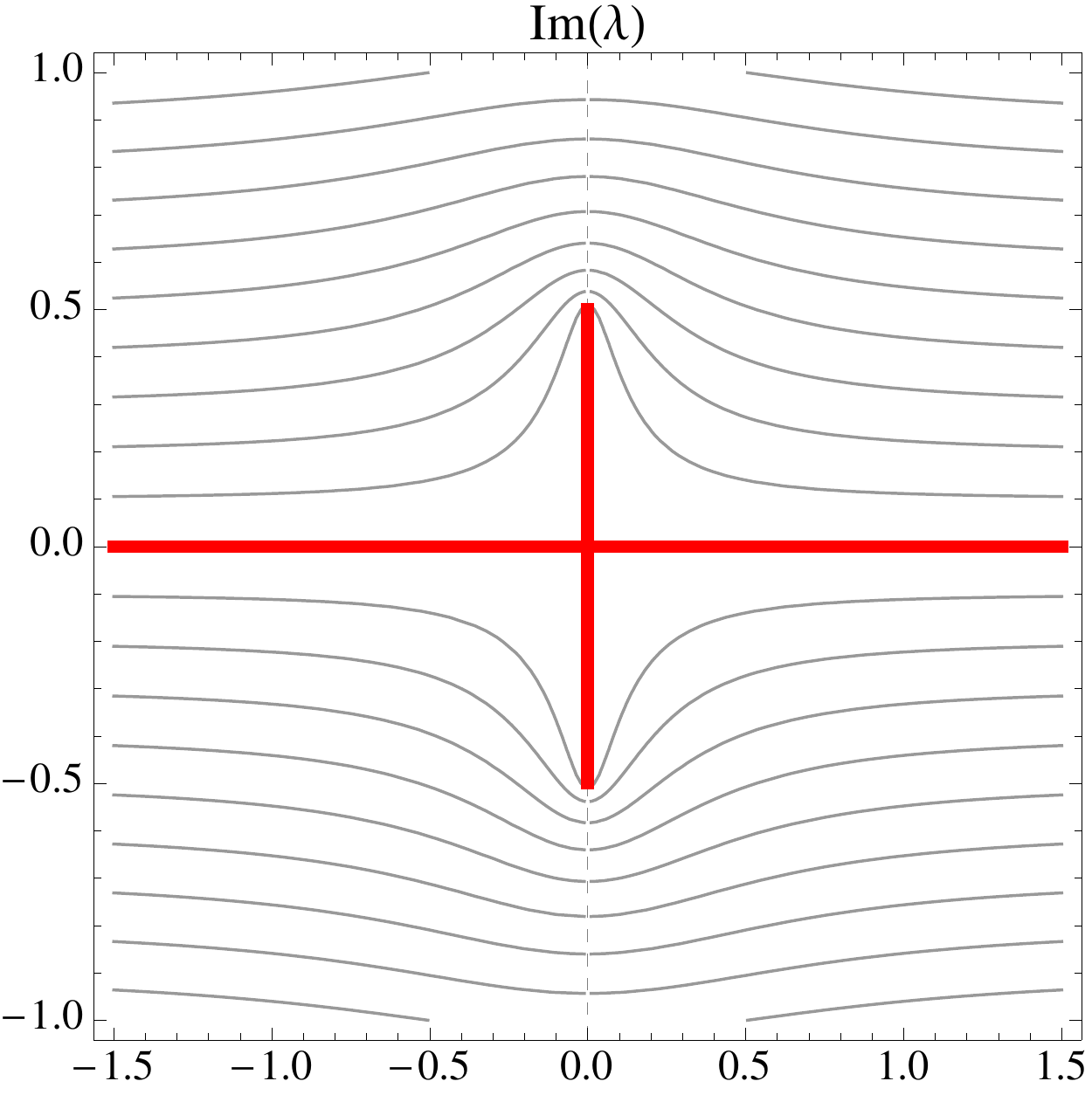}
\caption{Contour plot of $\Im(\lambda)$ for $q_{o} =\frac 12$.}
\label{iml-cont}
\end{center}
\end{figure}

When the hypotheses of Lemma \ref{an-lem}  are satisfied, the jump condition~\eqref{acut} also holds across~$B^-$.
In addition, \eqref{abdef} and \eqref{bcm} yield a similar jump condition for $b$, namely,
$b^+(k) =
-({\bar q_+}/{q_-})\,\bar b(k)$ for all $k\in B$.
Hence, we obtain the following jump condition for the reflection coefficient $r$ across~$B$, 
namely
\begin{equation}\label{rp0}
r^+(k) 
= -({\bar q_-}/{q_-})\, \bar r(k), \quad k\in B.
\end{equation}

An example of an initial condition satisfying the hypothesis of Lemma \ref{an-lem} is  the following box-like initial datum
considered in \cite{bf2015}:
\begin{equation}
q(x, 0)
=
\begin{cases}
q_{o}, &|x|>L,
\\
\beta e^{i \chi }, &|x|<L, 
\end{cases}
\end{equation}
with $\beta>0$, $L>0$ and $\chi\in\Real$,
which gives rise to the  reflection coefficient
\begin{equation}
r(k)
=
\frac{e^{2i \lambda L}
\big[\left(\beta \cos\chi -q_{o} \right)k-i \beta \lambda \sin \chi \big]}
{\lambda \sqrt{k^2+\beta^2} \cot\big(2L\sqrt{k^2+\beta^2}\, \big) - i  \left(k^2+q_{o}  \beta \cos\chi \right)}.
\end{equation}

\paragraph{The sign structure of $\Re(i\theta)$.}
The choice of deformations of the Deift-Zhou method  depends crucially on
the sign structure of the quantity $\Re(i\theta)$, which is involved  in all three jump matrices \eqref{rhp0-jumps} of the Riemann-Hilbert problem
\ref{rhp0}.
From the definition \eqref{thet} of $\theta$ we have
\begin{equation}\label{rethet}
\Re(i\theta)
=
-\lambda_\im \left(\xi-2k_\re\right)+2\lambda_\re k_\im.
\end{equation}
The above expression simplifies significantly in the
special cases  $|k_\im|\ll 1$ and  $|k_\im|\gg 1$.
When $|k_\im|\gg 1$, recalling that $\lambda \sim k$
as $k\to \infty$ we have 
\begin{equation}
\Re(i\theta)
=
 \left(4k_\re-\xi\right)k_\im +O(1/{k_\im}),
 \quad k_\im \to \pm\infty.\label{rethet3}
\end{equation}
Thus, as $k_\im\to\pm\infty$, the sign of $\Re(i\theta)$ is determined by that of $k_\re-\xi/4$.
%
When $0<k_\im\ll 1$, on the other hand, the situation is more complicated.  
The definition of $\lambda$ implies
\begin{equation}\label{las}
\lambda
=
\textrm{sign}(k_\re)
\sqrt{k_\re^2+q_{o} ^2}
\left[
1-\dfrac{q_{o} ^2}{2(k_\re^2+q_{o} ^2)^2}\, k_\im^2
+
\dfrac{ik_\re}{k_\re^2+q_{o} ^2}\, k_\im
+O(k_\im^3)\right].
\end{equation}
Hence, 
\begin{equation}\label{rethet2}
\Re(i\theta)
=
\textrm{sign}(k_\re)\,
\dfrac{4}{\sqrt{k_\re^2+q_{o} ^2}}
\left(
k_\re^2-\dfrac{\xi}{4}\, k_\re+\dfrac{q_{o} ^2}{2}
\right)
k_\im 
+O(k_\im^3),\quad  k_\im\to 0^+.
\end{equation}
For $|\xi|<4\sqrt 2 q_{o} $ the quadratic expression in the leading order term of \eqref{rethet2} is always positive, while for $|\xi|\geqslant 4\sqrt 2 q_{o} $ it has real roots $k_1, k_2$ equal to 
\begin{equation}\label{kroots}
k_{1, 2}
= \tfrac18 \bigg(
\xi\pm\sqrt{\big(\xi-4\sqrt 2q_{o} \big)\big(\xi+4\sqrt 2q_{o} \big)}\, \bigg),
\end{equation}
where we take $k_1<k_2$.
A similar expansion is obtained when $k_\im\to0^-$, leading to the same roots as in equation~\eref{kroots}.
The overall sign structure of $\Re(i\theta)$ in the complex $k$-plane is illustrated in Figure~\ref{f:imtheta}.

\begin{remark}
Importantly, the points $k_1, k_2$ are the stationary points of $\theta$.
In the following sections we will show that the sectors of the $xt$-plane where
$|\xi|> 4\sqrt 2 q_{o} $ correspond to \textit{plane wave regions},
whereas the sectors where $|\xi|<4\sqrt 2 q_{o} $ correspond to \textit{modulated elliptic wave regions}. 
\end{remark}

\begin{figure}[t!]
\includegraphics[width=0.24\textwidth]{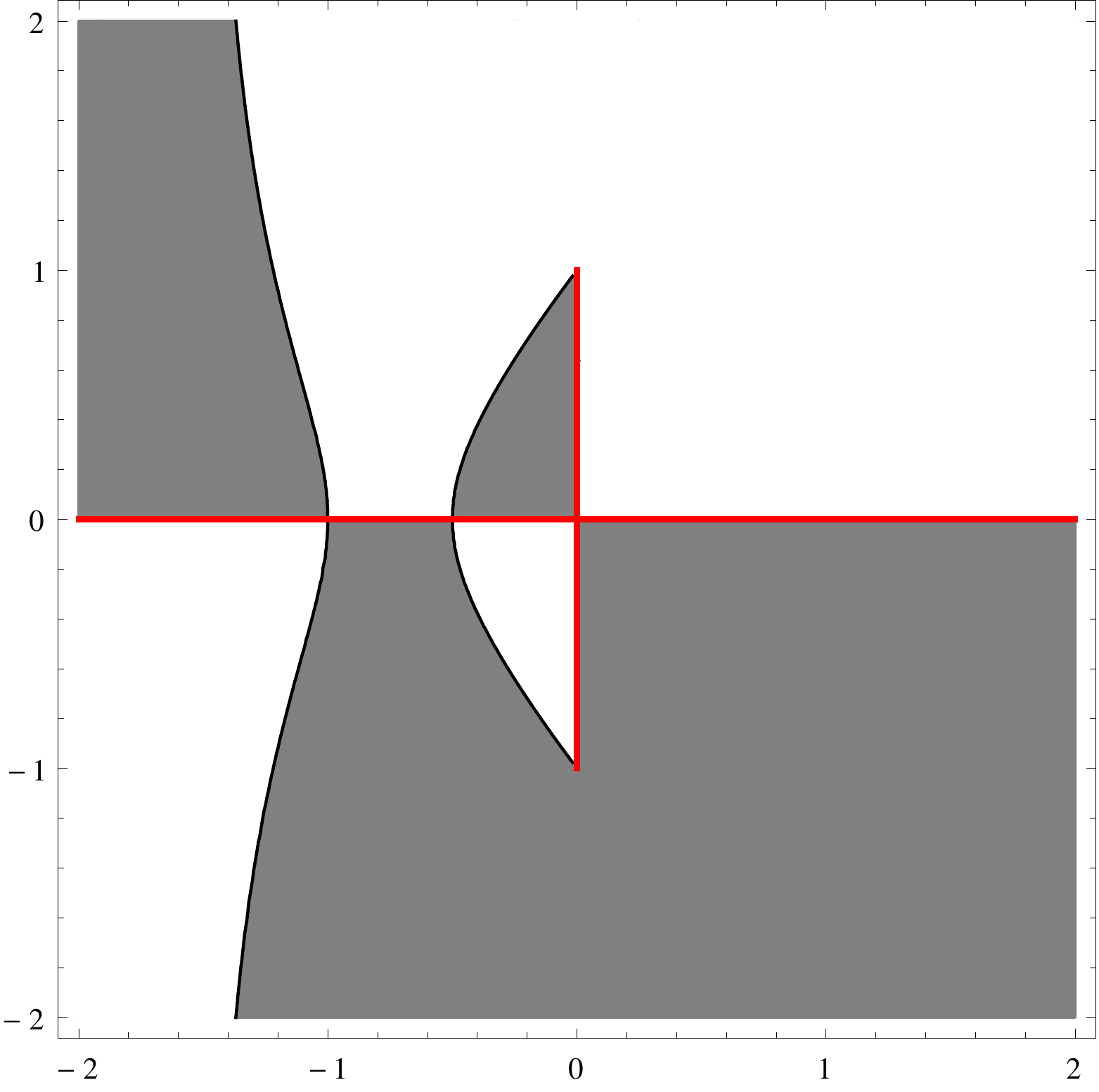}~%
\includegraphics[width=0.24\textwidth]{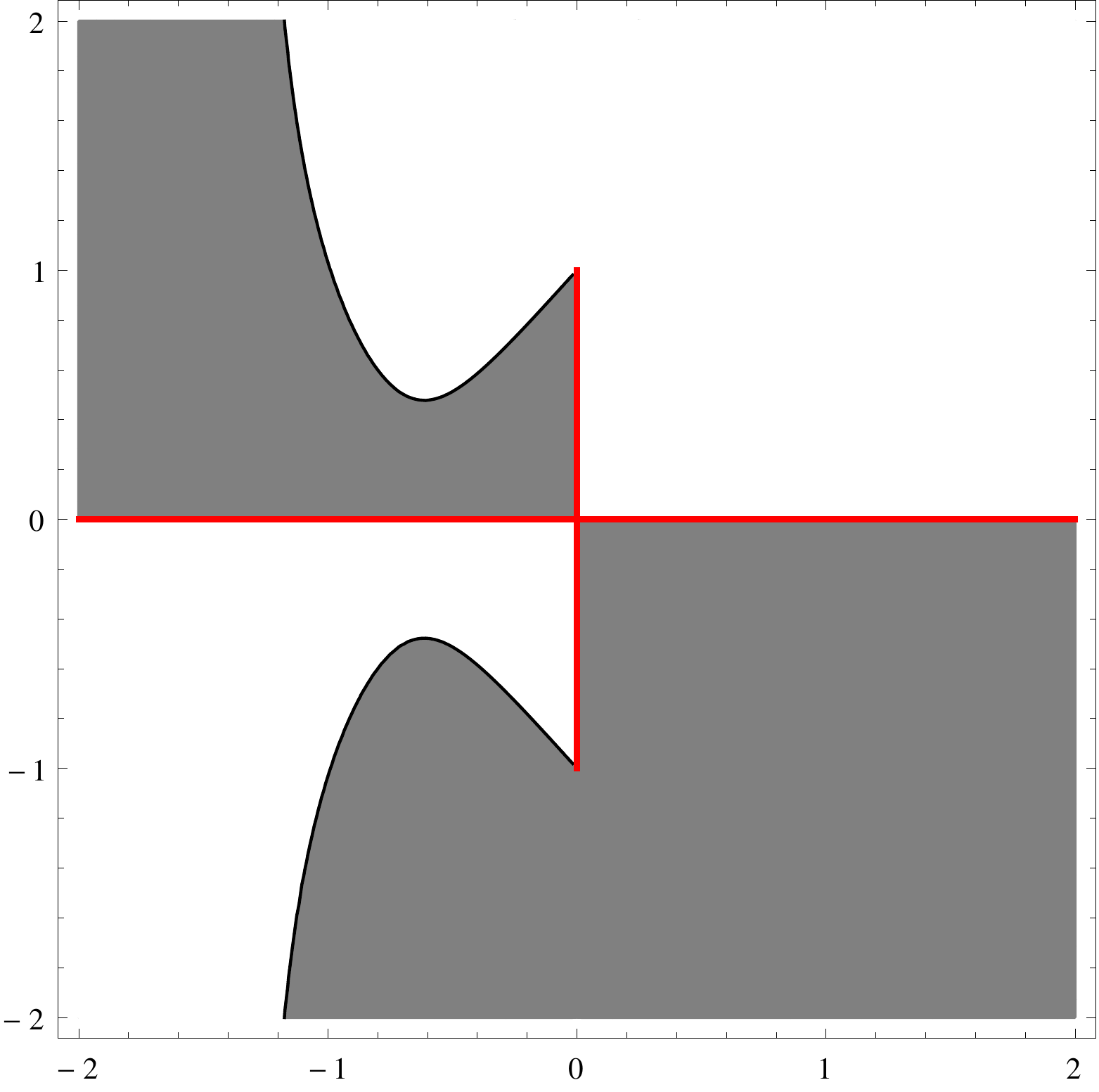}~%
\includegraphics[width=0.24\textwidth]{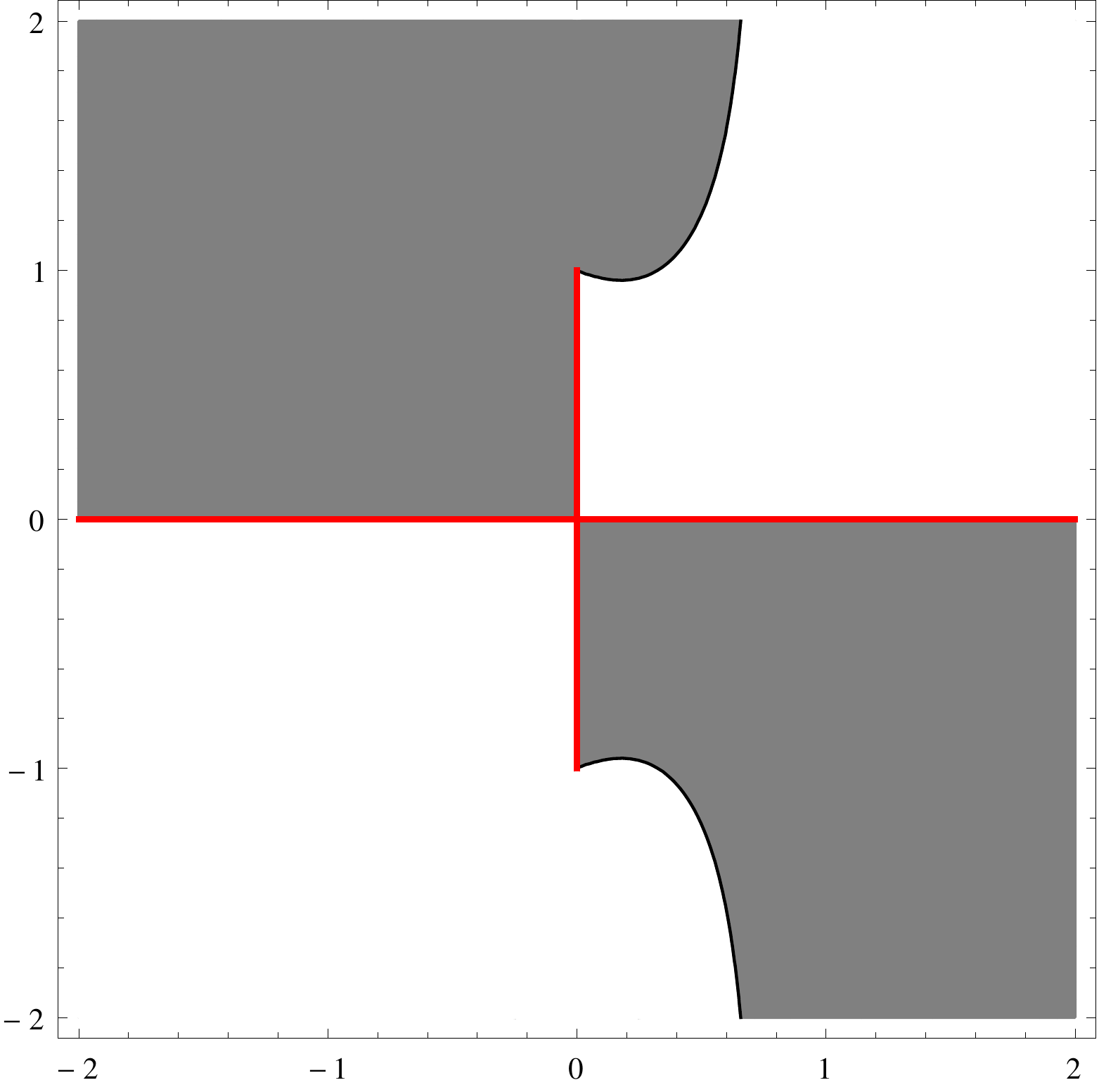}~%
\includegraphics[width=0.24\textwidth]{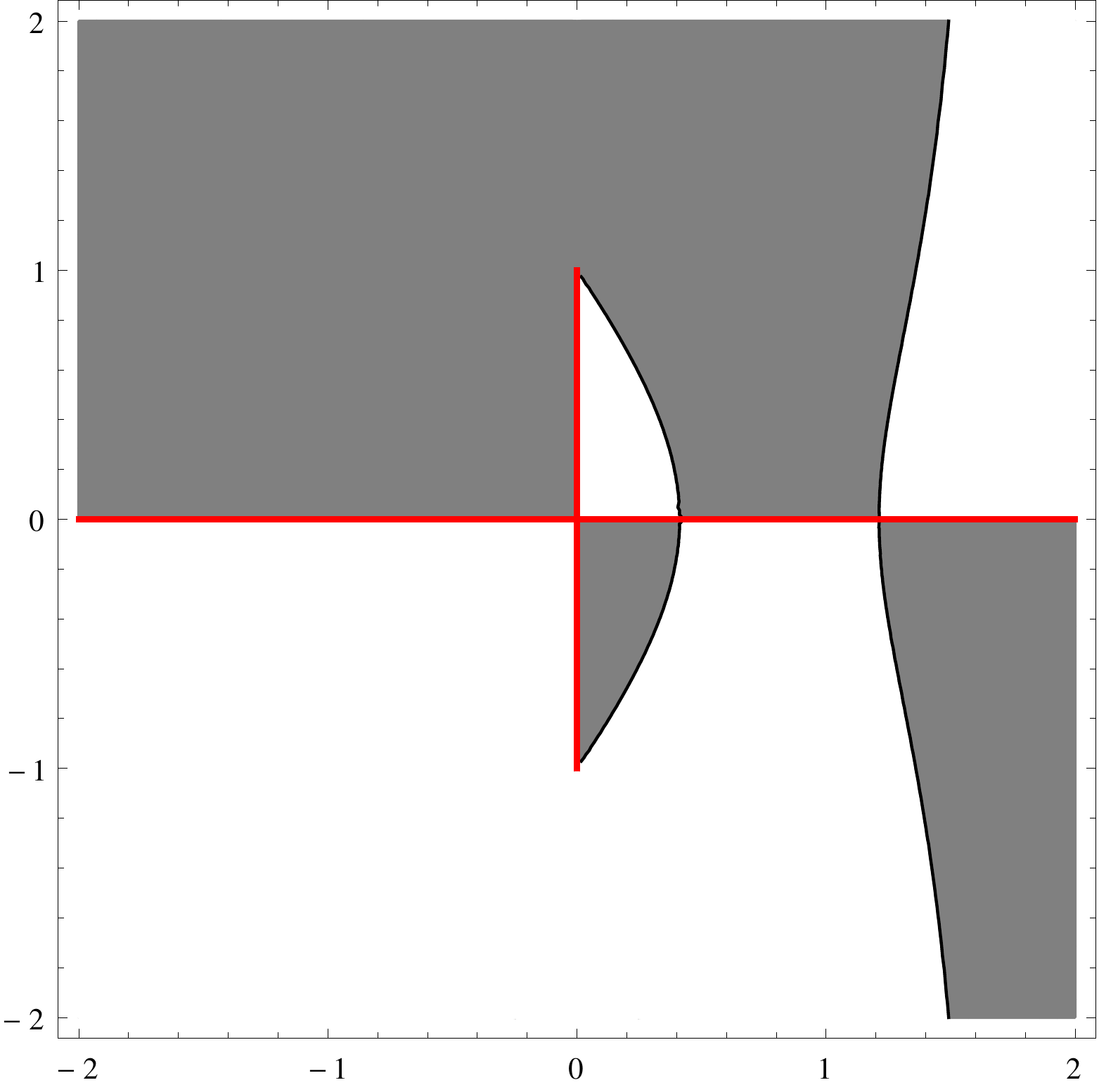}%
\begin{caption}
{The sign structure of $\Re(i\theta)$ in the complex $k$-plane for various values of $\xi$ and $q_{o} =1$:
(a)~$\xi = -6$, corresponding to $x< - 4\sqrt2q_{o} t$;
(b)~$\xi = -5.2$, corresponding to $-4\sqrt2q_{o} t<x<0$;
(c)~$\xi = 3$, corresponding to $0<x< 4\sqrt2q_{o} t$;
(d)~$\xi = 6.5$, corresponding to $x > 4\sqrt2q_{o} t$.
In the gray regions $\Re(i\theta)<0$, whereas in the white regions $\Re(i\theta)>0$.
Cases~(a) and~(d) correspond to plane wave regions, 
while cases~(b) and~(c) correspond to modulated elliptic wave regions.
The points $k_{1,2}$ in cases~(a) and~(d) correspond with the sign changes of $\Re(i\theta)$ along the real axis away from $k=0$. 
}
\label{f:imtheta}
\end{caption}
\end{figure}


\section{ The plane wave region: proof of theorem \ref{pw-t}}\label{pw-reg}
\label{pw-sec}

In this section we prove Theorem \ref{pw-t}, i.e., we compute the leading order long-time asymptotic behavior of the  solution of the focusing NLS equation \eqref{e:NLS} in the plane wave region $|x|>4\sqrt 2 q_{o}  t$.
As mentioned earlier, we do so by performing appropriate deformations of the Riemann-Hilbert problem~\ref{rhp0}.

All three jump matrices $V_1^{(0)}=V_1$, $V_2^{(0)}=V_2$ and $V_3^{(0)}=V_3$ of the Riemann-Hilbert problem \ref{rhp0} contain both $e^{i \theta t}$ and $e^{-i \theta t }$. 
Recall also that $\theta(\xi,k)\in\Real$ for all $k\in\Sigma$.
The first step required in order to take the limit $t\to \infty$ is to express these jumps as products of matrices that 
involve only one of the two aforementioned exponentials,
and with the matrices ordered in such a way that they remain bounded when the contour is deformed away from $\Sigma$.

\paragraph{Preliminary factorizations: $x<-4\sqrt2q_{o} t$.}
We will show that, in this case, the factorizations of $V_1^{(0)}$ convenient for our purposes are
\begin{equation}
\label{v10fac}
V_1^{(0)} = \begin{cases}
  V_2^{(1)}V_0^{(1)}V_1^{(1)}, &k_\re<k_1,\\
  V_4^{(1)}V_3^{(1)}, &k_\re>k_1,
\end{cases}
\end{equation}
%
where 
\begin{subequations}
\label{v112345}
\begin{gather}
V_0^{(1)}
=
\begin{pmatrix}
1+r\bar r & 0 \\
0 & \dfrac{1}{1+r\bar r}
\end{pmatrix},
\quad
V_1^{(1)}
=
\begin{pmatrix}
d^{-\frac 12} & \dfrac{d^{\frac 12}\, \bar r e^{2i \theta t } }{1+r\bar r} \\
0 & d^{\frac 12}
\end{pmatrix},
\\
V_2^{(1)}
=
\begin{pmatrix}
d^{-\frac 12} &  0 \\
\dfrac{d^{\frac 12} r e^{-2i \theta t } }{1+r\bar r} & d^{\frac 12}
\end{pmatrix},
\quad
V_3^{(1)}
=
\begin{pmatrix}
d^{-\frac 12} & 0 \\
d^{-\frac 12}r e^{-2i \theta t } & d^{\frac 12}
\end{pmatrix},
\quad
V_4^{(1)}
=
\begin{pmatrix}
d^{-\frac 12}
&
d^{-\frac 12}\bar r e^{2i \theta t } 
\\
0
&
d^{\frac 12}
\end{pmatrix}.
\end{gather}
\end{subequations}
Moreover, since $k_{1,2}<0$ in this region (cf.\ Figure~\ref{f:imtheta}\textcolor{blue}{a}), the branch cut $B$ lies to the right of the point $k_1$.
Thus, the appropriate factorizations for the jumps $V_2^{(0)}$ and $V_3^{(0)}$ will turn out to be 
\begin{subequations}\label{cut1m}
\begin{align}
V_2^{(0)}
&=
\big(V_{3-}^{(1)}\big)^{-1}
V_B
V_{3+}^{(1)},\quad k\in B^+,
\\
V_3^{(0)}
&=
V_{4-}^{(1)}
V_B
\big(V_{4+}^{(1)}\big)^{-1},\quad k\in B^-,
\end{align}
\end{subequations}
where $V_{3,4\pm}^{(1)}$ denote the left and right-sided limits of $V_{3,4}^{(1)}$ and $V_B$ is the \textit{constant} matrix
\begin{equation}
V_B
=
\begin{pmatrix}
0
&
{q_-}/{iq_{o} }
\\
{\bar q_-}/{iq_{o} }
&
0
\end{pmatrix}.\label{v18}
\end{equation}
%

\begin{remark}[$k_1$ versus $k_2$]
In the case of $x<-4\sqrt2q_{o} t$, the point $k_2$ is not significant concerning the choice of factorization of the jump $V_1^{(0)}$. 
This is because, 
as can be seen from Figure \ref{f:imtheta}\textcolor{blue}{a}, 
the change of sign at $k_2$ affects only a finite region of the $k$-plane, 
which can therefore be bypassed by suitable deformations of the jump contours.
The opposite is of course true for $x>4\sqrt2q_{o} t$.
\end{remark}

\begin{remark}
[$x<-4\sqrt2q_{o} t$ versus $x>4\sqrt2q_{o} t$]
\label{xpm-rem}
For $x<-4\sqrt2q_{o} t$, using the factorizations~\eqref{v10fac}-\eqref{cut1m} and performing suitable deformations in the complex $k$-plane, 
we will eventually be able to reduce the jump across the branch cut $B$ to the \textit{constant} matrix $V_B$ defined above. 
This reduction is crucial, since it implies that the Riemann-Hilbert problem that yields the leading order asymptotic behavior of the  focusing NLS solution can be solved explicitly.
For $x>4\sqrt2q_{o} t$, however, the sign structure of $\Re(i\theta)$ is such that the desired reduction cannot be accomplished using the above factorizations. 
In that case, it is then necessary to first perform a rescaling of the original Riemann-Hilbert problem, 
as discussed next.
\end{remark}

\paragraph{Preliminary factorizations: $x>4\sqrt2q_{o} t$.} 
In this case we first rescale the Riemann-Hilbert problem \ref{rhp0}  as follows. Let
\begin{equation}\label{mrhtil}
\tilde M(x, t, k)
=
\begin{cases}
M(x, t, k)\overline{A}(k), &k\in\mathbb C^+\setminus \Sigma^+,
\\
M(x, t, k)A^{-1}(k), &k\in\mathbb C^-\setminus \Sigma^-,
\end{cases}
\quad
A(k) 
=
\begin{pmatrix}
a(k) & 0
\\
0 & a(k)^{-1}
\end{pmatrix}.
\end{equation}
Then, $\tilde M(k)= \tilde M(x, t, k)$ is analytic in $\Complex\setminus \Sigma$
and  satisfies the jump conditions
\begin{subequations}\label{rhp0til}
\begin{align}
&\tilde M^+( k) = \tilde M^-(k) \tilde V_1^{(0)},\quad k\in \mathbb R,
\\
&\tilde M^+(k) = M^-( k)\tilde V_2^{(0)},\quad k\in B^+,
\\
&\tilde M^+(k) = \tilde M^-(k)\tilde V_3^{(0)},\quad k\in B^-,
\end{align}
and the normalization condition
\begin{equation}
\tilde M(k) =I +O ({1}/{k}),\quad k \to \infty,
\end{equation}
\end{subequations}
where 
\begin{equation}\label{jump0til}
\tilde V_1^{(0)} = A V_1^{(0)} \overline{A}, 
\quad \
\tilde V_2^{(0)} = \big(\overline{A^-}\big)^{-1} V_2^{(0)} \overline{A^+},
\quad \
\tilde V_3^{(0)} = A^- V_3^{(0)} \left(A^+\right)^{-1}.
\end{equation}
The advantage of considering $\tilde M$ instead of $M$ for $x>4\sqrt2q_{o} t$ is that, in contrast to the jumps $V_1^{(0)}$, $V_2^{(0)}$ and $V_3^{(0)}$, 
the  jumps $\tilde V_1^{(0)}$, $\tilde V_2^{(0)}$ and $\tilde V_3^{(0)}$
can be factorized in a way that eventually leads to a Riemann-Hilbert problem 
with a \textit{constant} jump across $B$, just like in the case $x<-4\sqrt2q_{o} t$.
%
Indeed, we may now use the factorizations 
\begin{equation}\label{v10factil}
\widetilde 
V_1^{(0)} =
\begin{cases}
\tilde V_2^{(1)}\tilde V_0^{(1)}\tilde V_1^{(1)},  &k_\re>k_2,
\\
\tilde V_4^{(1)}\tilde V_3^{(1)},  &k_\re<k_2,
\end{cases}
\end{equation}
where for  $\rho(k) = -b(k)/a(k)$ we define
\begin{subequations}\label{v112345til}
\begin{equation}
\tilde V_0^{(1)} =
\begin{pmatrix}
\dfrac{1}{1+\rho\bar \rho}
&
0
\\
0
&
1+\rho\bar \rho
\end{pmatrix},
\quad
\tilde V_1^{(1)}
=
\begin{pmatrix}
d^{-\frac 12}
&
0
\\
\dfrac{d^{-\frac 12} \rho e^{-2i \theta t } }{1+\rho\bar \rho}
&
d^{\frac 12}
\end{pmatrix},
\end{equation}
\begin{equation}
\tilde V_2^{(1)} =
\begin{pmatrix}
d^{-\frac 12} & \dfrac{d^{-\frac 12}\, \bar \rho e^{2i \theta t } }{1+\rho\bar \rho} \\
0 & d^{\frac 12}
\end{pmatrix},
\quad
\tilde V_3^{(1)}
=
\begin{pmatrix}
d^{-\frac 12}
&
d^{\frac 12}\bar \rho e^{2i \theta t } 
\\
0
&
d^{\frac 12}
\end{pmatrix},
\quad
\tilde V_4^{(1)}
=
\begin{pmatrix}
d^{-\frac 12}
&
0
\\
d^{\frac 12}\rho e^{-2i \theta t } 
&
d^{\frac 12}
\end{pmatrix}.
\end{equation}
\end{subequations}
Also, since the branch cut $B$ lies to the left of $k_2$ (see Figure \ref{f:imtheta}\textcolor{blue}{d}), the appropriate factorizations for $\tilde V_2^{(0)}$ and $\tilde V_3^{(0)}$ are
\begin{subequations}\label{cut1til}
\begin{align}
\tilde V_2^{(0)}
&=
\big(\tilde V_{3-}^{(1)}\big)^{-1}
\tilde V_B
\tilde V_{3+}^{(1)},\quad k\in B^+,
\\
\tilde V_3^{(0)}
&=
\tilde V_{4-}^{(1)}
\tilde V_B
\big(\tilde V_{4+}^{(1)}\big)^{-1},\quad k\in B^-,
\end{align}
where the \textit{constant}  matrix $\tilde V_B$ is defined by
\begin{equation}
\tilde V_B
=
\begin{pmatrix}
0
&
{q_+}/{iq_{o} }
\\
{\bar q_+}/{iq_{o} }
&
0
\end{pmatrix}.
\end{equation}
\end{subequations}

\begin{remark}
The factorizations \eqref{v10factil} and~\eqref{cut1til} are completely analogous to the factorizations~\eqref{v10fac} and~\eqref{cut1m}. 
Moreover, it is straightforward to see that $(M_1(x,t))_{12} = (\tilde M_1(x,t))_{12}$
since $a(k) = 1 + O(1/k^2)$ as $k\to\infty$ (e.g., see \cite{bk2014}).
Hence, the rescaling does not affect the reconstruction formula~\eref{qsol} of the potential.
As a result, once the rescaling~\eref{mrhtil} has been performed,
the steps required for the implementation of the Deift-Zhou method in the two cases $x<-4\sqrt2q_{o} t$ and $x>4\sqrt2q_{o} t$ are identical.
Therefore, we hereafter limit ourselves to discussing the case $x<-4\sqrt2q_{o} t$ for brevity.
\end{remark}

%
%
%
%
%
\paragraph{First deformation.}
Define $M^{(1)} = M^{(1)}(x, t, k)$  in terms of the solution $M^{(0)}= M$ of 
the Riemann-Hilbert problem \ref{rhp0} 
according to Figure \ref{figdef1},
with the matrices $V_0^{(1)},\dots,V_B$ given by equations \eqref{v112345} and \eqref{v18}.
Then $M^{(1)}$ is analytic in $ \mathbb C\setminus ( B\cup L_0\cup L_1\cup L_2\cup_{j=1}^4 L_{3, j}\cup L_{4, j})$ and satisfies the jump conditions
\begin{subequations}\label{rhp1}
\begin{align}
&M^{(1)+}( k) =M^{(1)-}( k)\, V_B,\hskip 1.36cm k\in B,
\\
&M^{(1)+}( k) =M^{(1)-}( k)\, V_j^{(1)},\hskip 1.15cm k\in L_j,\ j=0,1,2,
\\
&M^{(1)+}( k) =M^{(1)-}( k)\, V_3^{(1)},\hskip 1.15cm k\in L_{3, 1}\cup L_{3, 2},
\\
&M^{(1)+}( k) =M^{(1)-}( k)\, \big(V_3^{(1)}\big)^{-1},\quad k\in L_{3, 3}\cup L_{3, 4},
\\
&M^{(1)+}( k) =M^{(1)-}( k)\, V_4^{(1)},\hskip 1.15cm k\in L_{4, 1}\cup L_{4,3},
\\
&M^{(1)+}( k) =M^{(1)-}( k)\, \big(V_4^{(1)}\big)^{-1},\quad k\in L_{4, 2}\cup L_{4,4},
\end{align}
with the jump contours $L_j$ as in Figure \ref{figdef1},
and the normalization condition
\begin{equation}
M^{(1)}(k) =I +O(1/k), \quad k \to \infty\,.
\end{equation}
\end{subequations}
Note that the jump across $(k_1, \infty)$ has been eliminated as a result of the transformation.
%
%
%
%
%
Furthermore, note that all the jump contours of Figure \ref{figdef1}
apart from $(-\infty, k_1)$ and the branch cut $B$
can  be deformed to a single contour
which intersects with  the continuous spectrum $\Sigma$ only at $k_1$,
as shown in Figure~\ref{figdef2}.
Near the branch point $iq_{o} $, in particular, we first deform the contours $L_{3,2}$ and $L_{3, 3}$ as shown in Figure \ref{figdefcut} 
and then further deform the contours $\cup_{j=1}^4 L_{3, j}$ of Figure~\ref{figdef1} to the contour $L_3$ of Figure \ref{figdef2}, which does not go through the origin. 
The point $-iq_{o} $ is handled in an analogous way, so that the contours $\cup_{j=1}^4 L_{4, j}$ of Figure~\ref{figdef1} are deformed to the contour $L_4$ of Figure \ref{figdef2}.
\begin{figure}[ht!]
\begin{center}
\includegraphics[scale=1]{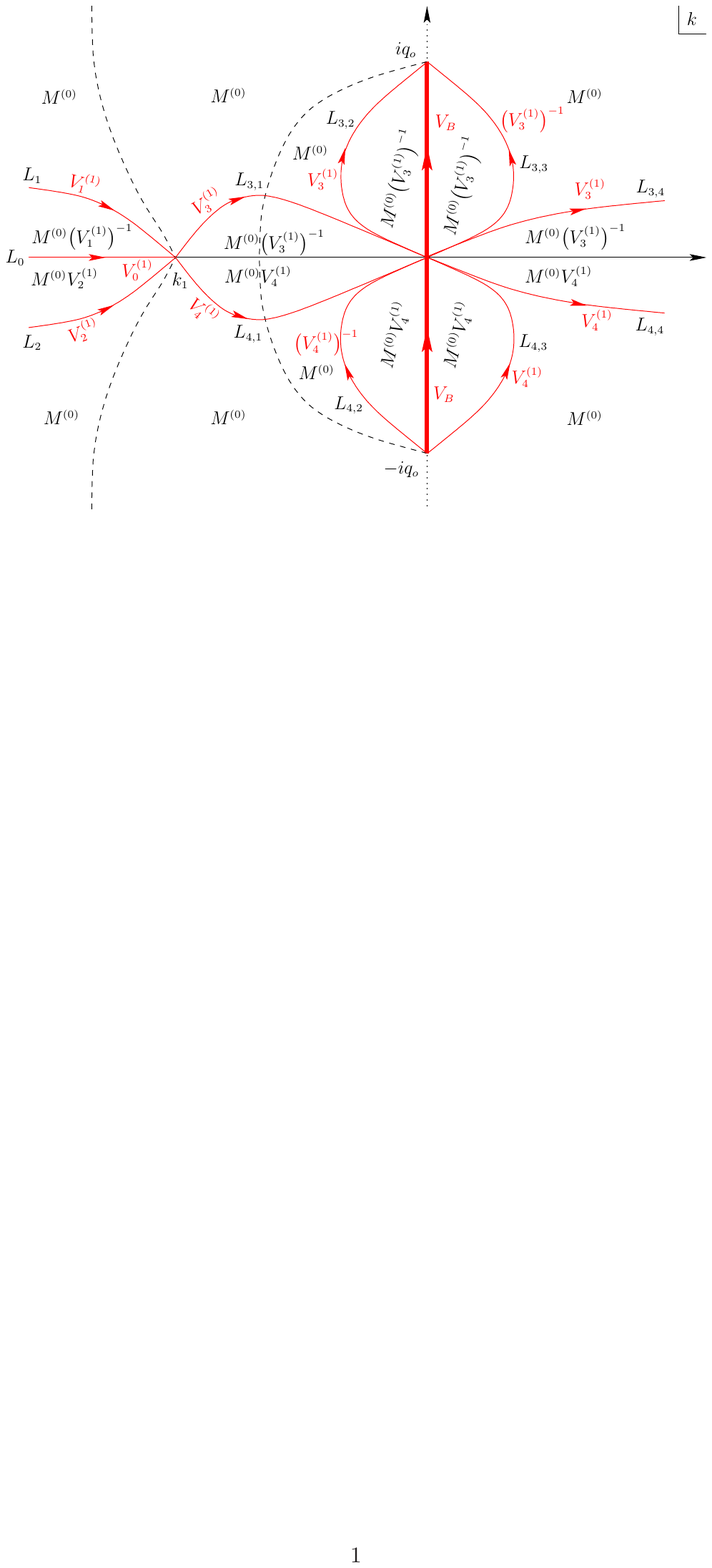}
\caption{The first deformation in the plane wave region.}
\label{figdef1}
\end{center}
\end{figure}
\begin{figure}[ht!]
\begin{center}
\includegraphics[scale=1]{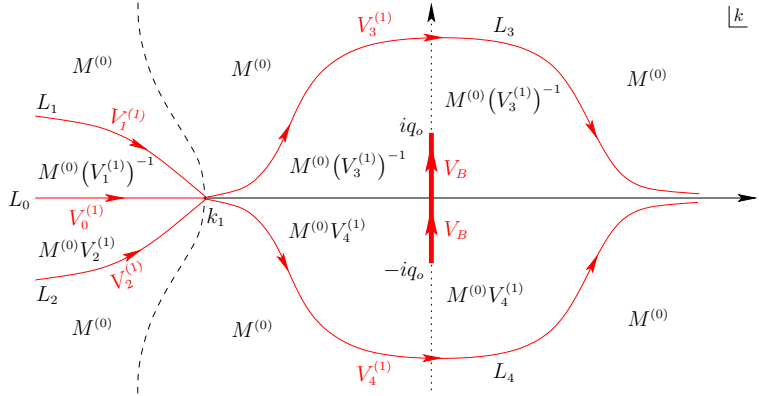}
\caption{Final form of the Riemann-Hilbert problem for $M^{(1)}$ in the plane wave region.}
\label{figdef2}
\end{center}
\end{figure}
\begin{figure}[ht]
\begin{center}
\includegraphics[scale=0.72]{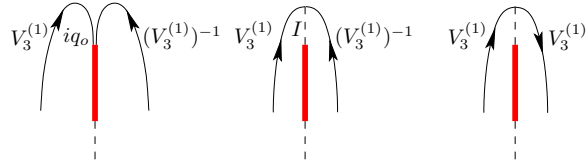}
\caption{The sequence of deformations around the branch point $iq_{o} $.}
\label{figdefcut}
\end{center}
\end{figure}

%
%
%
%
%
\paragraph{Second deformation.}
The purpose of this deformation is to eliminate the jump across $(-\infty, k_1)$.
This goal is accomplished
by introducing an auxiliary scalar function $\delta=\delta(k)$ which is  
analytic in $\mathbb C\setminus (-\infty, k_1)$ and satisfies the jump condition
\begin{subequations}\label{rhpdelta}
\begin{equation}
\delta^+(k) = \delta^-(k)\left[1+r(k)\bar r(k)\right],\quad k\in (-\infty, k_1),\label{deljump}
\end{equation}
and the normalization condition
\begin{equation}
\delta(k) =1+O(1/k),\quad k \to \infty.
\end{equation}
\end{subequations}
The solution of this scalar Riemann-Hilbert problem  is obtained in explicit form via the Plemelj formulae 
\begin{equation}\label{deldef}
\delta(k)
=
\exp
\left\{
\frac{1}{2i\pi}
\int_{-\infty}^{k_1}
\frac{\ln\!\big[1+r(\nu )\bar r(\nu )\big]}{\nu -k}\, d\nu 
\right\},\quad k\notin(-\infty, k_1).
\end{equation} 
Then, the function $M^{(2)}$ defined by
\begin{equation}\label{m2def}
M^{(2)}(x, t, k)
=
M^{(1)}(x, t, k) \delta(k)^{-\sigma_3}
\end{equation}
does not have a discontinuity across $(-\infty, k_1)$ since
$M^{(2)+}(k)
=
M^{(2)-}(k)
\left(\delta^{-}\right)^{\sigma_3}
V_0^{(1)}
\left(\delta^{+}\right)^{-\sigma_3}
=
M^{(2)-}(k)$ 
for all $k\in (-\infty, k_1)$.
Moreover, the jumps off the real $k$-axis become
\begin{subequations}\label{jump2b}
\begin{equation}
V_B^{(2)}
=
\begin{pmatrix}
0
&
\dfrac{q_-}{iq_{o} }\,\delta^{2}
\\
\dfrac{\bar q_-}{iq_{o} }\,\delta^{-2}
&
0
\end{pmatrix},
\quad
V_1^{(2)}
=
\begin{pmatrix}
d^{-\frac 12}
&
\dfrac{d^{\frac 12}\, \bar r e^{2i \theta t } }{1+r\bar r}\,\delta^{2}
\\
0
&
d^{\frac 12}
\end{pmatrix},
\quad
V_2^{(2)}
=
\begin{pmatrix}
d^{-\frac 12}
&
0
\\
\dfrac{d^{\frac 12} r e^{-2i \theta t } }{1+r\bar r}\,\delta^{-2}
&
d^{\frac 12}
\end{pmatrix},
\end{equation}
\begin{equation}
V_3^{(2)}
=
\begin{pmatrix}
d^{-\frac 12}
&
0
\\
d^{-\frac 12}\,r e^{-2i \theta t } \,\delta^{-2}
&
d^{\frac 12}
\end{pmatrix},
\quad
V_4^{(2)}
=
\begin{pmatrix}
d^{-\frac 12}
&
d^{-\frac 12}\,\bar r e^{2i \theta t } \,\delta^{2}
\\
0
&
d^{\frac 12}
\end{pmatrix},
\end{equation}
\end{subequations}
and, since 
$\delta$ is analytic away from $(-\infty, k_1)$,
no additional jumps are introduced.
Overall, the function $M^{(2)}$ is analytic in $ \mathbb C\setminus ( \cup_{j=1}^4 L_j \cup B )$ and satisfies the jump conditions
\begin{subequations}\label{rhp2}
\begin{align}
&M^{(2)+}( k) = M^{(2)-}( k) V_B^{(2)},\quad k\in  B,
\\
&M^{(2)+}( k) = M^{(2)-}( k) V_j^{(2)},\quad k\in L_j,\ j=1,2,3,4,
\end{align}
with the jump contours $L_j$ as shown in Figure \ref{figdef30}, and the normalization condition
\begin{equation}
M^{(2)}(k) =I +O(1/k),\quad k \to \infty.
\end{equation}
\end{subequations}

\begin{figure}
\begin{center}
\includegraphics[scale=1]{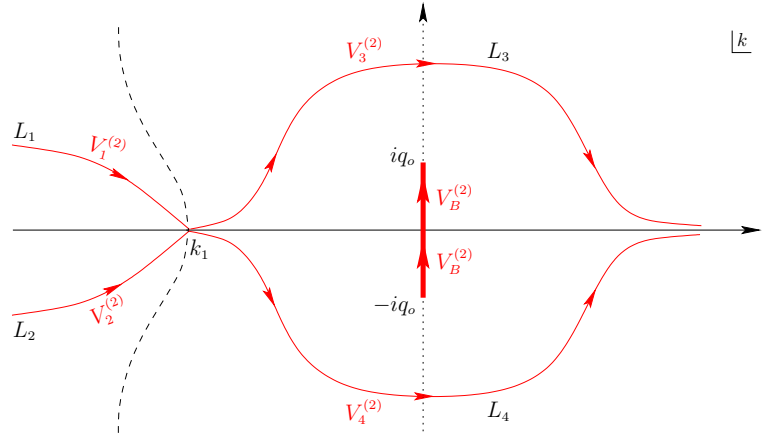}
\caption{The jumps of $M^{(2)}$ in the plane wave region.}
\label{figdef30}
\end{center}
\end{figure}

\begin{figure}
\begin{center}
\includegraphics[scale=1]{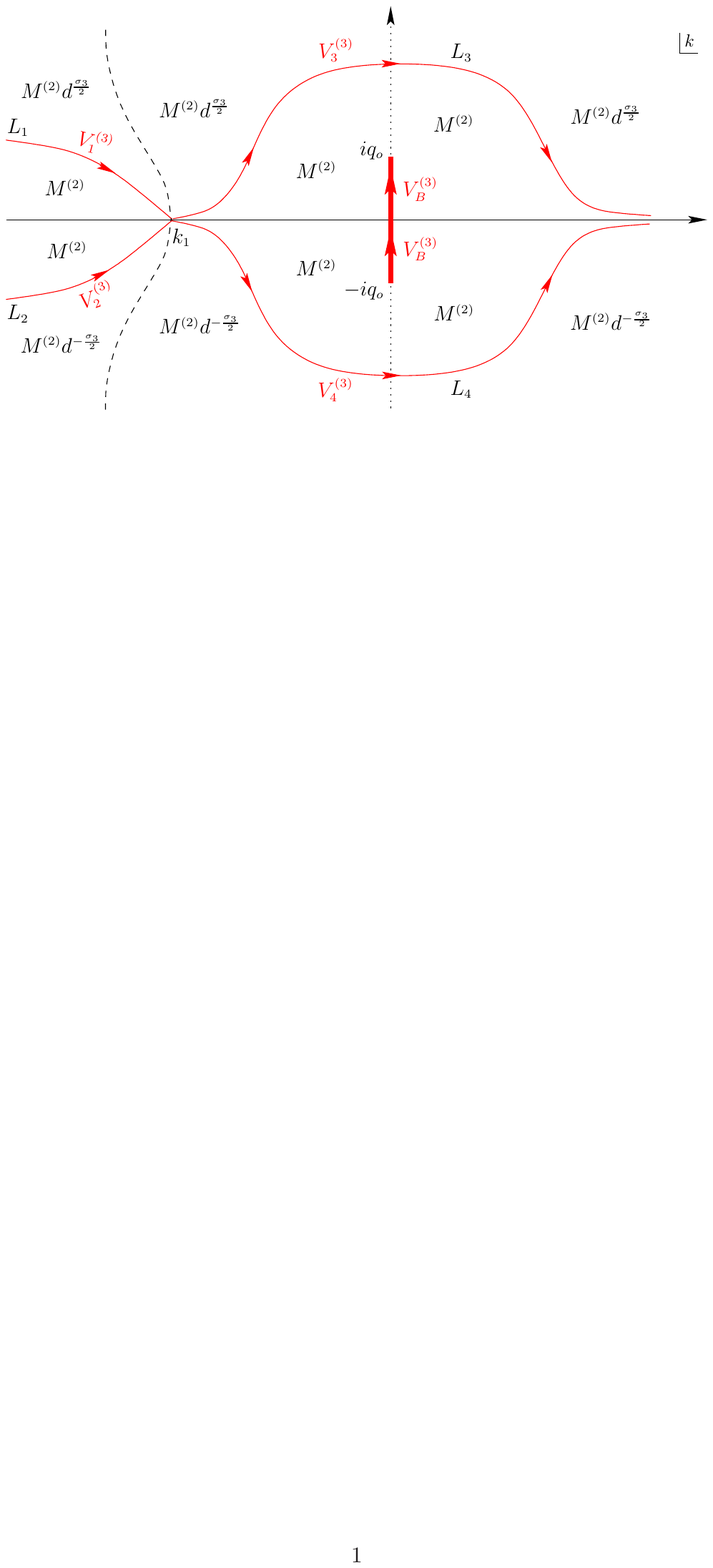}
\caption{The third deformation in the plane wave region.}
\label{figdef3}
\end{center}
\end{figure}

%
%

\paragraph{Third deformation.}
The next step consists in removing the function $d$ from the jump matrices \eqref{jump2b} so that the jumps along the contours $L_j$ eventually tend to the identity as $t\to\infty$.
This is accomplished by switching from $M^{(2)}$ to $M^{(3)}$ according to Figure \ref{figdef3}.
It then follows that
\begin{align*}
&M^{(3)+}(k)
=
M^{(3)-}(k)V_j^{(2)}
d^{\frac{\sigma_3}{2}}, \quad k\in L_j,\ j=1,3,
\\
&M^{(3)+}(k)
=
M^{(3)-}(k) d^{\frac{\sigma_3}{2}}V_j^{(2)}, \quad k\in L_j,\ j=2,4\,.
\end{align*}
Furthermore, the jump $V_B^{(2)}$ of $M^{(2)}$ across $B$, which  does not involve $d$, remains the same for $M^{(3)}$. Overall, $M^{(3)}$ is analytic in 
$\mathbb C\setminus ( \cup_{j=1}^4 L_j \cup B )$ and satisfies the jump conditions
\begin{subequations}\label{rhp3}
\begin{align}
&M^{(3)+}( k) = M^{(3)-}( k)V_B^{(3)},\quad k\in B,
\\
&M^{(3)+}( k) = M^{(3)-}( k)V_j^{(3)},\quad k\in L_j,\ j=1,2,3,4,
\end{align}
and the normalization condition
\begin{equation}
M^{(3)}( k) =I +O(1/k),\quad k \to \infty,
\end{equation}
\end{subequations}
where the jump contours $L_1, L_2, L_3, L_4$ are shown in Figure \ref{figdef3} and
\begin{subequations}\label{jump3}
\begin{equation}
V_B^{(3)} =V_B^{(2)},
\quad
V_1^{(3)}
=
\begin{pmatrix}
1
&
\dfrac{\bar r e^{2i \theta t } }{1+r\bar r}\,\delta^{2}
\\
0
&
1
\end{pmatrix},
\quad
V_2^{(3)}
=
\begin{pmatrix}
1
&
0
\\
\dfrac{r e^{-2i \theta t } }{1+r\bar r}\,\delta^{-2}
&
1
\end{pmatrix},
\end{equation}
\begin{equation}
V_3^{(3)}
=
\begin{pmatrix}
1
&
0
\\
r e^{-2i \theta t } \,\delta^{-2}
&
1
\end{pmatrix},
\quad
V_4^{(3)}
=
\begin{pmatrix}
1
&
\bar r e^{2i \theta t } \,\delta^{2}
\\
0
&
1
\end{pmatrix}.
\end{equation}
\end{subequations}
%
%

%
%
%
\paragraph{Fourth deformation.}
Next, we eliminate the auxiliary function $\delta$  from the jump $V_B^{(3)}$  across the branch cut $B$ 
--- and hence turn this jump into a constant ---
by employing the so-called $g$-function mechanism \cite{{dvz1994}, {dvz1997}}.
More precisely,  we let
\begin{equation}\label{m4def}
M^{(4)}(x,t,k)
=
M^{(3)}(x,t,k)
e^{ig(k)\sigma_3},
\end{equation}
where the yet to be determined scalar function $g$ is analytic in $\mathbb C\setminus B$
and satisfies the discontinuity condition
\begin{equation}\label{jump-g}
 e^{i\left(g^+(k)+g^-(k)\right)}=\delta(k)^2,\quad k\in B.
\end{equation}
It is straightforward to check that if condition \eqref{jump-g} holds then the jump of $M^{(4)}$ across $B$ is constant and equal to
\begin{subequations}\label{m4pw-jumps}
\begin{equation}\label{jump4a}
V_B^{(4)} = V_B =
\begin{pmatrix}
0 & {q_-}/{iq_{o} } \\
{\bar q_-}/{iq_{o} } & 0
\end{pmatrix},\quad k\in B.
\end{equation}
Moreover, since $g$ is analytic away from $B$ no new jumps are introduced. 
The remaining jump matrices in equation~\eqref{jump3} are changed into
\begin{equation}\label{jump4b}
V_1^{(4)}
=
\begin{pmatrix}
1 & \dfrac{\bar r e^{2i(\theta t -g)} }{1+r\bar r}\,\delta^{2}
\\
0 & 1
\end{pmatrix},
\quad
V_2^{(4)}
=
\begin{pmatrix}
1 & 0 \\
\dfrac{r e^{-2i(\theta t -g)} }{1+r\bar r}\,\delta^{-2} & 1
\end{pmatrix},
\end{equation}
\begin{equation}\label{jump4c}
V_3^{(4)}
=
\begin{pmatrix}
1
&
0
\\
r e^{-2i(\theta t -g)} \delta^{-2}
&
1
\end{pmatrix},
\quad
V_4^{(4)}
=
\begin{pmatrix}
1
&
\bar r e^{2i(\theta t -g)}  \delta^{2}
\\
0
&
1
\end{pmatrix},
\end{equation}
\end{subequations}

The  function $g$  can be determined explicitly as follows. 
Dividing  condition \eqref{jump-g} by $\lambda$  and using  equation \eqref{deldef} for $\delta$,  we deduce that the scalar function $g/\lambda=(g/\lambda)(k)$ is analytic in $\mathbb C\setminus B$ and satisfies the jump condition
\begin{subequations}\label{rhpg}
\begin{equation}
\left(\frac{g}{\lambda}\right)^+(k)  -
\left(\frac{g}{\lambda}\right)^-(k)
=
\frac{1}{\pi \lambda}
\int_{-\infty}^{k_1}
\frac{\ln\!\big[1+r(\nu )\bar r(\nu )\big]}{\nu -k}\, d\nu , \quad k\in B,
\end{equation}
and the normalization condition
\begin{equation}
\frac{g}{\lambda}(k)
=
O(1/k), \quad k \to\infty.
\end{equation}
\end{subequations}
Plemelj's formulae then yield $g$ in the explicit form
\begin{equation}\label{gdef}
g(k)
=
\frac{\lambda(k)}{2i\pi^2}
\int_{\zeta\in B}
\frac{1}{\lambda(\zeta)(\zeta-k)}
\int_{-\infty}^{k_1}
\frac{\ln\!\big[1+r(\nu )\bar r(\nu )\big]}{\nu -\zeta}\, d\nu 
d\zeta,\quad k\notin B.
\end{equation}
In summary, the function $M^{(4)}$  defined by equation \eqref{m4def}, with $g$ given by equation \eqref{gdef}, satisfies the following
Riemann-Hilbert problem:
\begin{rhp}[Final problem in plane wave region]\label{rhp4}
Determine a sectionally analytic matrix-valued function $M^{(4)}(k)=M^{(4)}(x, t, k)$ in $\mathbb C\setminus ( \cup_{j=1}^4 L_j \cup B)$ satisfying the jump conditions
\begin{subequations}\label{rhp44}
\begin{align}
&M^{(4)+}(k) = M^{(4)-}( k) V_B,\hskip .6cm k\in  B,
\\
&M^{(4)+}(k) = M^{(4)-}( k) V_j^{(4)},\quad k\in L_j,\ j=1,2,3,4,
\end{align}
and the normalization condition
\begin{equation}
M^{(4)}( k) =\left[I +O(1/k)\right]e^{ig_\infty \sigma_3},\quad k \to \infty,
\end{equation}
\end{subequations}
where  the jump contours $L_j$ are shown in Figure \ref{figdef3}, the jump matrices are given by equations \eqref{m4pw-jumps},  and the real constant $g_\infty$ is given by
\begin{equation}\label{ginfdef}
g_\infty 
=
-\frac{1}{2i\pi^2}
\int_{\zeta\in B}
\frac{1}{\lambda(\zeta)}
\int_{-\infty}^{k_1}
\frac{\ln\!\big[1+r(\nu )\bar r(\nu )\big]}{\nu -\zeta}\, d\nu 
d\zeta.
\end{equation}
\end{rhp}

%
%
%
%
%
\paragraph{Decomposition of the final Riemann-Hilbert problem and limit $t\to \infty$ .}
Starting from formula \eqref{qsol} for the solution $q$ of the  focusing NLS equation~\eqref{e:NLS} in terms of $M^{(0)}$ and applying the four successive deformations that lead to $M^{(4)}$, we obtain 
\begin{equation}\label{qsolm4}
q(x, t)
=
-2i \big(M_1^{(4)}(x, t)\big)_{12}e^{ig_\infty },
\end{equation}
where $M_1^{(4)}$ is the $O(1/k )$ coefficient of the large-$k$ expansion of $M^{(4)}$:
\begin{equation}
M^{(4)}(x, t, k) = e^{ig_\infty \sigma_3} + \frac{M_1^{(4)}(x, t)}{k}+O \Big(\frac{1}{k^2}\Big),\quad k \to \infty.
\end{equation}
In order to be able to take the limit $t\to \infty$ in   formula \eqref{qsolm4}, a suitable decomposition of $M^{(4)}$ is first required
into an asymptotic problem, which will yield the leading-order contribution to the solution of the NLS equation, 
and an error problem, which will yield the leading-order error.

%
%
%
%
%
%
It is evident from the structure of the jump matrices $V_1^{(4)}$, $V_2^{(4)}$, $V_3^{(4)}$ and $V_4^{(4)}$ in equations~\eqref{jump4b} and~\eqref{jump4c} 
and the sign structure of $\Re(i\theta)$ (see Figure \ref{f:imtheta}\textcolor{blue}{a}) that $k_1$ is the only point at which 
the above matrices fail to tend uniformly to $I$ as $t\to \infty$.
Hence, a neighborhood of the point $k_1$  
--- in addition, of course, to the branch cut $B$ ---
is the only region expected to yield the leading-order contribution to the long-time asymptotics of $M^{(4)}$, 
while the remaining contours are expected to contribute only in the error. 
This reasoning motivates the following 
decomposition of the Riemann-Hilbert problem~\ref{rhp4}:

Let 
$D_{k_1}^{\varepsilon}$ be a disk of radius $\varepsilon $ centered at $k_1$, with $\varepsilon $ sufficiently small so that $D_{k_1}^\varepsilon  \cap B =\varnothing$. Then,  write
\begin{equation}\label{m4ea}
M^{(4)}
=
M^\err M^\asymp,
\quad 
M^\asymp
=
\begin{cases}
M^B, \quad k\in \mathbb C\setminus D_{k_1}^{\varepsilon },
\\
M^D, \quad k\in D_{k_1}^{\varepsilon },
\end{cases}
\end{equation}
where:
\begin{itemize}
\item
the function $M^B(k)=M^B(x, t, k)$ is analytic in $\mathbb C\setminus B$ and satisfies the jump condition
\begin{subequations}\label{rhpB}
\begin{equation}
M^{B+}(k) = M^{B-}(k) \,
V_B, \quad k\in B,
\label{rhpBb}
\end{equation}
and the normalization condition
\begin{equation}
M^B(k) =  \left[I +O(1/k)\right]e^{ig_\infty \sigma_3},\quad k \to \infty,
\end{equation}
\end{subequations}
\item
the function $M^D$ is analytic  in $D_{k_1}^{\varepsilon }\setminus L_j$, $j=1,2,3,4,$ with jumps
\begin{equation}\label{rhpP}
M^{D+}(k)
=
M^{D-}(k)
V_j^{(4)},
\quad
k\in \hat L_j = L_j\cap D_{k_1}^{\varepsilon },\ j=1, 2, 3, 4, 
\end{equation}
\item
 the function $M^\err(k)=M^\err(x, t, k)$ is analytic in $\mathbb C
\setminus ( \cup_{j=1}^4 \check L_j \cup  \p D_{k_1}^{\varepsilon})$ and satisfies the jump condition
\begin{subequations}\label{rhpE}
\begin{equation}
M^{\err+}(k) = M^{\err-}(k) \,
V^\err,\quad  k\in \cup_{j=1}^4 \check L_j \cup  \p D_{k_1}^{\varepsilon},
\quad
\check L_j= L_j \setminus \overline{D_{k_1}^\varepsilon},
\label{rhpEb}
\end{equation}
and the normalization condition
\begin{equation}
M^\err(k) = I +O(1/k),\quad k \to \infty,
\end{equation}
\end{subequations}
with  the jump $V^\err$ defined by
\begin{equation}\label{VEdef}
V^\err
=
\begin{cases}
M^B V_j^{(4)} (M^B)^{-1}, &k\in \check L_j,
\\
M^{\asymp-}(V_D^\asymp)^{-1}(M^{\asymp-})^{-1}, &k\in \p D_{k_1}^\varepsilon,
\end{cases}
\end{equation}
where  $V_D^\asymp$ is the  jump of $M^\asymp$ across the circle $\p D_{k_1}^\varepsilon$, which is yet unknown. 
\end{itemize}

Under the decomposition \eqref{m4ea} of $M^{(4)}$, formula \eqref{qsolm4} becomes
\begin{equation}\label{qsolmea}
q(x, t)
=
-2i \left(M_1^{B}(x, t)e^{ig_\infty }+M_1^\err(x, t)\right)_{12}.
\end{equation}
Moreover, thanks to the fact that the jump matrix $V_B$ is constant, 
$M^B$ is  actually  given by the explicit formula
\begin{equation}\label{mlsol}
M^B(k)
=
e^{ig_\infty \sigma_3}
\begin{pmatrix}
\dfrac{1}{2}\left(\Lambda+\Lambda^{-1}\right)
&
-\dfrac{q_{o} }{2\bar q_-}\left(\Lambda-\Lambda^{-1}\right)
\\
\dfrac{q_{o} }{2q_-}\,\overline{\left( \Lambda- \Lambda^{-1}\right)}
&
\dfrac{1}{2}\, \overline{\left(\Lambda+ \Lambda^{-1}\right)}
\end{pmatrix},
\end{equation}
where the function $\Lambda$ is defined by
\begin{equation}\label{Lamdef}
\Lambda(k)  = \left(\frac{k-iq_{o} }{k+iq_{o} }\right)^{\frac 14}.
\end{equation}
In addition, in the Appendix we show that $M_1^\err$ admits the following estimate:
\begin{equation}\label{m1e-est1}
\left| M_1^\err(x, t)\right|
=
O\big(t^{-\frac 12}\big),
\quad t\to \infty.
\end{equation}
Thus, returning to equation \eqref{qsolmea} we conclude that
 the  long-time asymptotic behavior of the solution of the  focusing NLS equation \eqref{e:NLS} in the plane wave region is  given  to leading order by
\begin{equation*}
q(x, t)
=
-2i \left(M_1^{B}(x, t)\right)_{12}
e^{ig_\infty }+O\big(t^{-\frac 12}\big),
\quad  t\to \infty,
\end{equation*}
or, explicitly,
\begin{equation}\label{qsol-genus0}
q(x,t) =
e^{2i g_\infty }q_-
+O\big(t^{-\frac 12}\big),
\quad t\to \infty.
\end{equation}
Note that  the definitions \eqref{kroots} and \eqref{ginfdef} of $k_1$ and $g_\infty $ imply that $\lim_{\xi\to -\infty}g_\infty =0$. Hence, in consistency 
with the infinity condition \eqref{e:NZBC}, in the plane wave region  
as $t\to \infty$ we have
$q(x, t) 
\xrightarrow{\hspace*{.3cm}}  q_-$ 
in the limit $x\to -\infty$, $\xi\to -\infty$. 

The proof of Theorem \ref{pw-t} is complete.


\section{The modulated elliptic wave region: proof of theorem \ref{mew-t}}\label{mew-reg}
\label{mew-sec}

In this section we prove Theorem \ref{mew-t}, i.e., we compute the leading order long-time asymptotic behavior of the solution of the  focusing NLS equation \eqref{e:NLS} in the modulated elliptic wave region $|x|<4\sqrt 2 q_{o}  t$. 

Recall that the sign structure of $\Re(i\theta)$ in this region  was discussed in Section \ref{pw-reg} and is depicted in Figures~\ref{f:imtheta}\textcolor{blue}{b} and \ref{f:imtheta}\textcolor{blue}{c}.
As in the plane wave region, we consider only the case $-4\sqrt2q_{o} t<x<0$ since the case $0<x<4\sqrt2q_{o} t$ is entirely analogous after suitably reformulating the Riemann-Hilbert problem~\ref{rhp0} (see discussion below Remark \ref{xpm-rem}).

\begin{remark}
The main difference between the plane wave and the modulated elliptic wave regions is the absence of real stationary points in the latter case. 
As a result, the curves identifying the sign changes of $\Re(i\theta)$ in Figure~\ref{f:imtheta}\textcolor{blue}{b} as $k\to\infty$ connects directly to the branch cut $B$. 
This implies that it is not possible anymore to use the previous factorizations and deformations to lift the contours off the real $k$-axis in such a way that the corresponding jump matrices remain bounded as $t\to\infty$.
In other words, in order to connect the negative real $k$-axis to the white region in the upper half-plane, one cannot avoid passing through the gray region,
in which $\Re(i\theta)$ has the ``wrong'' sign.
To circumvent this problem, 
similarly to \cite{bv2007}
we will introduce an appropriate, artificial change-of-factorization point $k_{o} \in\Real^-$, 
which will be determined as part of the problem.
\end{remark}

%
%
%
%
%
\paragraph{First, second and third deformations.}
The first deformation, which is defined in Figure \ref{defr1}, is the same as the first deformation in the plane wave region, except that the change of factorization now occurs  at the point $k_{o} $ instead of the point $k_1$.
The second and third deformations are also the same as the corresponding deformations in the plane wave region and lead to the function $M^{(3)}(k)=M^{(3)}(x, t, k)$ which is analytic in 
$\mathbb C\setminus( \cup_{j=1}^4 L_j \cup  B)$ and satisfies the jump conditions
\begin{subequations}\label{rhp3r}
\begin{align}
&M^{(3)+}(k) = M^{(3)-}(k) V_B^{(3)},\quad k\in B,
\\
&M^{(3)+}(k) = M^{(3)-}(k) V_j^{(3)},\quad k\in L_j,\ j=1,2,3,4,
\label{m3jumpr}
\end{align}
and the normalization condition
\begin{equation}
M^{(3)}(k) =I +O(1/k),\quad k \to \infty,
\end{equation}
\end{subequations}
with the jump contours $L_j$  shown in Figure \ref{defr2} and the   jump matrices $V_j^{(3)}$  given by equations \eqref{jump3} after changing the definition of the function $\delta$ to
\begin{equation}\label{deldefr}
\delta(k)
=
\exp
\left\{
\frac{1}{2i\pi}
\int_{-\infty}^{k_{o} }
\frac{\ln\!\big[1+r(\nu )\bar r(\nu )\big]}{\nu -k}\, d\nu 
\right\},\quad k\notin(-\infty, k_{o} ).
\end{equation}

\begin{figure}[b!]
\begin{center}
\includegraphics[scale=1]{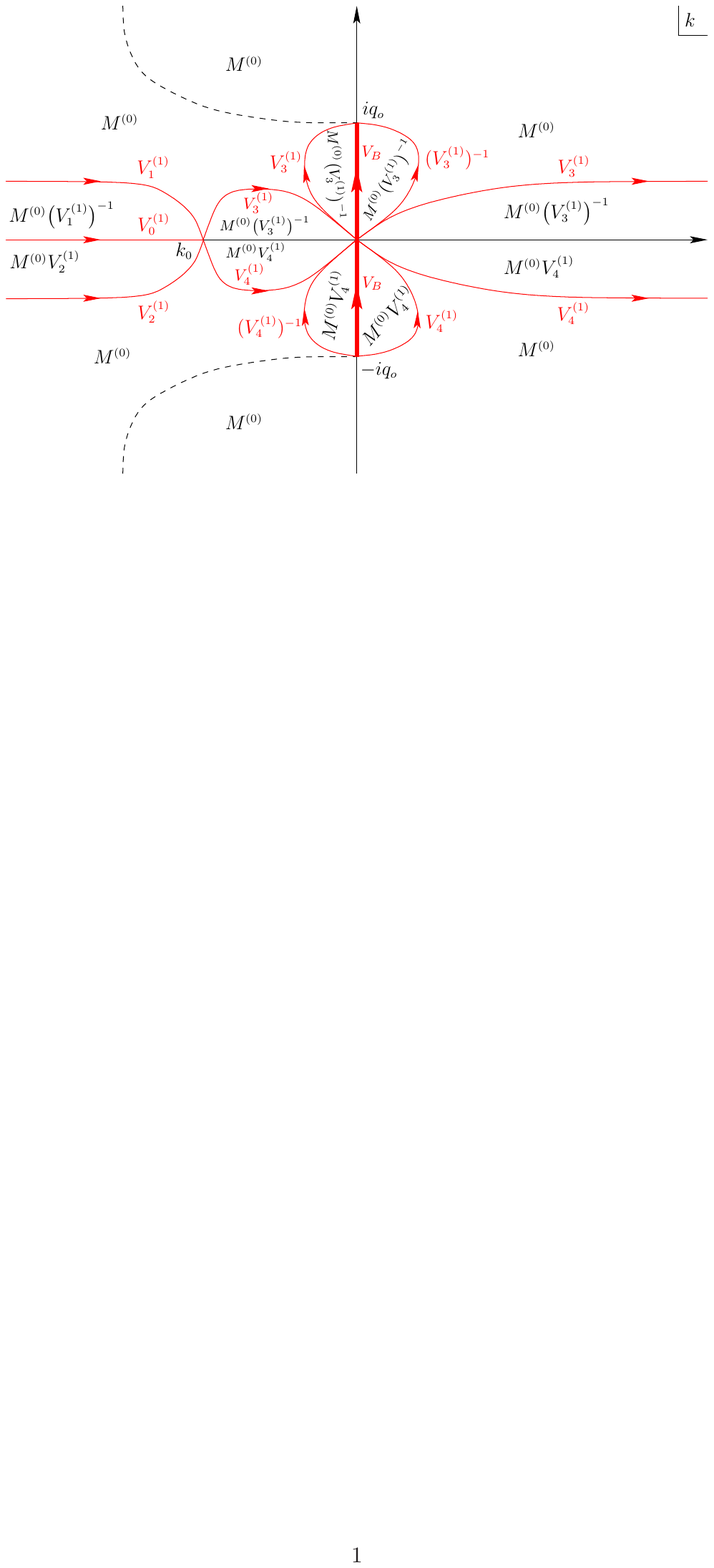}
\caption{The first deformation in the modulated elliptic wave region.}
\label{defr1}
\end{center}
\end{figure}
\begin{figure}[t!]
\begin{center}
\includegraphics[scale=1]{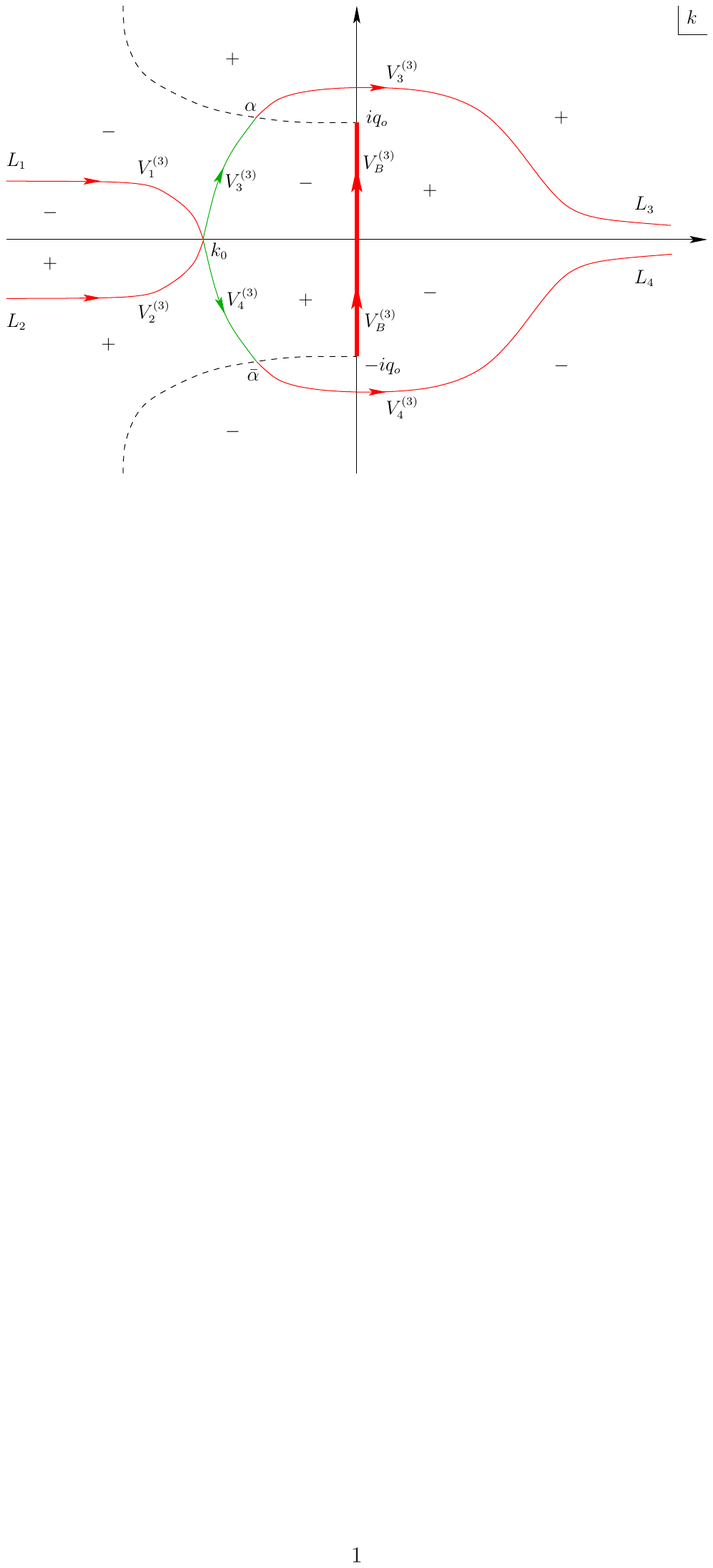}
\caption{The jumps of $M^{(3)}$ in the modulated elliptic wave region. 
The jump matrices $V_3^{(3)}$ and $V_4^{(3)}$  grow exponentially as a function of $t$ on the green-colored contours.}
\label{defr2}
\end{center}
\end{figure}

\begin{remark}
Importantly, the contour $L_3$ now intersects the curve $\Re(i\theta)=0$ in the second quadrant at a point $\alpha$,
as depicted in Figure \ref{defr2},
unlike what happens in the plane wave region.
This intersection point will be determined in terms of $k_{o} $ in due course.
\end{remark}

%
%
%
%
%
\paragraph{Fourth deformation: Elimination of the exponential growth.}
So far we have managed to deform the jump contour across $\mathbb R$ to contours in the complex $k$-plane as in the plane wave region. 
We now encounter a new phenomenon which is not present in the plane wave region, however:
the jumps $V_3^{(3)}$ and $V_4^{(3)}$ grow exponentially with $t$ along the green-colored segments of the deformed contours  shown 
in Figure~\ref{defr2}.
To address this issue, we employ the following new factorizations for these jumps:
\begin{equation}\label{gr-facs}
V_3^{(3)}
=
V_5^{(3)} V_7^{(3)} V_5^{(3)},
\qquad
V_4^{(3)}
=
V_6^{(3)} V_8^{(3)} V_6^{(3)},
\end{equation}
where
\begin{subequations}\label{v4r}
\begin{equation}
V_5^{(3)}
=
\begin{pmatrix}
1
&
\dfrac{\delta^{2}}{r}\, e^{2i \theta t } 
\\
0
&
1
\end{pmatrix},
\quad
V_6^{(3)}
=
\begin{pmatrix}
1
&
0
\\
\dfrac{1}{ \bar r\delta^{2}}\, e^{-2i \theta t } 
&
1
\end{pmatrix},
\end{equation}
\begin{equation}
V_7^{(3)}
=
\begin{pmatrix}
0
&
-\dfrac{\delta^{2}}{r}\, e^{2i \theta t } 
\\
\dfrac{r}{\delta^{2}}\, e^{-2i \theta t } 
&
0
\end{pmatrix},
\quad
V_8^{(3)}
=
\begin{pmatrix}
0
&
\bar r\delta^{2}\, e^{2i \theta t } 
\\
-\dfrac{1}{\bar r \delta^{2}}\, e^{-2i \theta t } 
&
0
\end{pmatrix}.
\end{equation}
\end{subequations}
\begin{figure}[t!]
\begin{center}
\includegraphics[scale=1]{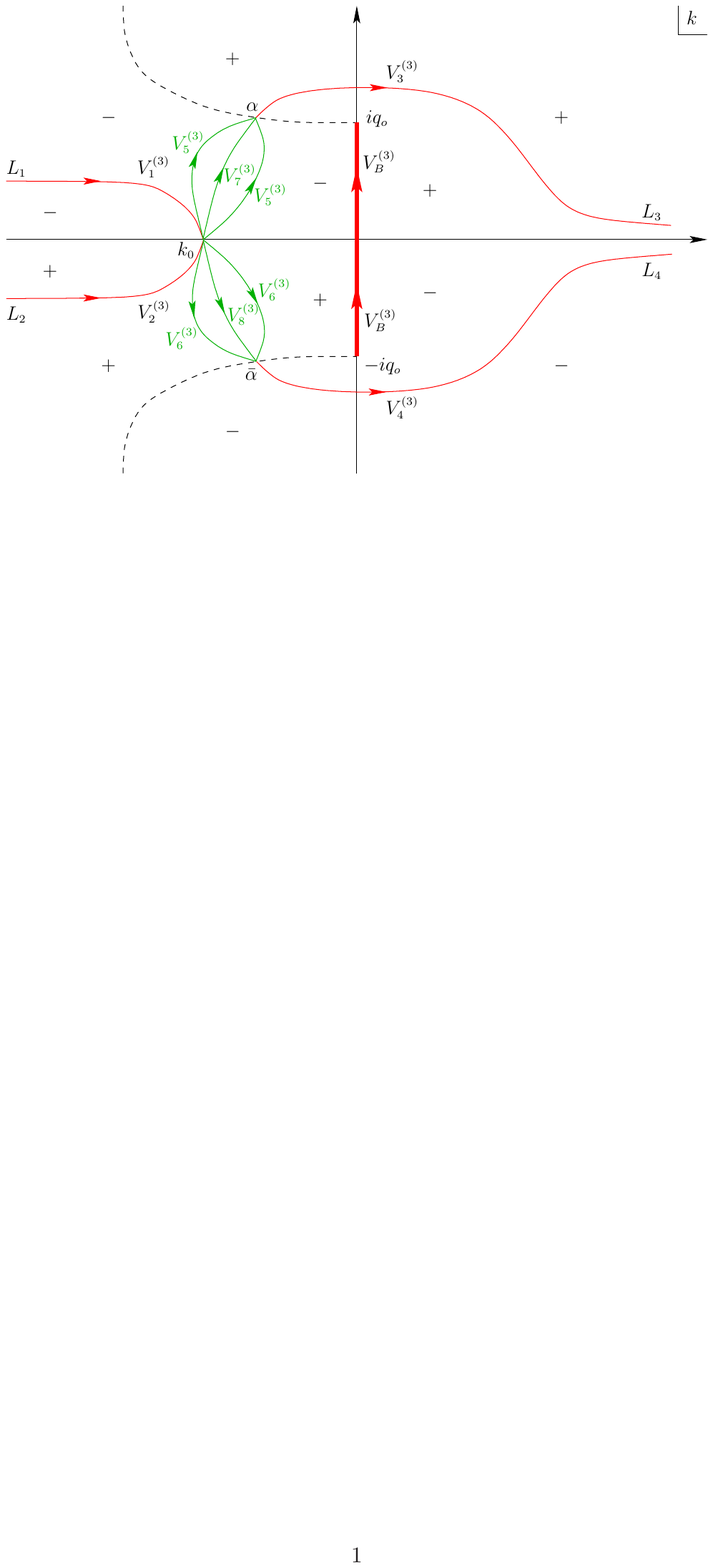}
\caption{The fourth deformation in the modulated elliptic wave region.}
\label{defr3}
\end{center}
\end{figure}
These factorizations allow for the deformation of the green  contours of Figure \ref{defr2} into the green contours shown in Figure \ref{defr3}, 
where for $j=1,\ldots,8$ we denote by $V_j^{(3)}$ the jump associated with the contour $L_j$.

Observe that the jumps $V_5^{(3)}$ and $V_6^{(3)}$ 
are now bounded along the contours $L_5$ and $L_6$ respectively.
On the other hand, the $21$-entry of the jump $V_7^{(3)}$ and the $12$-entry of the jump $V_8^{(3)}$  are still unbounded along the contours $L_7$ and $L_8$ respectively.
To resolve this issue, we employ once again the $g$-function mechanism as in the previous section (cf.\ transformation \eqref{m4def}).
This time, however, we need to do so through a $t$\textit{-dependent} exponential, 
namely, by letting
\begin{equation}\label{m4r}
M^{(4)}(x, t, k)
=
M^{(3)}(x, t, k) e^{-iG(\xi, \alpha, k_{o}, k)t \sigma_3},
\end{equation}
for a function $G(\xi, \alpha, k_{o}, k)$ which is required to be analytic in $\mathbb C\setminus (B\cup L_7\cup L_8)$.
In fact, instead of working with $G$ it will be more convenient to consider the function $h$ defined by
\begin{equation}\label{hdef}
h(\xi, \alpha, k_{o}, k)
=
\theta(\xi, k) + G(\xi, \alpha, k_{o}, k).
\end{equation}
From the above definition, we infer  that $h$ must be analytic in $\mathbb C\setminus(B\cup L_7\cup L_8)$ and has jump
discontinuities across $B$ and $\tilde B$.
%
%
Moreover, according to transformation \eqref{m4r}
the jumps of $M^{(4)}$ read  
\begin{align}\label{jumph}
&
V_B^{(4)}
=
\begin{pmatrix}
0
&
\dfrac{q_-\delta^{2}}{iq_{o} }\, e^{i(h^++h^-)t}
\\
\dfrac{\bar q_-}{iq_{o} \delta^{2}}\, e^{-i(h^++h^-)t}
&
0
\end{pmatrix},
\quad
V_1^{(4)}
=
\begin{pmatrix} 
1 & \dfrac{\bar r\,\delta^{2} }{1+r\bar r}\,e^{2i h t} \\
0 & 1
\end{pmatrix},
\nonumber\\
&
V_2^{(4)}
=
\begin{pmatrix}
1 & 0 \\
\dfrac{r }{\left(1+r\bar r\right) \delta^{2}}\,e^{-2i h t} & 1
\end{pmatrix}, 
\quad
V_3^{(4)}
=
\begin{pmatrix}
1 & 0 \\
\dfrac{r}{ \delta^2}\, e^{-2i h t} & 1
\end{pmatrix}, 
\quad
V_4^{(4)}
=
\begin{pmatrix}
1 & \bar r \delta^{2}\, e^{2i h t} \\
0 & 1
\end{pmatrix},
\nonumber\\
&
V_5^{(4)}
=
\begin{pmatrix}
1
&
\dfrac{\delta^{2}}{r}\, e^{2i h t} 
\\
0
&
1
\end{pmatrix},
\quad
V_6^{(4)}
=
\begin{pmatrix}
1
&
0
\\
\dfrac{1}{ \bar r\delta^{2}}\, e^{-2i h t} 
&
1
\end{pmatrix},
\quad
V_7^{(4)}
=
\begin{pmatrix}
0
&
-\dfrac{\delta^{2}}{r}\, e^{i(h^++h^-)t} 
\\
\dfrac{r}{\delta^{2}}\, e^{-i(h^++h^-)t} 
&
0
\end{pmatrix},
\nonumber\\
&
V_8^{(4)}
=
\begin{pmatrix}
0
&
\bar r\delta^{2}\, e^{i(h^++h^-)t} 
\\
-\dfrac{1}{\bar r \delta^{2}}\, e^{-i(h^++h^-)t} 
&
0
\end{pmatrix}.
\end{align}
The task which we turn to next
is then to determine $k_{o} $, $\alpha = \alpha_\re + i\alpha_\im$ and $h$ so that all these jump matrices remain bounded as $t\to\infty$.

\paragraph{The definition of $h$.}
We begin by introducing the upwardly oriented branch cut 
%
$\tilde B = L_7\cup \left(-L_8\right)$,
%
and we define the single-valued function $\gamma$ with branch cuts $B$ and $\tilde B$ by
\begin{equation}\label{gammadef}
\gamma(k)
=
\left[\left(k^2+q_{o} ^2\right)\left(k-\alpha\right)\left(k-\bar \alpha\right)\right]^{\frac 12},
\end{equation}
where we identify $\gamma$  with its  right-sided limit along  $B$ and $\tilde B$,  i.e., we set $\gamma(k) = \gamma^-(k) = -\gamma^+(k)$ for $k\in B\cup \tilde B $, so that  $\gamma(k) \sim k^2$ as $k \to \infty$.  
Then $\gamma$ gives rise to a genus-1 Riemann surface $\Upsigma$ with sheets $\Upsigma_1, \Upsigma_2$ and a basis $\{ \upalpha, \upbeta\}$ of cycles defined as follows: the $\upbeta$-cycle is  a closed, anticlockwise contour around the branch cut $B$ which remains entirely on the first sheet $\Upsigma_1$ of the Riemann surface; the $\upalpha$-cycle consists of an anticlockwise contour that starts on the left of $\tilde B $, then approaches $B$ from the right while on the first sheet $\Upsigma_1$, and finally returns to the starting point via the second sheet $\Upsigma_2$. 
These cycles are depicted in Figure \ref{abcycles}~(left).

\begin{figure}[t!]
\hskip 2cm
\begin{subfigure}{0.45\textwidth}
\begin{center}
\includegraphics[scale=.45]{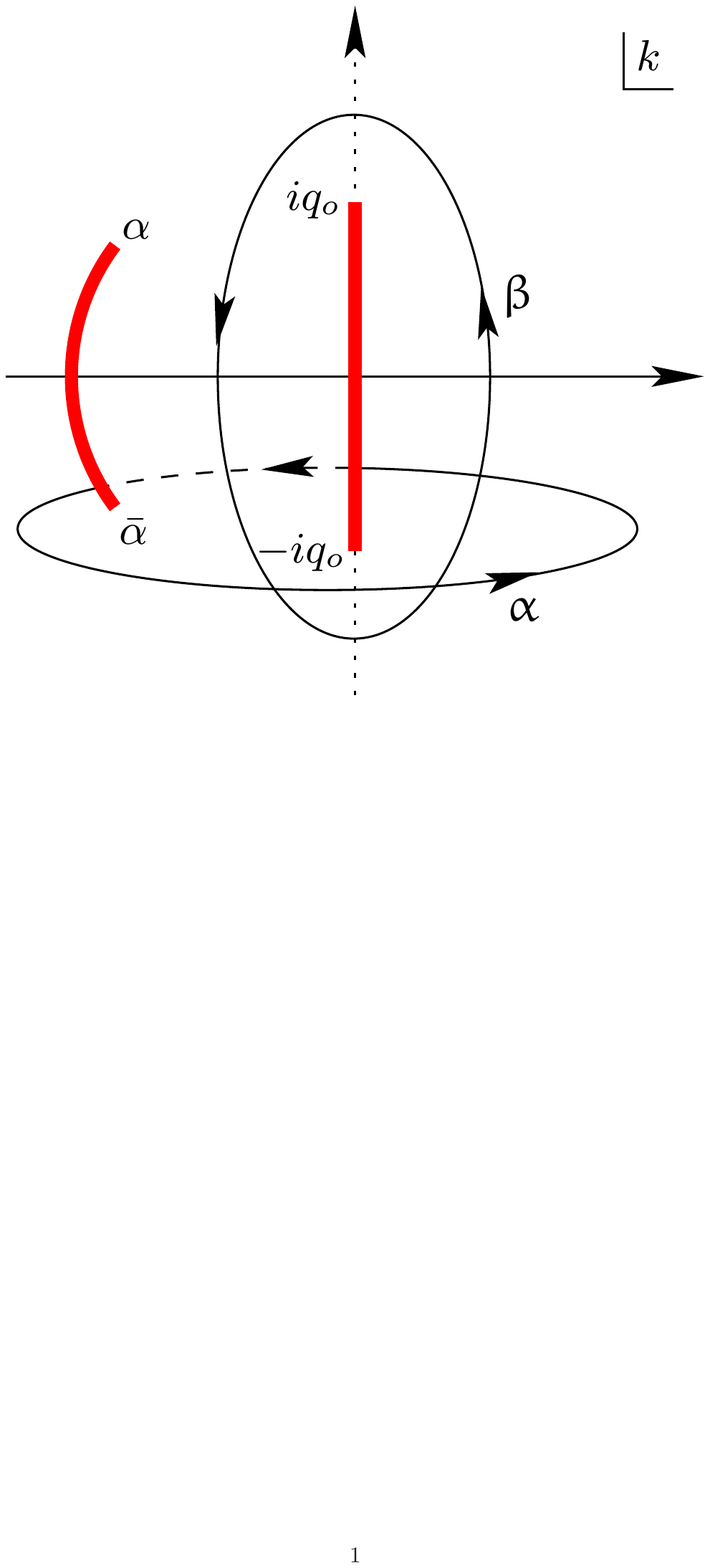}
\end{center}
\end{subfigure}
\hskip -1.2cm
\begin{subfigure}{0.45\textwidth}
\begin{center}
\includegraphics[scale=.23]{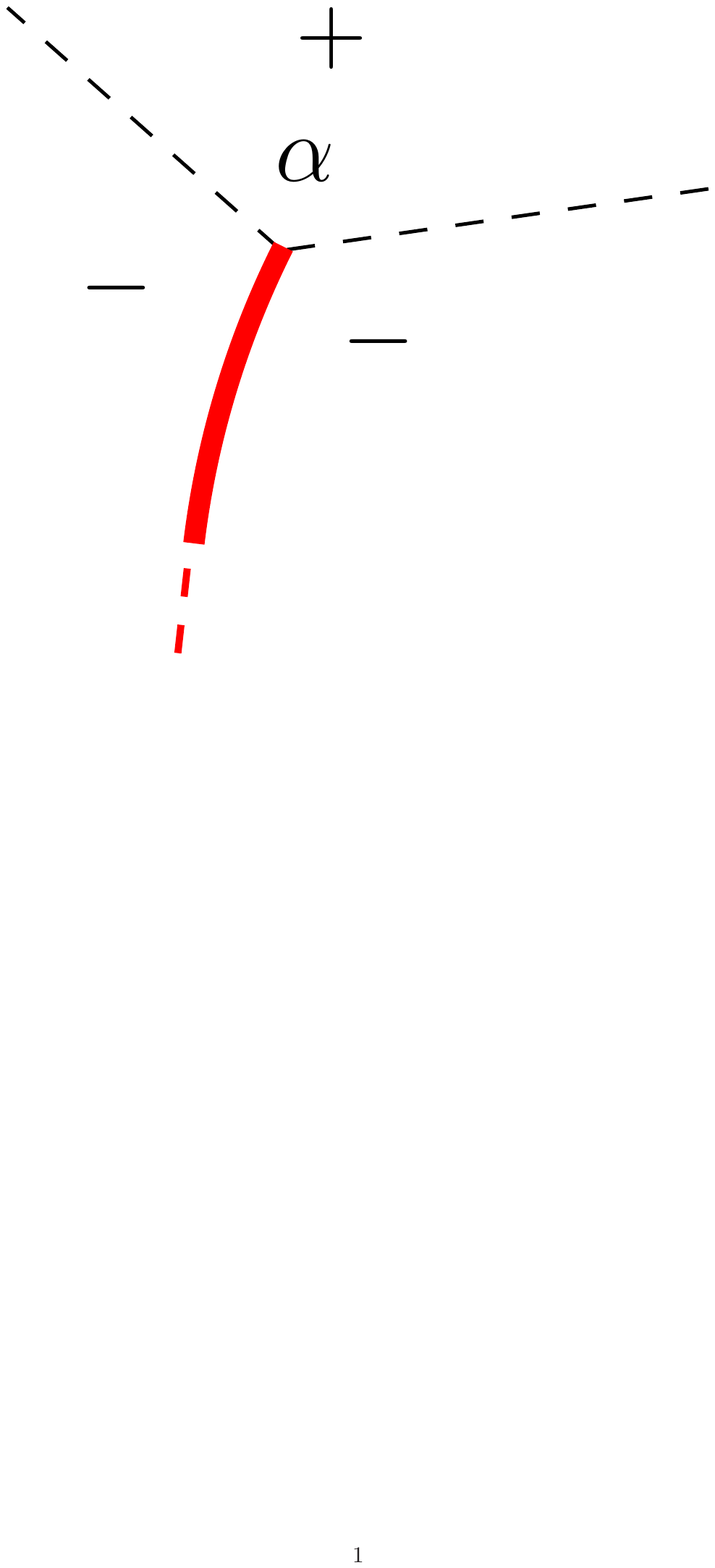}
\end{center}
\end{subfigure}
\caption{
Left: The basis $\{\upalpha, \upbeta\}$ of cycles of the genus-1 Riemann surface $\Upsigma$.
Right: The desired sign structure of $\Re(ih)$ in a neighborhood of $\alpha$
(cf.\ Fig.~\ref{defr3}).}
\label{abcycles}
\end{figure}

Next, similarly to~\cite{bks2011}, we let $h$ be given by the Abelian integral
\begin{equation}\label{habel}
h(k)
=
\frac 12 \left(\int_{iq_{o} }^k+\int_{-iq_{o} }^k \right)dh(z),
\end{equation}
where the Abelian differential $dh$ is defined by
\begin{equation}\label{dhabel}
dh(k)
=
-4\,\frac{\left(k-k_{o} \right)\left(k-\alpha\right)\left(k-\bar \alpha\right)}{\gamma(k)}\, dk.
\end{equation} 
The task is therefore to determine the triplet $\left\{k_{o}, \alpha_\re, \alpha_\im\right\}$ so that:
(a)~the function $h$ defined by equation \eqref{habel} 
satisfies a Riemann-Hilbert problem 
that removes the existing growth from the jumps $V_7^{(4)}$ and $V_8^{(4)}$ of~\eqref{jumph}, 
and 
(b)~the sign structure of $\Re(ih)$ is such that no new growth is introduced in any of the other jumps of~\eqref{jumph}.

\paragraph{The sign structure of $\Re(ih)$.}
%
It is convenient to write
\begin{equation}\label{guant}
\left(k-k_{o} \right)\left(k-\alpha\right)\left(k-\bar \alpha\right)
=
k^3+c_2k^2+c_1 k+c_0,
\end{equation}
where the real constants $c_0, c_1, c_2$ are given by
\begin{equation}\label{cdef}
c_0=-k_{o} \left(\alpha_\re^2+\alpha_\im^2\right),
\quad
c_1=\alpha_\re^2+2k_{o} \alpha_\re+\alpha_\im^2,
\quad
c_2=-\left(k_{o} +2\alpha_\re\right).
\end{equation}
We then have
\begin{equation}\label{habel2}
h(k)
=
-2 \left(\int_{iq_{o} }^k+\int_{-iq_{o} }^k \right)
\frac{z^3+c_2z^2+c_1 z+c_0}{\gamma(z)}\, dz.
\end{equation}
We next use this representation to study the sign structure of $\Re(ih)$ for all $k\in\Complex$.
Note that, since $h$ is completely specified in terms of $k_{o} $ and $\alpha$ by~\eref{habel} and~\eref{dhabel}, 
determining $k_{o} $ and $\alpha$ is equivalent to determining $c_0$, $c_1$ and $c_2$.

We start by studying the sign structure of $\Re(ih)$ as $k\to\infty$.
Recall that the deformations of the Riemann-Hilbert problem were chosen according to the sign structure of $\Re(i\theta)$.
Thus, in the transition from $\theta$ to $h$, this structure should be preserved.
We can ensure that this is the case by requiring 
\begin{equation}\label{thetahas}
\Re(ih)
=
\Re(i\theta)
+
O(1/k),\quad k \to \infty.
\end{equation}
In this regard, note that since
\begin{equation*}
\gamma(k)
=
k^2
\bigg[
1
-\frac{\alpha+\bar \alpha}{2k}
+\frac{4q_{o} ^2-\left(\alpha-\bar \alpha\right)^2}{8k^2}
+O\Big(\frac{1}{k^3}\Big)
\bigg], \quad k \to\infty,
\end{equation*}
it follows that
\begin{equation}
\frac{dh}{dk}
=
-4
\bigg[
k
+(c_2+\alpha_\re)
+
\frac{c_1+c_2 \alpha_\re-\frac 12\left(q_{o} ^2+\alpha_\im^2\right)
+\alpha_\re^2}{k}
+O\Big(\frac{1}{k^2}\Big)
\bigg], \quad k \to \infty.\label{dhas0}
\end{equation}
On the other hand, from the definition \eqref{thet} of $\theta$ we have
\begin{equation}\label{thetas}
\theta(k)
=
-2k^2+\xi k-q_{o} ^2+O\Big(\frac{1}{k}\Big),\quad k \to \infty.
\end{equation}
Hence, to satisfy relation \eqref{thetahas}   we require 
\begin{equation}\label{c1c2cond}
c_1
=
\frac 12\left(q_{o} ^2+\alpha_\im^2\right)
+\frac{\xi}{4}\alpha_\re,
\quad
c_2=-\frac{\xi}{4}-\alpha_\re.
\end{equation}
Then, integrating expansion \eqref{dhas0} we  find
\begin{equation}\label{has}
h(k)
=
-2k^2+\xi k+ H_{o} +O\Big(\frac 1k \Big),\quad k \to \infty,
\end{equation}
where the constant $H_{o} $ can be determined by observing that 
\begin{equation}\label{asobs}
2\left(\int_{iq_{o} }^k+\int_{-iq_{o} }^k\right)\left(z-\frac \xi 4\right)dz
=
2k^2-\xi k+2q_{o} ^2.
\end{equation}
Indeed, combining equations \eqref{habel2} and \eqref{asobs} we can express $h$ in the form
\begin{equation*}
h(k)
=
-2 \left(\int_{iq_{o} }^k+\int_{-iq_{o} }^k \right)
\left[
\frac{z^3+c_2z^2+c_1 z+c_0}{\gamma(z)}
-
\left(z-\frac \xi 4\right)
\right] dz
-\left(2k^2-\xi k +2q_{o} ^2\right).
\end{equation*}
Hence, using also equation \eqref{has} we deduce
\begin{equation}\label{omtil}
H_{o}   
=
-2 \left(\int_{iq_{o} }^\infty+\int_{-iq_{o} }^\infty \right)
\left[
\frac{z^3+c_2z^2+c_1 z+c_0}{\gamma(z)}
-
\left(z-\frac \xi 4\right)
\right] dz
-2q_{o} ^2
\end{equation}
with $c_1$ and $c_2$ given by equation \eqref{c1c2cond} and $c_0$ yet to be determined.
Note that $H_{o} $ is well-defined since the relation
$$
\frac{z^3+c_2z^2+c_1 z+c_0}{\gamma(z)}
-
\Big(z-\frac \xi 4\Big)
=
O\Big(\frac{1}{z^2}\Big), \quad z\to \infty
$$
ensures that the integrals in~\eref{omtil} are convergent.
Moreover,  it is evident from the contours of integration and the fact that $c_0, c_1, c_2 \in\mathbb R$ that $H_{o}   \in\mathbb R$.
Thus the desired behavior~\eqref{thetahas} of $\Re(ih)$ for large $k$ is achieved.
Also, combining~\eref{cdef} and~\eref{c1c2cond} we have 
\begin{equation}
\label{ak0system}
\alpha_\re = -k_{o} +\frac \xi 4,
\quad
\alpha_\im = \sqrt{2k_{o} ^2-\frac \xi 2 k_{o} +q_{o} ^2}.
\end{equation}

It thus remains to determine $k_{o} $.  We do so by analyzing the behavior of $h$ near $\alpha$.
In order for the jumps $V_3^{(4)}$, $V_5^{(4)}$ and $V_6^{(4)}$ to be bounded near $\alpha$,  
$\Re(ih)$ should have the sign structure shown in Figure~\ref{abcycles}~(right). 
Letting $\zeta=k-\alpha$, we have
\bse
\begin{gather}
\label{lamexp}
\left(k^2+q_{o} ^2\right)^{\frac 12}
=
\left(\alpha^2+q_{o} ^2\right)^{\frac 12}
\left[
1+\frac{\alpha \zeta}{\alpha^2+q_{o} ^2}+O\left(\zeta^2\right)
\right],
\quad \zeta\to 0,
\\
\label{gamexp}
\left[\left(k-\alpha\right)\left(k-\bar \alpha\right)\right]^{\frac 12}
=
\left(\alpha-\bar \alpha\right)^{\frac 12}
\left[\zeta^{\frac 12}+\frac{\zeta^{\frac 32}}{2\left(\alpha-\bar \alpha\right)}+O\big(\zeta^{\frac 52}\big)\right],\quad \zeta\to 0.
\end{gather}
\ese
Hence, in a neighborhood of $\alpha$, we have
\begin{equation*}
\frac{dh}{dk}
=
-\frac{4\left(\alpha-\bar \alpha\right)^{\frac 12}\left(\alpha-k_o\right)}{\left(\alpha^2+q_{o} ^2\right)^{\frac 12}}
\left\{
\zeta^{\frac 12}
+
\left[
\frac{1}{\alpha-k_{o} }
+
\frac{1}{2\left(\alpha-\bar \alpha\right)}
-\frac{\alpha}{\alpha^2+q_{o} ^2}
\right]
\zeta^{\frac 32}
+O(\zeta^{\frac 52})
\right\}, \quad \zeta\to 0,
\end{equation*}
from which, integrating, we obtain the expansion
\begin{align*}
h(k)
&=
h(\alpha)
-\frac{4\left(\alpha-\bar \alpha\right)^{\frac 12}\left(\alpha-k_{o} \right)}{\left(\alpha^2+q_{o} ^2\right)^{\frac 12}}
\bigg\{
\frac 23
\left(k-\alpha\right)^{\frac 32}
\nonumber\\
&\quad
+
\frac 25
\left[
\frac{1}{\left(\alpha-k_{o} \right)}
+
\frac{1}{2\left(\alpha-\bar \alpha\right)}
-\frac{\alpha}{\alpha^2+q_{o} ^2}
\right]
\left(k-\alpha\right)^{\frac 52}
+O(\left(k-\alpha\right)^{\frac 72})
\bigg\},\quad k\to \alpha,
\end{align*}
where 
\begin{equation}\label{hadef}
h(\alpha)
=
 -2
 \left(\int_{iq_{o} }^\alpha+\int_{-iq_{o} }^\alpha \right)
\frac{z^3+c_2z^2+c_1z+c_0}{\gamma(z)}\, dz.
\end{equation}
On the other hand, in order to obtain the sign structure in Figure \ref{abcycles}~(right), 
the leading order term of the expansion of $\Re(ih)$ near $\alpha$ should be of $O(k-\alpha)^{\frac 32}$. 
Thus,  we  must have
\begin{equation}\label{imha}
\Im \,h(\alpha) = 0.
\end{equation}
Using contour deformations, we can write $h(\alpha)$ in the form
\begin{equation}\label{hadef2}
h(\alpha)
=
 -2
 \left(\int_{iq_{o} }^\alpha+\int_{-iq_{o} }^{\bar \alpha} \right)
\frac{z^3+c_2z^2+c_1 z +c_0}{ \gamma(z)}\, dz
 -2
\int_{\bar\alpha}^\alpha 
\frac{z^3+c_2z^2+c_1 z +c_0}{ \gamma(z)}\, dz,
\end{equation}
where, as before,  $\gamma(z)$ is taken to be continuous from the right on the branch cut $\tilde B= [\bar\alpha,\alpha]$. 
We then note that the first term on the right-hand side of equation \eqref{hadef2} is real, while the second term is imaginary.
Hence \eqref{imha} is equivalent to the following condition:
\begin{equation}\label{imha3}
\int_{\bar \alpha}^{\alpha} 
\frac{z^3+c_2z^2+c_1 z +c_0}{ \gamma(z)}\, dz
=
0.
\end{equation}
It is convenient to reformulate the above condition as follows.
The integrals of $dh$ from $\bar \alpha$ to $-iq_{o} $ and from $\alpha$ to $iq_{o} $ are both equal to half of the integral of $dh$ along the $\upalpha$-cycle. 
Hence, by analyticity the contour of integration from $\bar \alpha$ to $\alpha$ on the right of  $\tilde B $  can be deformed to the contour from $-iq_{o} $ to $iq_{o} $ on the left of  $B$. 
Thus, recalling the fact that $\gamma^+=-\gamma^-=-\gamma$ across $B$ and also equation \eqref{guant},  condition~\eqref{imha3} takes the equivalent forms
\begin{equation*}
\int_{-iq_{o} }^{iq_{o} } 
\frac{z^3+c_2z^2+c_1 z +c_0}{ \gamma(z)}\, dz
=
\int_{-iq_{o} }^{iq_{o} } 
\sqrt{\frac{\left(z-\alpha_\re\right)^2+\alpha_\im^2}{z^2+q_{o} ^2}}\left(z-k_{o} \right) dz
=0.
\end{equation*}
Finally, recalling~\eref{ak0system}, the above condition yields the following integral equation for $k_{o} $:
\begin{equation}\label{k0k0}
\int_{-iq_{o} }^{iq_{o} } 
\sqrt{\frac{\left(z+k_{o} -\frac \xi 4\right)^2+2k_{o} ^2-\frac \xi 2 k_{o}  +q_{o} ^2}{z^2+q_{o} ^2}}\left(z-k_{o} \right) dz
=0.
\end{equation} 
The solution of~\eref{k0k0} uniquely determines the points $\alpha$ and $k_{o} $, and hence the function $h$, in terms of $\xi$ and $q_{o} $.  

\begin{remark}
The integral equation \eqref{k0k0} is trivially satisfied for:
\begin{enumerate}
\item[(i)]
$\xi = -4\sqrt 2 q_{o} $ (i.e, $x= -4\sqrt2 q_{o} t$).
In this case $k_{o}  = - q_{o} /\sqrt2$, and $\alpha_\re=k_{o} $ and $\alpha_\im=0$. 
Note that for $\xi = -4\sqrt 2 q_{o} $ equation \eqref{kroots} implies that $k_1=k_2 = k_{o} $. 
Thus, at the interface between the plane wave and modulated elliptic wave regions,
the two asymptotic descriptions are consistent. 
\item[(ii)]
$\xi = 0 $ (i.e, either $x=0$ or in the limit $t\to\infty$). 
In this case 
the point $k_{o} $  collapses to  the origin
and the branch points $\alpha$ and $\bar \alpha$ collapse to the branch points $\pm iq_{o} $.
\end{enumerate}
More generally, we can show the following, similarly to \cite{bks2011}:
\end{remark}
\begin{lemma} 
For all $\xi\in(-4\sqrt2q_{o},0)$, the integral equation \eqref{k0k0} has a unique solution $k_{o} =k_{o} (\xi)$. 
\end{lemma}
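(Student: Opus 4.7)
My plan is to prove existence and uniqueness by combining the intermediate value theorem with strict monotonicity of the left-hand side of~\eqref{k0k0} in the variable $k_o$. For each fixed $\xi\in(-4\sqrt2q_o,0)$, denote the left-hand side of~\eqref{k0k0} by $\Phi(k_o;\xi)$; equivalently, using~\eqref{gammadef}, $\Phi(k_o;\xi)=\int_{-iq_o}^{iq_o}\gamma(z)\,(z^2+q_o^2)^{-1}(z-k_o)\,dz$, with $\alpha=\alpha(k_o,\xi)$ determined by~\eqref{ak0system}. The two limit cases in the preceding Remark identify $[-q_o/\sqrt2,0]$ as the natural interval of admissible $k_o$, and the goal is to show that $\Phi(\cdot;\xi)$ is continuous on this closed interval, takes values of opposite sign at its two endpoints, and is strictly monotonic in its interior.

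\textbf{Reduction to a real integral and existence.} The first step is to parametrize the contour $B$ by $z=iq_o t$, $t\in[-1,1]$, and to use the branch conventions for $\gamma$ (right-sided limit on $B\cup\tilde B$, $\gamma\sim z^2$ at infinity) together with the reality of $k_o$ and of $\alpha_\re=\xi/4-k_o$. The quantity $\alpha_\im^2=2k_o^2-(\xi/2)k_o+q_o^2$ is strictly positive throughout $|\xi|<4\sqrt2q_o$, since the discriminant of this quadratic in $k_o$ equals $\xi^2/4-8q_o^2<0$ on that range. Under the involution $t\mapsto -t$, which corresponds to complex conjugation on $B$, the appropriate branch choice yields $I(t)+I(-t)\in\mathbb R$ for the integrand $I(t)$, so that $\Phi(k_o;\xi)$ reduces to a manifestly real smooth integral over $[0,1]$ that is continuous in $k_o$ up to the endpoints. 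The preceding Remark supplies the degenerate values $\Phi(-q_o/\sqrt2;-4\sqrt2q_o)=0$ and $\Phi(0;0)=0$; inspecting the reduced real representation then shows that, for every $\xi\in(-4\sqrt2q_o,0)$, the endpoint values $\Phi(-q_o/\sqrt2;\xi)$ and $\Phi(0;\xi)$ are nonzero and of opposite sign. The intermediate value theorem yields at least one zero $k_o\in(-q_o/\sqrt2,0)$.

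\textbf{Monotonicity, the main obstacle.} The hardest step is uniqueness, which I plan to deduce from strict monotonicity of $\Phi(\cdot;\xi)$ on $(-q_o/\sqrt2,0)$. Differentiating in $k_o$ produces an explicit contribution $-\int_{-iq_o}^{iq_o}\gamma(z)(z^2+q_o^2)^{-1}dz$ coming from the factor $(z-k_o)$, plus an implicit contribution from the dependence of $\alpha$ on $k_o$ through~\eqref{ak0system}. These two pieces carry competing naive signs, so bounding them separately does not suffice. Following the analogous analysis of~\cite{bks2011}, the plan is to recast $\partial_{k_o}\Phi$ as the $\upalpha$-period of a holomorphic differential on the genus-$1$ Riemann surface $\Upsigma$ of~\eqref{gammadef}. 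Periods of holomorphic differentials on $\Upsigma$ are nonzero and of fixed sign by the Riemann bilinear relations, which delivers the required strict monotonicity and completes the proof. The key technical point will be to identify the correct holomorphic differential and to verify that the implicit derivative terms assemble into the proper closed form on $\Upsigma$.
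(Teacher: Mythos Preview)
Your approach differs from the paper's. The paper applies the implicit function theorem after the change of variables $x=-\xi/(8q_o)$, $y=(k_o-\xi/4)/q_o$, and simply asserts (without detailed justification) that a partial derivative of the transformed integral $f(x,y)$ keeps a fixed sign throughout the relevant rectangle; this, together with the known boundary solution at $\xi=-4\sqrt2q_o$, is enough to propagate a unique implicit solution across the whole range. Your IVT-plus-monotonicity route is a legitimate alternative, and the two are closer than they first appear: the nonvanishing partial derivative that drives the IFT is essentially your strict monotonicity of $\Phi(\cdot;\xi)$ in $k_o$. What your plan adds over the paper is an explicit mechanism for \emph{proving} that sign condition---identifying $\partial_{k_o}\Phi$ with a period of a holomorphic differential on $\Upsigma$ and invoking the Riemann bilinear relations, as in~\cite{bks2011}---whereas the paper just states it.

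Two points in your plan still need work. First, the endpoint sign analysis for IVT is only asserted; ``inspecting the reduced real representation'' hides a genuine computation, since the integrand in~\eqref{k0k0} is complex on $B$ and the reality of $\Phi$ comes only after pairing $t$ with $-t$ under the correct branch of the square root---you will need to actually evaluate (or bound) $\Phi(-q_o/\sqrt2;\xi)$ and $\Phi(0;\xi)$ for generic $\xi$, not just at the degenerate values of $\xi$ supplied by the Remark. Second, you should justify that the fixed interval $[-q_o/\sqrt2,0]$ is where the problem lives for every $\xi$ (the paper instead works on the $\xi$-dependent interval $(\xi/8,0)$), or else argue separately that any zero of $\Phi(\cdot;\xi)$ must fall in your interval. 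Neither issue looks fatal, but both require more than what is currently written.
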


\begin{proof}
The result essentially follows from the implicit function theorem.  The change of variables
\begin{equation}\label{xy-def}
x=-{\xi}/{(8q_{o} )}, \quad y={k_{o} -\frac \xi 4}/{q_{o} },
\end{equation}
and the parametrization $z=iq_{o}  \zeta$ turn equation \eqref{k0k0} into
\begin{equation}\label{k0k02}
f(x,y)
=
\int_{-1}^{1} 
\sqrt{\frac{\left(i\zeta+y\right)^2+2y\left(y-2x\right)+1}{1-\zeta^2}}\left(i\zeta+2x-y\right) d\zeta
=0.
\end{equation} 
%
Since $\xi\in\left[-4\sqrt 2 q_{o}, 0\right]$ and $k_{o} \in (\frac \xi 8, 0 )$, we consider equation \eqref{k0k02} only for $(x, y)\in \mathcal D= (-\tfrac{\sqrt 2}{2}, \sqrt 2)\times (0, \tfrac{\sqrt{2}}{2})$.
Since $f(0, 0)=0$ and $f_x(0, 0)< 0$,   by the implicit function theorem there exists a neighborhood $X\times Y$ of $(0,0)$ and a unique function $F: X\mapsto Y$ such that 
$
\left\{\left(x, F(x)\right): x\in X\right\} = \left\{ (x, y)\in X\times Y: f(x, y)=0\right\}.
$ 
Hence, there exists a unique solution $y(x)$ of the integral equation \eqref{k0k02}
in a neighborhood of $(0, 0)$. Moreover,  $f_x(x, y)<0$ throughout the domain $\mathcal D$ with the exception of the point $(\frac{\sqrt{2}}{2},\frac{\sqrt{2}}{2})$. Thus, there exists a neighborhood around every point $(x, y)\in \mathcal D$ in which there exists a unique function $F(x)$ such that $y=F(x)$ represents the unique solution of equation \eqref{k0k02}, and the only point where uniqueness is violated is $(\frac{\sqrt{2}}{2},\frac{\sqrt{2}}{2})$. However, this point is itself a solution of equation \eqref{k0k02} (see case (i) above for $\xi = -4\sqrt 2 q_{o} $). Thus, for any $x\in (-\frac{\sqrt 2}{2}, \sqrt 2)$  there exists a unique $y(x)$ such that $f(x, y(x))=0$, and $y( \frac{\sqrt{2}}{2})=\frac{\sqrt{2}}{2}$.
Equivalently, for any $\xi\in \left[-4\sqrt 2 q_{o}, 0\right]$ there exists a unique solution $k_{o} (\xi)$ of the integral equation \eqref{k0k0} with $k_{o} (-4\sqrt 2 q_{o} )=-\frac{\sqrt{2}}{2} q_{o} $.
\end{proof}

\paragraph{Global sign structure of $\Re(ih)$.}
In summary, when the points $\alpha$ and $k_{o} $ are chosen according to equations~\eqref{ak0system}  and~\eqref{k0k0}, the function $h$ defined by equation \eqref{habel} eliminates the growth from the matrices $V_7^{(4)}$ and $V_8^{(4)}$ and the quantity $\Re(ih)$   has  the correct sign structure both for large $k$ and for $k$ near $\alpha$ and $\bar \alpha$.
%
%
Before formulating the Riemann-Hilbert problem for the function $M^{(4)}$, however, one must check that $\Re(ih)$ has the correct sign structure \textit{for all finite} $k\in\Complex$.
To confirm that this is indeed the case, it remains to verify that $\Re(ih)$ has the correct behavior near $k=0$.
Specifically, we want to show that for any $k_{o}, \alpha_\re<0$, there exists a neighborhood of the origin in which $\Re(ih)$ has the required sign structure.  So see this, using the expansions
\begin{gather*}
\left[\left(k-\alpha\right)\left(k-\bar \alpha \right)\right]^{\frac 12}
=
|\alpha|
\Big[1-\frac{\alpha_\re k}{|\alpha|^2}
+
 \frac{\alpha_\im^2}{2|\alpha|^4} k^2
+O(k^3)\Big],
\quad
\lambda 
=
\textrm{sign}(k_\re)
\,
q_{o} 
\Big[
1+\frac{k^2}{2q_{o} ^2}+O(k^4)\Big],
\end{gather*}
as $k\to0$ with $k_\im>0$, we find
\begin{equation*}
\frac{dh}{dk}
=
\frac{4|\alpha|k_{o} }{\textrm{sign}(k_\re)q_{o} }
\Big[
1-\Big(\frac{\alpha_\re}{|\alpha|^2}+\frac{1}{k_{o} }\Big)k
+
\Big(
\frac{\alpha_\im^2}{2|\alpha|^4}-\frac{1}{2q_{o} ^2}
+
\frac{\alpha_\re}{k_{o} |\alpha|^2}
\Big)k^2
+
O(k^3)
\Big],
\end{equation*}
which implies 
\begin{equation}
h(k)
=
\frac{4|\alpha|k_{o} }{\textrm{sign}(k_\re)q_{o} }
\Big[
k-\frac 12 \Big(\frac{\alpha_\re}{|\alpha|^2}+\frac{1}{k_{o} }\Big)k^2
+
O(k^3)
\Big]
+h(0)
\label{h0exp}
\end{equation}
in the same limit.
Since by the definition \eqref{habel2} of $h$ it follows that $h(0)\in\mathbb R$, equation \eqref{h0exp} yields
\begin{equation}
\Re(ih)
=
-\frac{4|\alpha|k_{o} k_\im}{\textrm{sign}(k_\re)q_{o} }
\Big[
1- \Big(\frac{\alpha_\re}{|\alpha|^2}+\frac{1}{k_{o} }\Big)k_\re
\Big]
+
O(k^3)
\end{equation}
as $k\to0$ with $k_\im>0$.
Recalling that $k_{o}, \alpha_\re< 0$ we thereby deduce that
for small $k$ in the first quadrant  we have
$\Re(ih)>0$, while for small $k$  in the second quadrant  
$\Re(ih)<0$ provided that
$
\left(k_{o} \alpha_\re+|\alpha|^2\right)k_\re>k_{o}  |\alpha|^2.
$
The treatment of the lower half-plane is analogous.

\begin{remark}
The analysis  of the sign structure of $\Re(ih)$ as $k\to\infty$, for $k$ near $\alpha$ and $\bar \alpha$, and as $k\to0$ implies that it is always possible to deform the contours of Figure \ref{defr3}  so that they do not go through regions in which $\Re(ih)$ has the ``wrong'' sign (i.e., a sign that causes exponential growth). 
This is because $\Re(ih)$ is a harmonic function away from the branch cuts. 
If there existed a region of ``wrong'' sign separating two regions of ``correct'' sign, then there would be further critical points of $\Re(ih)$ in addition to $\alpha, \bar \alpha, k_{o} $. 
In turn, this would imply additional critical points for $h$ besides $\alpha, \bar \alpha, k_{o} $. This is impossible, however, since the Abelian differential \eqref{habel} has exactly three zeros, namely  $\alpha, \bar \alpha, k_{o} $.
Therefore, $\Re(ih)$ has the appropriate sign structure \textit{for all} $k\in\mathbb C$, as shown in Figure \ref{hsign2}. 
\end{remark}

\begin{figure}[t!]
\begin{center}
\includegraphics[scale=1]{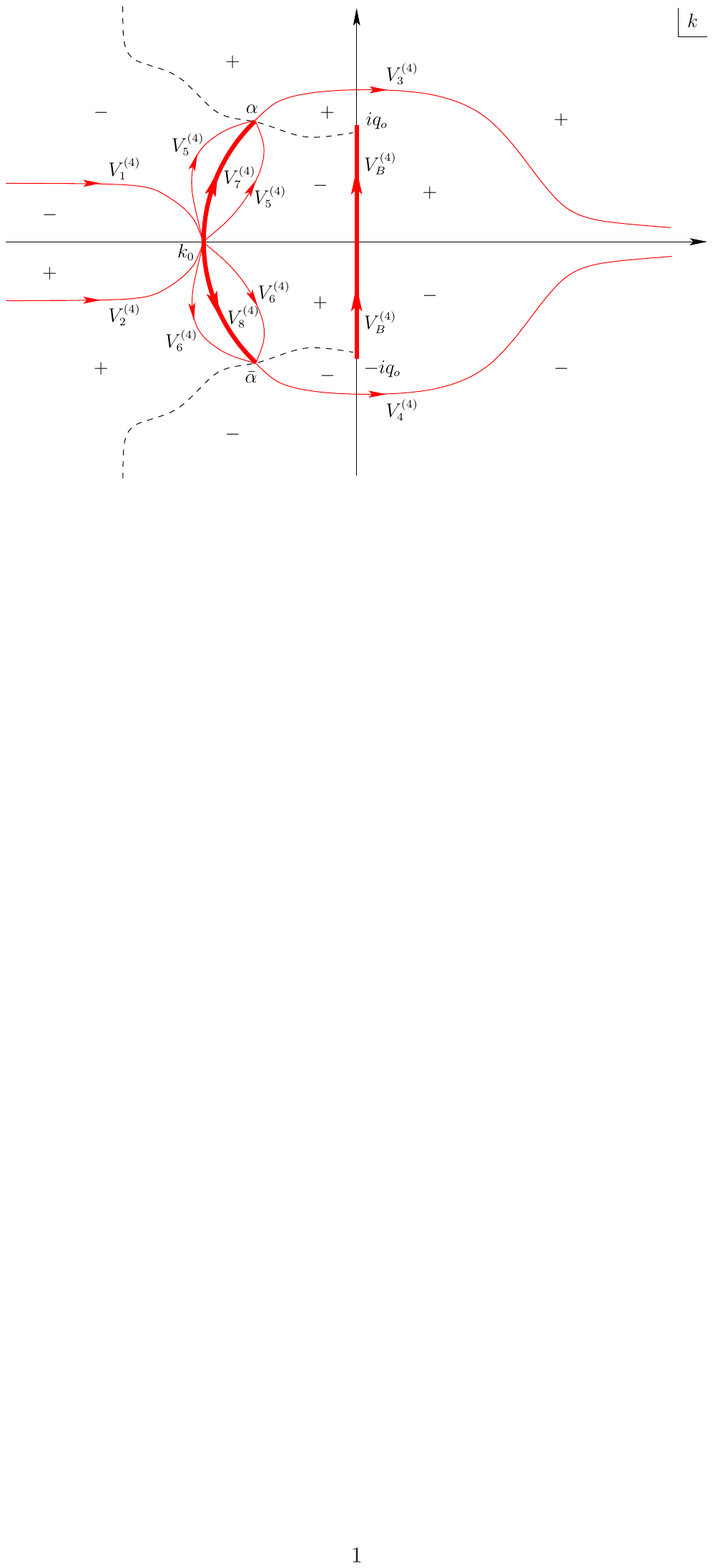}
\caption{The sign structure of $\Re(ih)$ for all $k\in\mathbb C$.}
\label{hsign2}
\end{center}
\end{figure}

\begin{figure}[t!]
\hskip -0.1cm
\begin{subfigure}{0.275\textwidth}
\includegraphics[scale=.42]{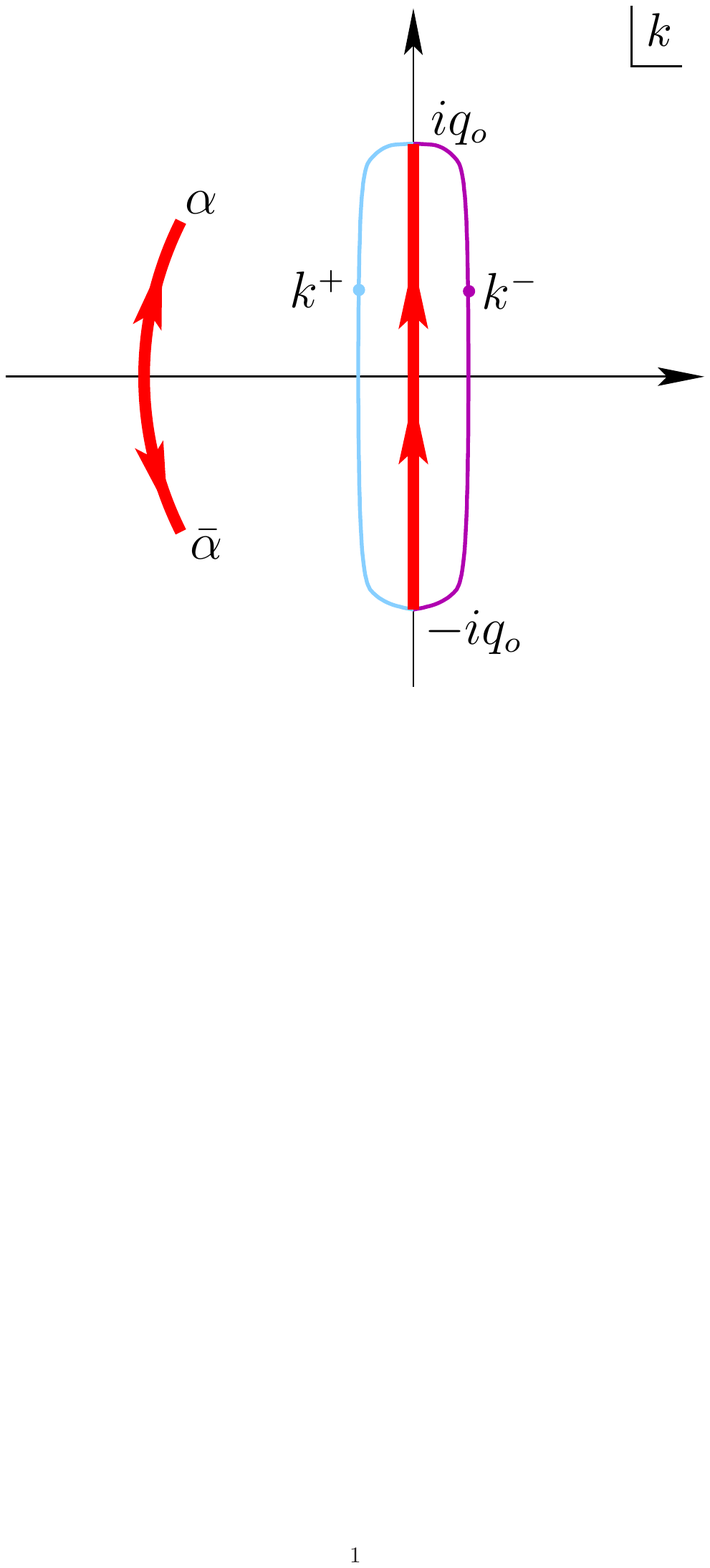}
\end{subfigure}
\hskip 1.4cm
\begin{subfigure}{0.275\textwidth}
\includegraphics[scale=.42]{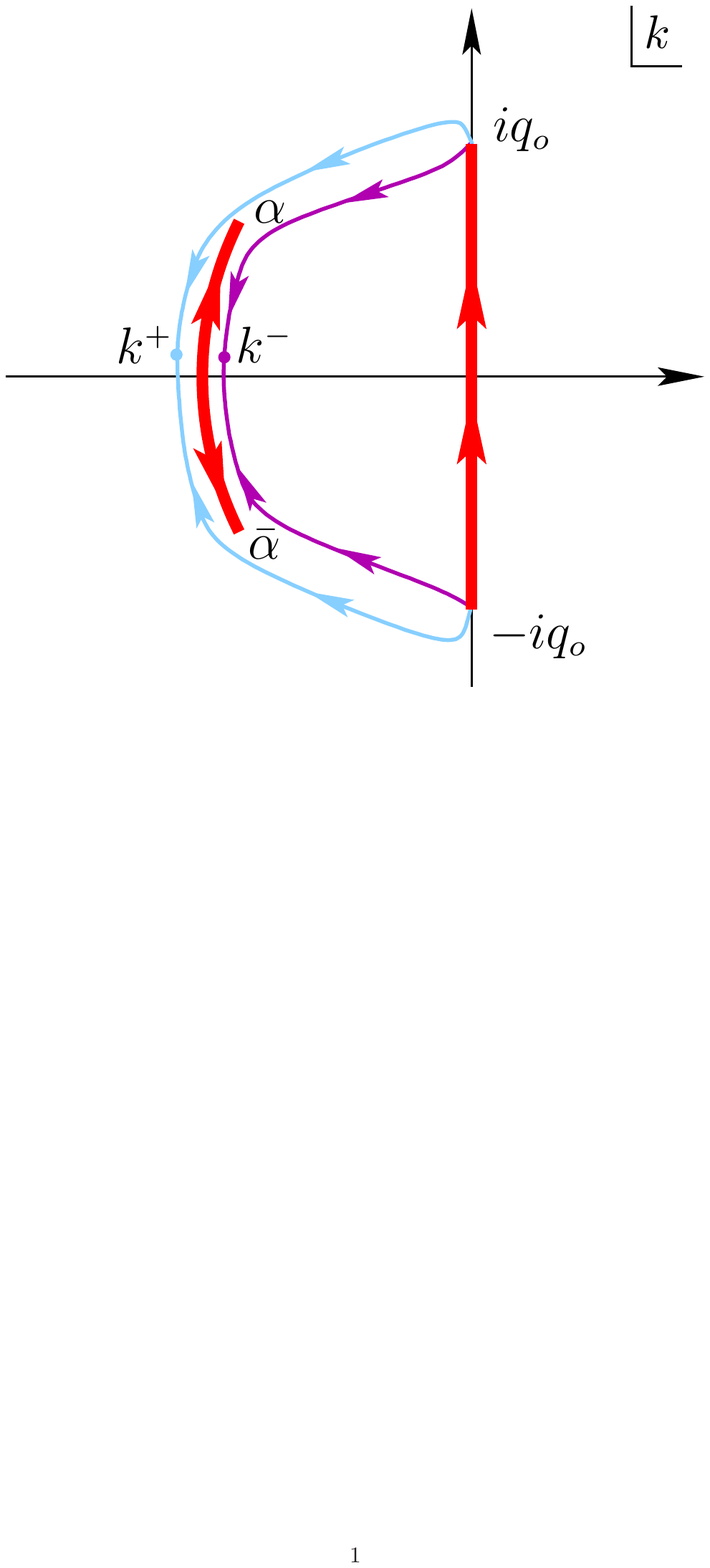}
\end{subfigure} 
\caption{Left: The computation of the jump of $h$ across $B$ is done by recalling that both Abelian integrals in~\eqref{habel} change sign due to the function $\gamma$ involved in the Abelian differential $dh$. Right: The computation of the jump of $h$ across $\tilde B$ is done by first deforming the contours of integration of the Abelian integrals in  \eqref{habel} as shown in the figure and then by noting that $dh$ changes sign across $\tilde B$ due to the function $\gamma$.}
\label{abelfig-comb}
\end{figure}

\paragraph{The jumps of $h$.}
Recall that $h(k)=h(\xi, \alpha, k_{o}, k)$ is analytic in $\mathbb C\setminus (B\cup\tilde B)$. 
We now compute the jumps of $h$ across the branch cuts $B$ and $\tilde B$, which are needed to determine the jump matrices in~\eref{jumph}.
We do so by deforming the contours involved in equation \eqref{habel2} as shown in Figures  \ref{abelfig-comb},
obtaining the jump conditions
\begin{subequations}\label{rhph}
\begin{align}
& h^+(k)+h^-(k)=0, 
\hskip .45cm
k\in B,\label{jhL}
\\
&h^+(k)+h^-(k)=\Omega,
\quad
k\in L_7\cup L_8,
\label{jhLL}
\end{align}
\end{subequations}
where the real constant $\Omega $ is defined by 
\begin{equation} \label{Om-def}
\Omega 
=
-4 \left(\int_{iq_{o} }^{\alpha}+\int_{-iq_{o} }^{\bar \alpha} \right)
\frac{\left(z-k_{o} \right)\left(z-\alpha\right)\left(z-\bar \alpha\right)}{ \gamma(z)}\, dz.
\end{equation}
Moreover, recalling the large-$k$ behavior of $h$ specified by equation \eqref{has}, we infer that $h$ satisfies the normalization condition 
\begin{equation}
h(k) = -2k^2+\xi k+H_{o}   +O(1/k),
\quad k \to \infty,
\end{equation}
where, for  $\alpha$ and $k_{o} $ given by equations \eqref{ak0system} and \eqref{k0k0}, the jump contours $L_7$ and $L_8$ are depicted in Figure \ref{defr3}, 
and the real constant $H_{o} $ is defined by equation \eqref{omtil}.

\paragraph{The Riemann-Hilbert problem for $M^{(4)}$.}
By the definition \eqref{m4r} of $M^{(4)}$ and the Riemann-Hilbert problem  \eqref{rhp3r} for $M^{(3)}$, we infer that $M^{(4)}$ is analytic in $\mathbb C \setminus  (\cup_{j=1}^8 L_j \cup  B)$ and satisfies the jump conditions
\begin{subequations}\label{rhp4r}
\begin{align}
&M^{(4)+}(k) = M^{(4)-}(k)V_B^{(4)},\quad k\in B,
\\
&M^{(4)+}(k) = M^{(4)-}(k)V_j^{(4)},\quad k\in L_j,\ j=1, \ldots, 8,
\end{align}
and the normalization condition
\begin{equation}\label{m4-norm-mew}
M^{(4)}(k) =
\left[I+O(1/k)\right] e^{-iG_\infty t  \sigma_3},\quad k \to \infty,
\end{equation}
\end{subequations}
where the jump contours $L_j$ are  shown in Figure \ref{defr3}, 
the jump matrices $V_j^{(4)}$  \eqref{jumph} simplify thanks to the jumps \eqref{rhph} satisfied by $h$ to  
\begin{align}\label{jump4r}
&
V_B^{(4)}
=
\begin{pmatrix}
0
&
\dfrac{q_-\delta^{2}}{iq_{o} }
\\
\dfrac{\bar q_-}{iq_{o} \delta^{2}}
&
0
\end{pmatrix},
\quad
V_1^{(4)}
=
\begin{pmatrix}
1
&
\dfrac{\bar r\,\delta^{2} }{1+r\bar r}\,e^{2i h t}
\\
0
&
1
\end{pmatrix},
\quad
V_2^{(4)}
=
\begin{pmatrix}
1
&
0
\\
\dfrac{r }{\left(1+r\bar r\right) \delta^{2}}\,e^{-2i h t}
&
1
\end{pmatrix},
\nonumber\\
&V_3^{(4)}
=
\begin{pmatrix}
1
&
0
\\
\dfrac{r}{\delta^2}\, e^{-2i h t}
&
1
\end{pmatrix},
\ 
V_4^{(4)}
=
\begin{pmatrix}
1
&
\bar r \delta^{2}\, e^{2i h t}
\\
0
&
1
\end{pmatrix},
\ 
V_5^{(4)}
=
\begin{pmatrix}
1
&
\dfrac{\delta^{2}}{r}\, e^{2i h t} 
\\
0
&
1
\end{pmatrix}, 
V_6^{(4)}
=
\begin{pmatrix}
1
&
0
\\
\dfrac{1}{ \bar r\delta^{2}}\, e^{-2i h t} 
&
1
\end{pmatrix},
\nonumber\\
&
V_7^{(4)}
=
\begin{pmatrix}
0
&
-\dfrac{\delta^{2}}{r}\, e^{i\Omega   t} 
\\
\dfrac{r}{\delta^{2}}\, e^{-i\Omega   t} 
&
0
\end{pmatrix},
\quad
V_8^{(4)}
=
\begin{pmatrix}
0
&
\bar r\delta^{2}\, e^{i\Omega   t} 
\\
-\dfrac{1}{\bar r \delta^{2}}\, e^{-i\Omega   t} 
&
0
\end{pmatrix},
\end{align}
and, using the expansions \eqref{thetas} and \eqref{has}, the real constant $G_\infty$ involved in the normalization condition \eqref{m4-norm-mew} is equal to
\begin{equation}\label{Ginf}
G_\infty 
=
H_{o}   +q_{o} ^2
\end{equation}
with the real constant $H_{o} $ defined by equation \eqref{omtil}.
%
%
%
%
%
%
\paragraph{Fifth deformation.}
Our final task is to eliminate the dependence on $k$ from the three jumps $V_B^{(4)}$, $V_7^{(4)}$ and $V_8^{(4)}$ across the branch cuts $B$  and $\tilde B$.
This can be achieved with the help of an additional $g$-function, this time introduced via a $t$\textit{-independent} exponential, 
exactly as in the fourth deformation \eqref{m4def} for the plane wave region (as opposed to transformation \eqref{m4r}). 
In particular, we let
\begin{equation}\label{m5r}
M^{(5)}(x, t, k)
=
M^{(4)}(x, t, k) e^{ig(k)\sigma_3},
\end{equation}
where the function $g$ is analytic in $\mathbb C\setminus (B\cup \tilde B)$ with jumps
\begin{subequations}\label{pav1}
\begin{align}
&g^+(k)+g^-(k)
=
-i\ln\left(\delta^2\right), \hskip 1.9cm
k\in B,
\\
& g^+(k)+g^-(k)
=
-i\ln\left(\delta^2/r\right) + \omega, \hskip .75cm k\in L_7,
\\
&
g^+(k)+g^-(k)
=
-i\ln\left( \delta^2  \bar r\right) + \omega, \hskip .94cm
k\in L_8,
\end{align}
\end{subequations}
with the function $\delta$  defined by equation \eqref{deldefr} and 
the real constant $\omega$  given by
\begin{equation}\label{omr}
\omega
=
 i 
 \left(
\displaystyle
\int_{B} \frac{\ln\left[\delta^2(\nu) \right]}{  \gamma(\nu )}\, d\nu
+
\int_{L_7} \frac{
\ln\left[\frac{\delta^2(\nu) }{r(\nu )}\right]}{  \gamma(\nu )}\, d\nu 
-
\int_{L_8} \frac{\ln\left[ \delta^2(\nu)  \, \bar r(\nu )\right]}{  \gamma(\nu )}\, d\nu 
\right)
\left/
\int_{\tilde B}  \frac{d\nu}{  \gamma(\nu )}\right..
\end{equation}
Proceeding as in the plane wave region, we arrive at a scalar, additive Riemann-Hilbert problem analogous to problem \eqref{rhpg}, which is solved via Plemelj's formulae to yield 
\begin{align}
g(k)
&=
\frac{\gamma(k)}{2\pi}
\bigg[
\int_{B} \frac{\ln\left[\delta^2(\nu) \right]}{ \gamma(\nu )(\nu -k)}\, d\nu 
+
\int_{L_7} \frac{
\ln\left[\frac{\delta^2(\nu) }{r(\nu )}\right]+i\omega}{ \gamma(\nu )(\nu -k)}\, d\nu 
-
\int_{L_8} \frac{\ln\left[ \delta^2(\nu)  \, \bar r(\nu )\right]+i\omega}{\gamma(\nu )(\nu -k)}\, d\nu 
\bigg].\label{gsolr}
\end{align}
The definition \eqref{omr} of $\omega$ ensures that $g(k)=O(1)$ as $k \to \infty$. In particular, we have
\begin{equation}\label{ginfr}
g(k)
=
g_\infty 
+
O(1/k),
\quad 
k \to \infty,
\end{equation}
where the real constant $g_\infty $ is given by
\begin{equation}
g_\infty 
=
\frac{1}{2\pi}
\bigg[
-
\int_{B} \frac{\ln\left[\delta^2(\nu) \right]}{\gamma(\nu )}\,\nu d\nu
-
\int_{L_7} \frac{
\ln\left[\frac{\delta^2(\nu) }{r(\nu )}\right]}{ \gamma(\nu )}\,\nu  d\nu 
+
\int_{L_8} \frac{\ln\left[ \delta^2(\nu)  \, \bar r(\nu )\right]}{\gamma(\nu )}\,\nu  d\nu 
-i \omega\,\alpha_\re\int_{\tilde B} \frac{d\nu}{\gamma(\nu )}  \bigg]. 
\label{ginfr2}
\end{equation}
Overall, the function $M^{(5)}$ defined by equation \eqref{m5r} satisfies the
following Riemann-Hilbert problem:
\begin{rhp}[Final problem in the modulated elliptic wave region]\label{rhp5r}
Determine a sectionally analytic matrix-valued function $M^{(5)}(k)=M^{(5)}(x, t, k)$ in $\mathbb C\setminus ( \cup_{j=1}^6 L_j \cup B \cup \tilde B)$ satisfying the jump conditions
\begin{subequations}\label{rhp55r}
\begin{align}
&M^{(5)+}(k) = M^{(5)-}(k)V_B, \hskip .6cm k\in B,
\\
&M^{(5)+}(k) = M^{(5)-}(k)V_{\tilde B}, \hskip .6cm k\in \tilde B,
\\
&M^{(5)+}(k) = M^{(5)-}(k)V_j^{(5)}, \quad k\in L_j,\ j=1, \dots, 6, 
\end{align}
and the normalization condition
\begin{equation}
M^{(5)}(k) =
 \left[I+O\left(1/k \right)\right]e^{i\left(g_\infty -G_\infty t  \right)\sigma_3},
\quad k \to \infty,
\end{equation}
\end{subequations}
where the jump $V_B$ across the branch cut $B$ is defined by equation \eqref{v18}, the jump $V_{\tilde B}$ across the branch cut $\tilde B$ is defined by 
\begin{equation}
V_{\tilde B}
=
\begin{pmatrix}
0
&
-e^{i(\Omega t - \omega)} 
\\
e^{-i(\Omega t - \omega)} 
&
0
\end{pmatrix},
\end{equation}
the jumps $V_j^{(5)}$ across the  contours  $L_j$ shown in Figure \ref{defr3} are given by
\begin{align}\label{jump5r}
&
V_1^{(5)}
=
\begin{pmatrix}
1
&
\dfrac{\bar r\,\delta^{2} }{1+r\bar r}\,e^{2i(ht-g)}
\\
0
&
1
\end{pmatrix}, 
\quad
V_2^{(5)}
=
\begin{pmatrix}
1
&
0
\\
\dfrac{r }{\left(1+r\bar r\right) \delta^{2}}\,e^{-2i(ht-g)}
&
1
\end{pmatrix}, 
\nonumber\\
&V_3^{(5)}
=
\begin{pmatrix}
1
&
0
\\
\dfrac{r}{\delta^{2}}\, e^{-2i(ht-g)}
&
1
\end{pmatrix}, 
\quad
V_4^{(5)}
=
\begin{pmatrix}
1
&
\bar r \delta^{2}\, e^{2i(ht-g)}
\\
0
&
1
\end{pmatrix}, 
\nonumber
\\
&V_5^{(5)}
=
\begin{pmatrix}
1
&
\dfrac{\delta^{2}}{r}\, e^{2i(ht-g)} 
\\
0
&
1
\end{pmatrix},
\quad 
V_6^{(5)}
=
\begin{pmatrix}
1
&
0
\\
\dfrac{1}{ \bar r\delta^{2}}\, e^{-2i(ht-g)} 
&
1
\end{pmatrix},
\end{align}
the function $\delta$ is defined by equation \eqref{deldefr}, the function $h$ is defined by the Abelian integral \eqref{habel}, and satisfies the jumps \eqref{rhph}, the function $g$ is defined by equation \eqref{gsolr}, the real constants $\Omega$,  $G_\infty$, $\omega$ and $g_\infty$ are given by equations \eqref{Om-def},  \eqref{Ginf}, \eqref{omr} and \eqref{ginfr2} respectively.
\end{rhp}

%
%
%
%
%
\paragraph{Decomposition of $M^{(5)}$.}
As for the final Riemann-Hilbert problem in the plane wave region, a suitable decomposition of $M^{(5)}$ is now required.
Namely, we decompose $M^{(5)}$ in a way that separates the jumps expected to yield the leading order contribution from the jumps expected to contribute only in the error. 
In particular, denoting by
$D_{k_{o} }^{\varepsilon}$, $D_{\alpha}^{\varepsilon}$ and $D_{\bar \alpha}^{\varepsilon}$ the disks of radius $\varepsilon$ centred at $k_{o} $, $\alpha$ and $\bar \alpha$ respectively, with $\varepsilon$ sufficiently small so that these disks do not intersect with each other or with $B$, we write
\begin{equation}\label{m5ear}
M^{(5)}
=
M^\err M^\asymp
\quad
\text{with}
\quad
M^\asymp
=
\begin{cases}
M^B, \quad k\in \mathbb C\setminus (D_{k_{o} }^{\varepsilon}\cup D_{\alpha}^{\varepsilon}\cup D_{\bar \alpha}^{\varepsilon}),
\\
M^D, \quad k\in D_{k_{o} }^{\varepsilon}\cup D_{\alpha}^{\varepsilon}\cup D_{\bar \alpha}^{\varepsilon},
\end{cases}
\end{equation}
where:
\begin{itemize}[leftmargin=*]
\item
the function $M^B$ is analytic in $\mathbb C\setminus(B \cup \tilde B)$  and satisfies the jump conditions 
\begin{subequations}\label{rhpBr}
\begin{align}
&M^{B+}(k) = M^{B-}(k) \,
V_B, \quad k\in B,
\label{rhpBbr}\\
&M^{B+}(k) = M^{B-}(k) \,
V_{\tilde B}, \quad k\in \tilde B
\label{rhpBcr}
\end{align}
(see Figure \ref{mbfig}), and the normalization condition
\begin{equation}
M^B(k) =  
 \left[I+O\left(1/k \right)\right]e^{i\left(g_\infty -G_\infty t  \right)\sigma_3},\quad k \to \infty,\label{mbasymrhpr}
\end{equation}
\end{subequations}

\begin{figure}[t!]
\begin{center}
\includegraphics[scale=.425]{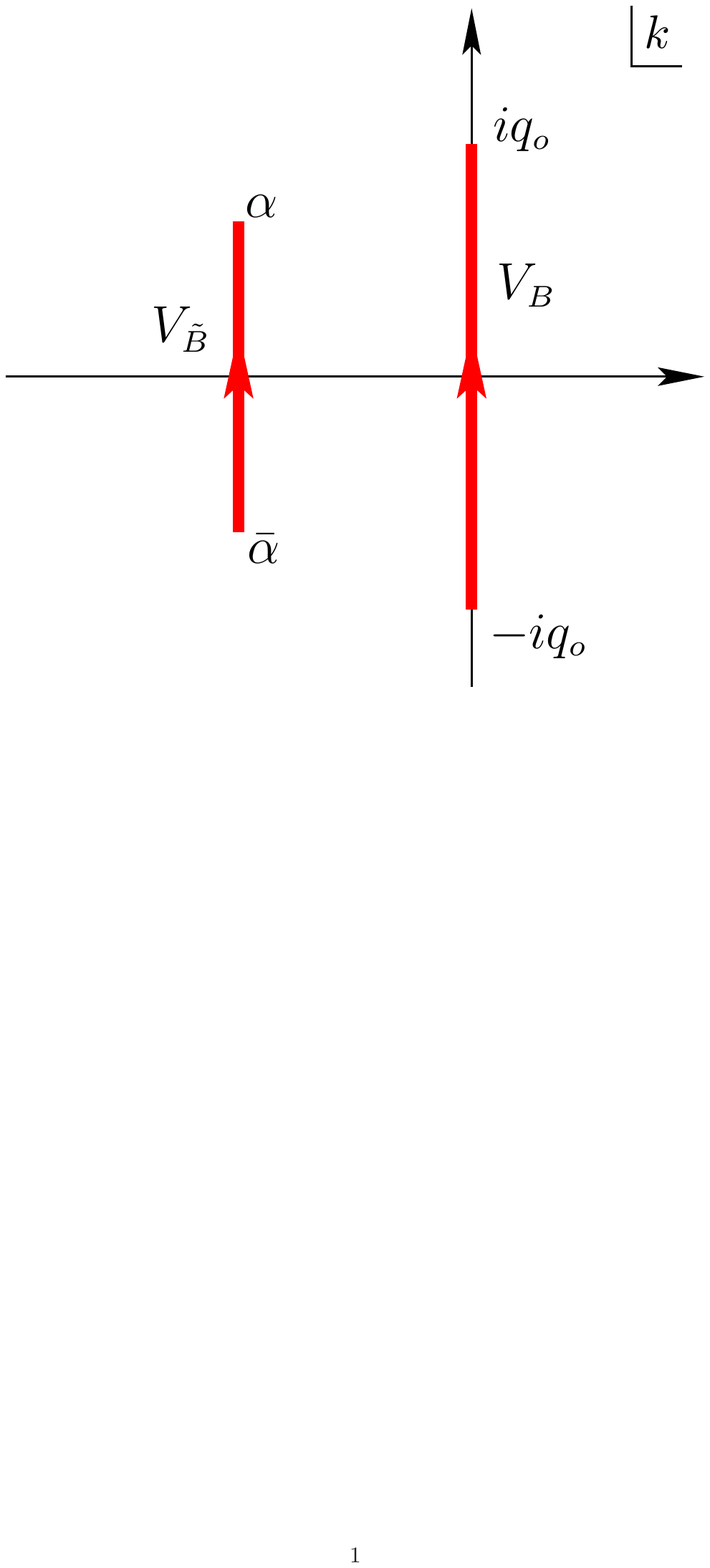}
\caption{The jumps of $M^B$ in the modulated elliptic wave region. 
Note that, since the jump $V_{\tilde B}$ is constant, the contour $\tilde B$ can be deformed 
to the straight line segment from $\bar\alpha$ to $\alpha$.}
\label{mbfig}
\end{center}
\bigskip
\medskip
\begin{center}
\includegraphics[width=0.235\textwidth]{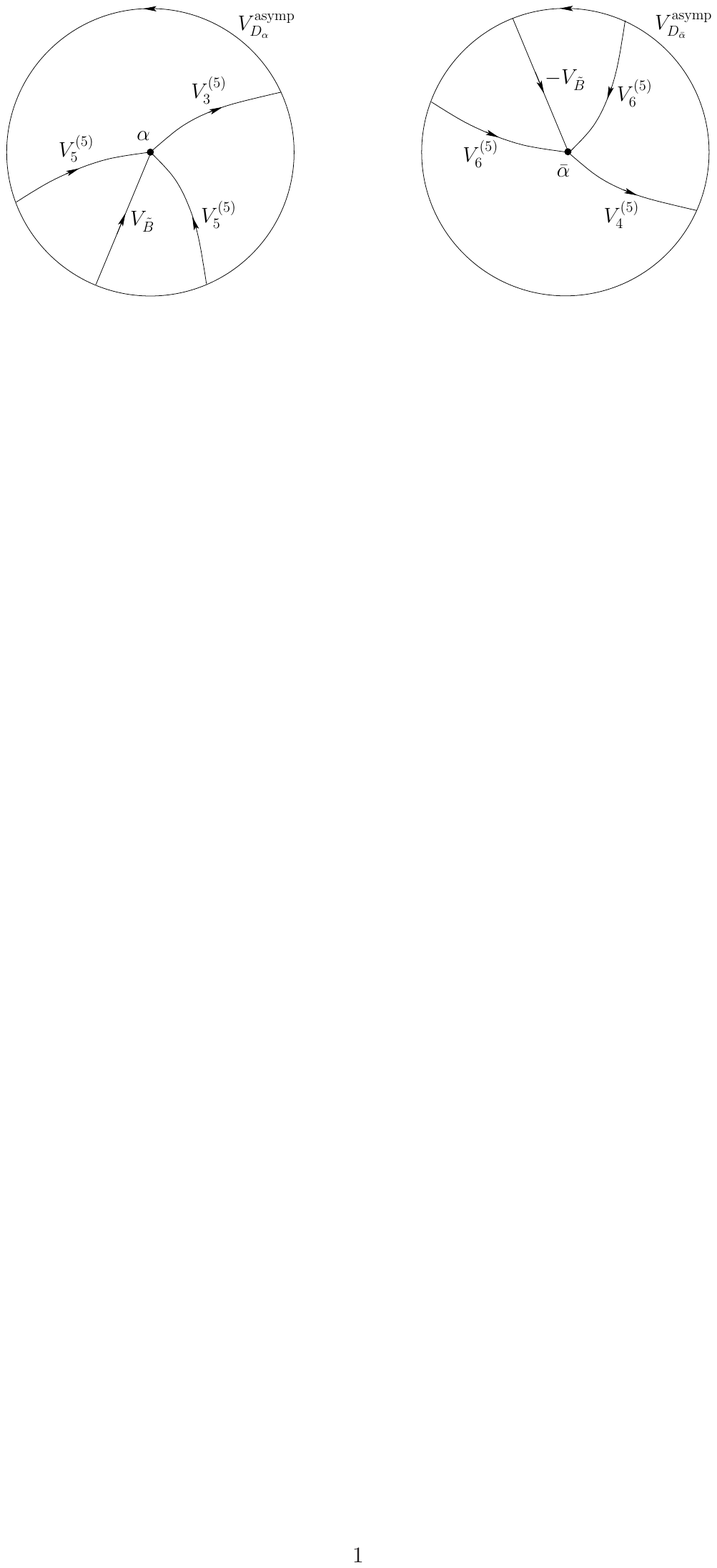}
\hskip 4mm
\includegraphics[width=0.235\textwidth]{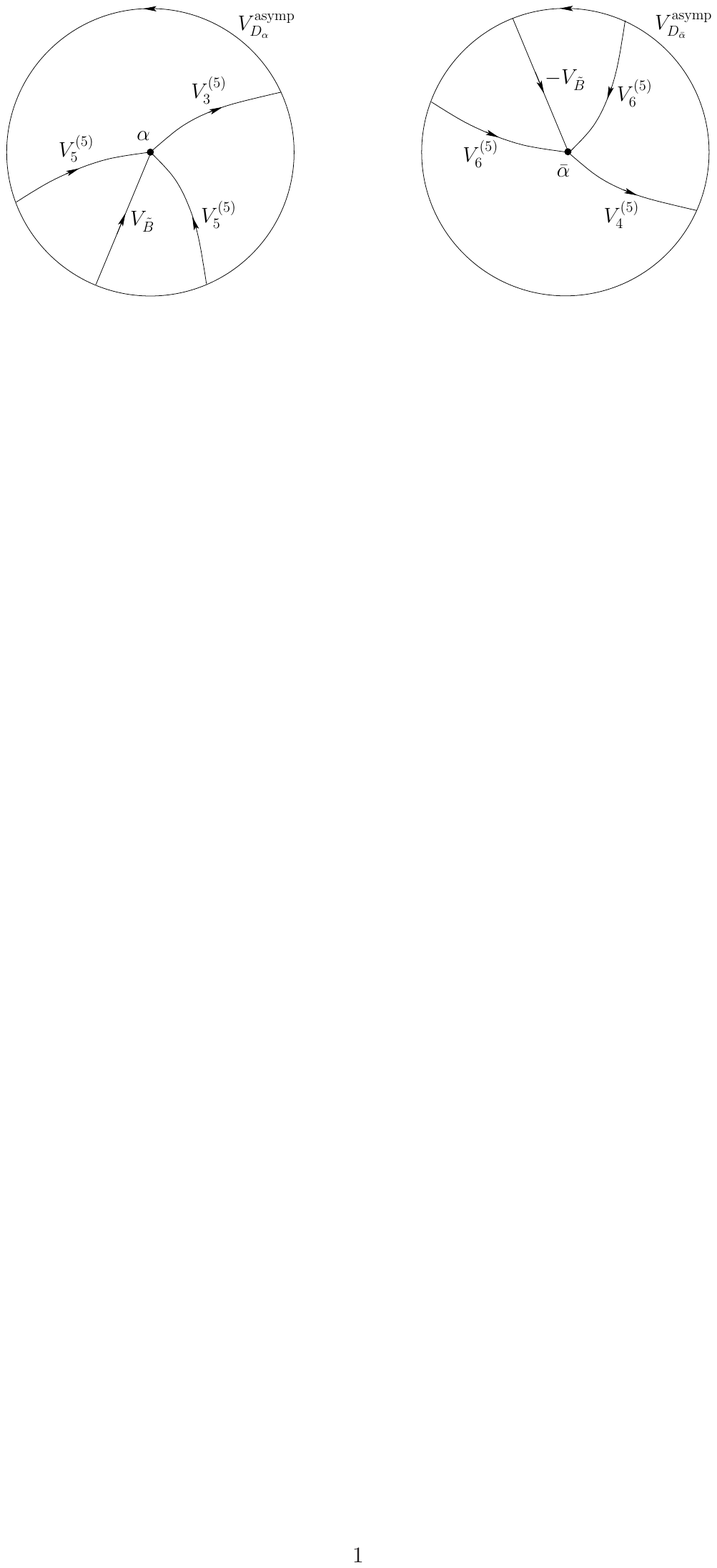}
\hskip 5mm
\includegraphics[width=0.235\textwidth]{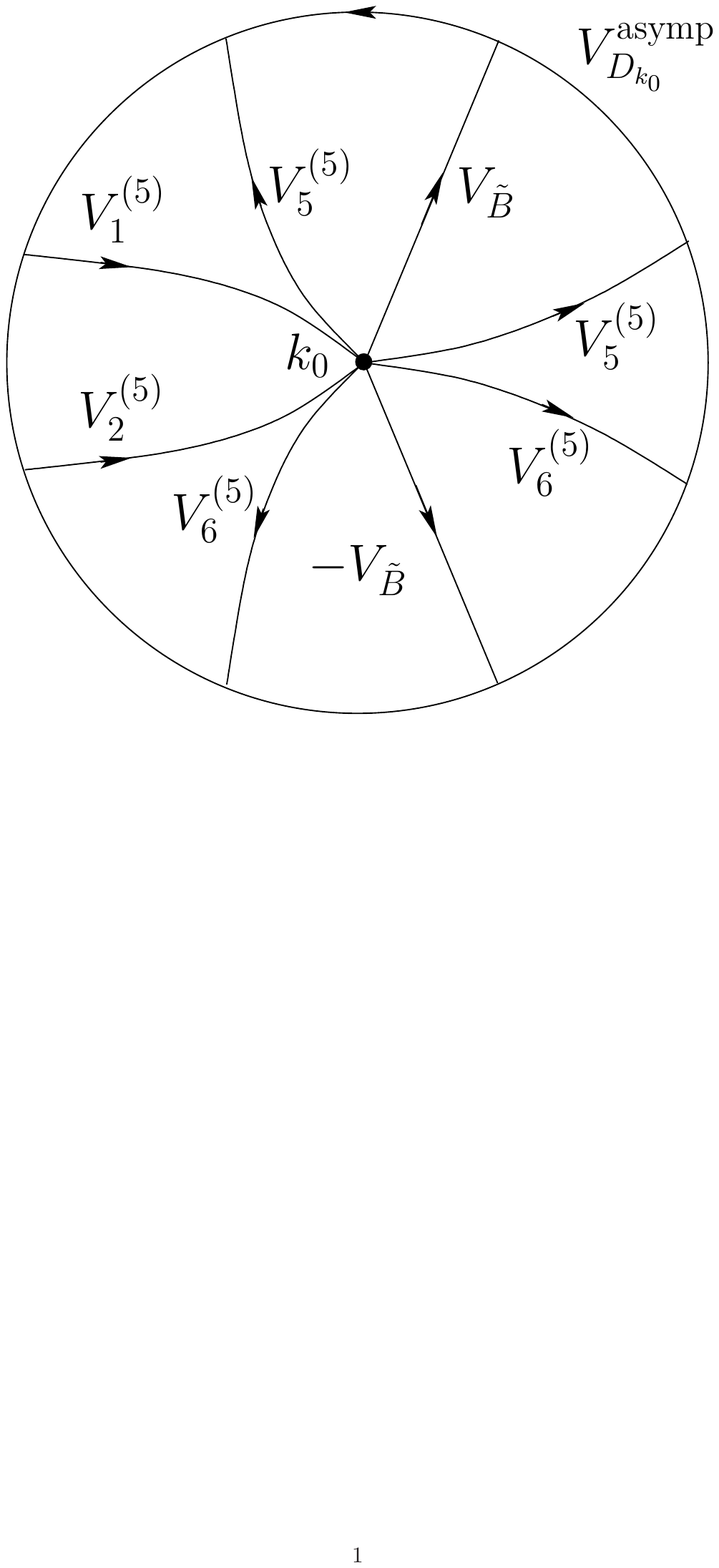}
\caption{The jumps of  $M^D$  inside the disks $\overline{D_\alpha^\varepsilon}$, $\overline{D_{\bar \alpha}^\varepsilon}$ and $\overline{D_{k_{o} }^\varepsilon}$ in the modulated elliptic wave region. }
\label{daefig}
\end{center}
\bigskip
\begin{center}
\includegraphics[scale=1.15]{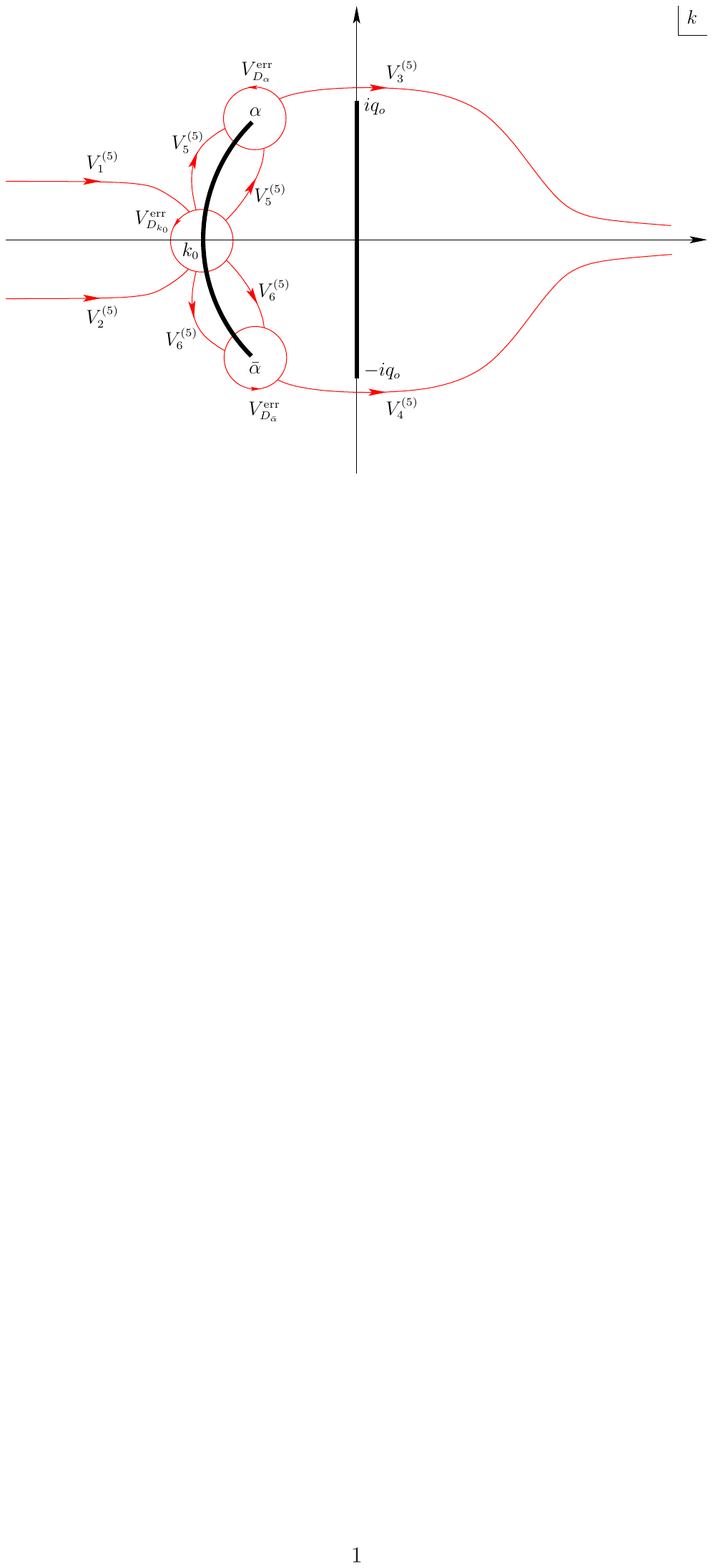}
\caption{The jumps of $M^\err$ in the modulated elliptic wave region.}
\label{mefig}
\end{center}
\end{figure}

\item
the function $M^D$ is analytic in $D_{k_{o} }^{\varepsilon}\cup D_{\alpha}^{\varepsilon}\cup D_{\bar \alpha}^{\varepsilon}\setminus  \cup_{j=1}^{8}  L_j$
with jumps (see Figure \ref{daefig})
\begin{equation}\label{rhpPr} 
M^{D+}(k)
=
M^{D-}(k)
V_j^{(5)},
\quad
k\in \hat L_j= L_j\cap \left(D_{k_{o} }^{\varepsilon}\cup D_{\alpha}^{\varepsilon}\cup D_{\bar \alpha}^{\varepsilon}\right),
\ 
j=1, \ldots, 8,
\end{equation}

\item
the function $M^\err$ is analytinc in $\mathbb C
\setminus (\cup_{j=1}^{6} \check L_j \cup \p D_{k_{o} }^{\varepsilon}\cup \p D_{\alpha}^{\varepsilon}\cup \p D_{\bar \alpha}^{\varepsilon})$ and satisfies the jump conditions 
\begin{subequations}\label{rhpEr}
\begin{equation}
M^{\err+}(k) = M^{\err-}(k) \,
V^\err,\quad  k\in \cup_{j=1}^{6} \check L_j \cup \p D_{k_{o} }^{\varepsilon}\cup \p D_{\alpha}^{\varepsilon}\cup \p D_{\bar \alpha}^{\varepsilon}
\label{rhpEbr}
\end{equation}
(see Figure \ref{mefig}), and the normalization condition
\begin{equation}
M^\err(k) = I +O(1/k),\quad k \to \infty,
\end{equation}
\end{subequations}
with $\check L_j = L_j\setminus ( \overline{D_{k_{o} }^{\varepsilon}}\cup \overline{D_{\alpha}^{\varepsilon}}\cup \overline{D_{\bar \alpha}^{\varepsilon}})$ and
\begin{equation}\label{VEdefr}
V^\err
=
\begin{cases}
M^B V_j^{(5)} (M^B)^{-1}, &k\in \check L_j,
\\
M^{\asymp-}(V_D^\asymp)^{-1}(M^{\asymp-})^{-1}, &k\in \p D_{k_{o} }^{\varepsilon}\cup \p D_{\alpha}^{\varepsilon}\cup \p D_{\bar \alpha}^{\varepsilon},
\end{cases}
\end{equation}
where $V_D^\asymp$ is the  jump of $M^\asymp$ across the circles $\p D_{k_{o} }^\varepsilon$, $\p D_{\alpha}^\varepsilon$ and $\p D_{\bar \alpha}^\varepsilon$, which is yet unknown.
\end{itemize}

\paragraph{Solution of the leading order asymptotic problem.}
We now determine $M^B$, which yields the leading order contribution to the asymptotics of the solution of the NLS equation as $t\to\infty$.
Our approach requires the introduction of appropriate theta functions, similarly to \cite{bv2007}.
In particular, recall that the function $\gamma$, defined by equation \eqref{gammadef}, gives rise to the Riemann surface $\Upsigma$ with sheets $\Upsigma_1$, $\Upsigma_2$ and cycles $\{ \upalpha, \upbeta\}$ depicted in Figure \ref{abcycles}.
Then, consider the Abelian differential
\begin{equation}\label{abelmb}
dw = \frac{C}{\gamma(k)}\, dk, 
\quad
C = 1 \left/ \oint_{\upbeta} \frac{dk}{ \gamma(k)} \right.,
\end{equation}
which is normalized so that
\begin{equation}\label{norm-a}
\oint_{\upbeta} dw = 1
\end{equation}
and has Riemann period  $\tau$ defined by
\begin{equation}\label{tau-a}
\tau = \oint_{\upalpha}  dw.
\end{equation}
Note that $C\in i\Real$. 
In addition,  it can be shown that $\tau\in i\Real^+$ (see, for example, \cite{fk1980}). 
Also, note that the normalization \eqref{norm-a}
and the definition \eqref{tau-a} of $\tau$ imply 
\begin{equation}\label{pap1}
\int_{-iq_{o} }^{iq_{o} } dw = \frac 12,\quad
\int_{\bar \alpha}^{-iq_{o} } dw = \frac \tau 2,\quad \ k\in\Upsigma_1.
\end{equation}
Next, we introduce the genus-1 theta function $\Theta$ as
\begin{equation}\label{thetjdef}
\Theta(k)
=
\sum_{\ell\in\mathbb Z} e^{2i\pi \ell  k+i\pi \ell^2\tau}
=
\theta_3(\pi k, e^{i\pi \tau}),
\end{equation}
where $\theta_3$ denotes the third Jacobi theta function defined by 
%
\begin{equation}\label{t3-def0}
\theta_3(z, \varrho)
=
\sum_{\ell\in\mathbb Z} e^{2i \ell  z} \varrho^{\ell^2}.
\end{equation}
The function $\theta_3$ is analytic for $(z, \varrho)\in \mathbb C\times  \left\{\mathbb C: |\varrho|<1\right\}$.
Hence, since $i\tau<0$, the function 
$\Theta$ is analytic for all $k\in\mathbb C$.
Moreover, $\Theta$ is even  and possesses the translation properties
\begin{equation}\label{thetaid}
\Theta(k+n) = \Theta(k),
\quad
\Theta(k+n \tau) 
=
e^{-2i\pi n k-i\pi n^2 \tau} \Theta(k),
\quad n\in\mathbb Z.
\end{equation}
We then define the vector-valued function $\@M$ as
\begin{equation}\label{mcaldef}
\@M (k, c) 
=
\left(
\frac{
 \Theta\big(-\frac{\Omega   t}{2\pi}+\frac{\omega}{2\pi}+\frac{i\ln \, (\frac{\bar q_-}{iq_{o} })}{\pi}+v(k)+c \big)}
 {  \sqrt{\frac{iq_{o} }{\bar q_-}}\ \Theta\left(v(k)+c \right)},
\frac{\Theta\big(-\frac{\Omega   t}{2\pi}+\frac{\omega}{2\pi}+\frac{i\ln\, (\frac{\bar q_-}{iq_{o} })}{\pi}-v(k)+c \big)}
{ \sqrt{\tfrac{\bar q_-}{iq_{o} }}\ \Theta\left(-v(k)+c \right)}
\right)
\end{equation}
where the constants $\Omega$ and $\omega$ are defined by equations~\eqref{Om-def} and \eqref{omr} as before, the constant $c$ is for now arbitrary  and will be determined in due course, and $v$ is the Abelian map
\begin{equation}\label{vdefr0}
v(k) 
=
\int_{iq_{o} }^k dw.
\end{equation} 
Note that  since $v$ is analytic for all  $k\notin B \cup \tilde B $ and $\Theta$ is analytic for all $k\in\mathbb C$,  the only sources of non-analyticity of the function $\@M$ in the $k$-plane are the branch cuts $B$ and $\tilde B $ and  the possible zeros of the functions $\Theta(v(k)\pm c)$.

The function $\@M$ has been introduced in order to satisfy the jumps and the normalization condition of the Riemann-Hilbert problem \eqref{rhpBr} for $M^B$.
To compute the jumps of $\@M$ note that, using the $\upalpha$- and $\upbeta$-cycles of the Riemann surface~$\Upsigma$, we have
\begin{subequations}\label{BBtilv}
\begin{align}
&v^+(k) 
+v^-(k)=n, \hskip 1.35cm n\in\mathbb Z, \ k\in B,
\\
&v^+(k)+v^-(k)= -\tau +n, \quad n\in\mathbb Z, \  k\in \tilde B.
 \end{align}
 \end{subequations}
Also, using the translation properties \eqref{thetaid} of $\Theta$, we find
\begin{subequations}\label{mcalbbtil}
\begin{align}
&\@M^+(k)
=
\@M^-(k)
\begin{pmatrix}
0 &-q_-/iq_{o} 
\\
\bar q_-/iq_{o}  &0
\end{pmatrix}, \hskip 7mm k\in B,\label{mcalb}
\\
&\@M^+(k)
=
\@M^-(k)
\begin{pmatrix}
0 &  e^{i(\Omega   t-\omega)}  
\\
e^{-i(\Omega   t-\omega)}  & 0
\end{pmatrix}, \quad k\in \tilde B. \label{mcalbtil}
\end{align}
\end{subequations}

The jumps  \eqref{mcalbbtil} differ from those of Riemann-Hilbert problem \eqref{rhpBr} only by a negative sign in the 12-entry. 
Thus, instead of $\@M$ we consider the matrix-valued function $N$ defined by
\begin{equation}\label{ncaldef}
N(k, c)
=
\frac 12
\begin{pmatrix}
\left[p(k)+p^{-1}(k)\right]\@M^{(1)}(k, c)
& 
i\left[p(k)-p^{-1}(k)\right]\@M^{(2)}(k, c)
\\
-i\left[p(k)-p^{-1}(k)\right]\@M^{(1)}(k, -c)
&
\left[p(k)+p^{-1}(k)\right]\@M^{(2)}(k, -c)
\end{pmatrix},
\end{equation}
where  
\begin{equation}\label{pdef}
p(k)
=
\left[
\frac
{\left(k-iq_{o} \right)\left(k-\alpha\right)}
{\left(k+iq_{o} \right)\left(k-\bar \alpha \right)}
\right]^{\frac 14}.
\end{equation}
The function $p$ is analytic away from the branch cuts $B$ and $\tilde B$, is nonzero away from $k=\alpha$ and $iq_{o}$ and has fourth root singularities 
at $k=\bar\alpha$ and $-iq_{o}$..
Moreover, $p$ has the same jump discontinuity across both $B$ and $\tilde B$, namely
\begin{equation}\label{pbbtil}
p^+(k)=ip^-(k), \quad k\in B\cup \tilde B.
\end{equation}
In addition, $p$ admits the large-$k$ expansion
\begin{equation}\label{pasym}
p(k) = 1-\frac{i(q_{o} +\alpha_\im)}{4k}+O\Big(\frac{1}{k^2}\Big),\quad k \to \infty.
\end{equation}

The jumps  \eqref{mcalbbtil} and \eqref{pbbtil} of $\@M$ and $p$ imply that $N$ has the following jumps across $B$ and $\tilde B$:
\begin{subequations}\label{ncalb}
\begin{align}
&N^+(k)
=
N^-(k) V_B,
\quad
k\in B,
\\
&N^+(k)
=
N^-(k) V_{\tilde B},
\quad
k\in \tilde B.
\end{align}
\end{subequations}
Therefore, the function $N$  defined by equation \eqref{ncaldef} has the same jumps as $M^B$ across $B$ and~$\tilde B$. 

Regarding the analyticity of $N$ away from $B$ and $\tilde B$, we observe that the only possible singularities of $N$ other than the usual branch points $\pm i q_{o}$, $\alpha$ and $\bar \alpha$ could arise from the functions 
$\Theta(v(k)\pm c)$ in the denominator of equation~\eref{mcaldef}. 
In this regard, we note that, from the definition \eqref{thetjdef} of $\Theta$  in terms of the Jacobi $\theta_3$ function, 
the zeros of $\Theta$ are simple and located at
$
\frac 12\left(1+\tau\right) +\mathbb Z+\tau\mathbb Z.
$
Moreover,  the function $p - p^{-1}$ has a unique finite simple zero on the cut complex $k$-plane given by
\begin{equation}\label{kstar}
k_*=\frac{q_{o} \alpha_\re}{q_{o} +\alpha_\im}.
\end{equation}
Then, setting the constant $c$, which was introduced as an arbitrary constant in the definition~\eqref{mcaldef} of $\@M$,
equal to
\begin{equation}
c=
v(k_*)+\frac 12\left(1+\tau\right)
\label{c-choice}
\end{equation}
implies that (i) the function $\Theta(v(k)-c)$ has a unique zero on the first sheet $\Upsigma_1$ of the Riemann surface~$\Upsigma$, located at the pre-image of $k_*$, and (ii) the function $\Theta(v(k)+c)$ is nonzero on $\Upsigma_1$
(see, for example, \cite{fk1980}, p. 290-291).
Hence, the choice \eqref{c-choice} of the constant $c$ ensures that the unique singularity of $\@M^{(2)}(k, c)$ and $\@M^{(1)}(k, -c)$ on $\Upsigma_1$ is compensated by the unique zero of $p-p^{-1}$, while $\@M^{(1)}(k, c)$ and $\@M^{(2)}(k, -c)$ are non-singular on $\Upsigma_1$. 
As a consequence, $N$ is analytic as a function in $\Upsigma_1$ away from the branch points.
Therefore,  $N$ is analytic for all $k\in \mathbb C\setminus (B\cup \tilde B)$.

Finally, we discuss the large-$k$ behavior of $N$. 
Combining the definition \eqref{ncaldef} of $N$ and the expansion \eqref{pasym} for $p$ we find
 \begin{equation}\label{ncalasym}
 \lim_{k \to\infty}
N(k, c)
=
 N(\infty, c)
 =
\begin{pmatrix}
\@M^{(1)}(\infty, c) &0
  \\
  0 &  \@M^{(2)}(\infty, -c)
 \end{pmatrix},
 \end{equation} 
where $\@M^{(j)}(\infty,c) = \lim_{k\to\infty}\@M^{(j)}(k,c)$ for $j=1,2$, 
and 
\begin{equation}\label{mcal-lim}
\@M(\infty, c)
=
\left(
\frac{
 \Theta\big(-\frac{\Omega   t}{2\pi}+\frac{\omega}{2\pi}+\frac{i\ln \, (\frac{\bar q_-}{iq_{o} })}{\pi}+v_\infty +c \big)}
 {  \sqrt{\frac{iq_{o} }{\bar q_-}}\ \Theta\left(v_\infty +c \right)},
\frac{\Theta\big(-\frac{\Omega   t}{2\pi}+\frac{\omega}{2\pi}+\frac{i\ln\, (\frac{\bar q_-}{iq_{o} })}{\pi}-v_\infty +c \big)}
{ \sqrt{\tfrac{\bar q_-}{iq_{o} }}\ \Theta\left(-v_\infty +c \right)}
\right)
\end{equation}
with
\begin{equation}\label{vdefr}
v_\infty  
=
\int_{iq_{o} }^\infty dw.
\end{equation}

In summary, both of the matrix-valued functions $M^B$ and $N$ are analytic away from $B\cup \tilde B $ and satisfy identical jumps across $B$ and $\tilde B$. Thus, we deduce that the unique solution of the Riemann-Hilbert problem \eqref{rhpBr} is
\begin{equation}
\label{mbsolr}
M^B(k)
=
e^{i\left(g_\infty -G_\infty t  \right)\sigma_3}
 N^{-1}(\infty, c)N(k, c),
\end{equation}
where the exponential term and the constant matrix $N(\infty, c)$ in equation~\eqref{mbsolr} are necessary so as to normalization of $M^B$ at infinity.

\paragraph{The asymptotic limit $t\to \infty$.}
Starting from formula \eqref{qsol} for the solution $q$ of the  focusing NLS equation \eqref{e:NLS} in terms of $M^{(0)}$ and applying the five successive deformations that lead   to $M^{(5)}$, we find
\begin{equation}\label{qsolm5r}
q(x, t)
=
-2i \big(M_1^{(5)}(x, t)\big)_{12}e^{i\left(g_\infty -G_\infty t  \right)},
\end{equation}
where $M_1^{(5)}$ is the $O(1/k)$ coefficient in the large-$k$ expansion of $M^{(5)}$, i.e.,
\begin{equation}
M^{(5)}(x, t, k) = e^{i\left(g_\infty -G_\infty t  \right)\sigma_3} + \frac{M_1^{(5)}(x, t)}{k}+O \Big(\frac{1}{k^2}\Big),\quad k \to \infty.
\end{equation}
Recalling that for large $k$ the decomposition \eqref{m5ear} of $M^{(5)}$ involves $M^B$ and $M^\err$, we have
\begin{equation}\label{qsolmear}
q(x, t)
=
-2i \left(M_1^{B}(x, t)e^{i\left(g_\infty -G_\infty t  \right)}+M_1^{\err}(x, t)\right)_{12}.
\end{equation}
Combining equation \eqref{mbsolr} 
and the expansion \eqref{pasym} for $p$, we obtain
\begin{equation}\label{qsol-mbr}
\left(M_1^B(x, t)\right)_{12}
=
\frac 12
\left(q_{o} +\alpha_\im\right) 
\frac{\@M^{(2)}(\infty, c)}{\@M^{(1)}(\infty, c)}
\,
e^{i\left(g_\infty -G_\infty t  \right)}.
\end{equation}
Moreover, as in the plane wave region, the term  $M_1^{\err}$ admits the estimate
\begin{equation}\label{m1e-est1r}
\left| M_1^{\err}(x, t)\right|
=
O\big(t^{-\frac 12}\big),
\quad t\to \infty,
\end{equation}
which can be established by constructing appropriate parametrices near the points $\alpha, \bar \alpha, k_{o} $ and then employing the techniques presented in the Appendix. 
Since the construction of these parametrices is similar to the relevant construction presented in \cite{bv2007}, it is omitted here for brevity.

Overall, inserting equation \eqref{qsol-mbr} and estimate \eqref{m1e-est1r} in equation \eqref{qsolmear}, we  conclude that the long-time asymptotic behavior of the solution  of the   focusing NLS equation \eqref{e:NLS} in the modulated elliptic wave region is given by
\begin{multline}
\label{qsol-genus1}
q(x, t)
=
\frac{q_{o} \left(q_{o} +\alpha_\im\right) }{\bar q_-}
\frac{\Theta\big(\!-\frac{\Omega   t}{2\pi}+\frac{\omega}{2\pi}+\frac{i\ln\, (\frac{\bar q_-}{iq_{o} })}{2\pi}-v_\infty +c \big)
\,
\Theta\left(v_\infty +c \right)}
{\Theta\big(\!-\frac{\Omega   t}{2\pi}+\frac{\omega}{2\pi}+\frac{i\ln\, (\frac{\bar q_-}{iq_{o} })}{2\pi}+v_\infty +c \big) 
\,
\Theta\left(-v_\infty +c \right)}
\,
e^{2i\left(g_\infty -G_\infty t  \right)}
\\
+
O\big(t^{-\frac 12}\big),
\quad t\to \infty,
\end{multline}
where the constants $\alpha_\im$, $\Omega$,
 $G_\infty $,    $\omega$, $g_\infty $, $c$ and $v_\infty $   are given by equations \eqref{ak0system},
 \eqref{Om-def}, \eqref{Ginf}, \eqref{omr},  \eqref{ginfr2},  \eqref{c-choice} and \eqref{vdefr} respectively.

The asymptotic solution \eqref{qsol-genus1} can actually be expressed in a simpler form. 
Specifically, similarly to~\cite{jm2013}, we consider
\begin{equation}
f(k)
=
1-\frac{\left(k-\alpha\right)\left(k-iq_{o} \right)}{\gamma(k)}
 = p(k)(p^{-1}(k) - p(k))
\end{equation}
as a function on the Riemann surface $\Upsigma$  such that $\gamma(k)\sim k^2$ as $k\to \infty_1$, where $\infty_1$ denotes the point at infinity on~$\Upsigma_1$. 
The function $f$ has singularities at 
$\bar \alpha$ and $-iq_{o} $, and zeros at $\infty_1$ and at
the finite point $k_*$, which was introduced earlier by equation \eqref{kstar} as   the unique zero    of the function $p-p^{-1}$.
Therefore, $f$ is a meromorphic function on $\Upsigma$ with divisor $(f)$ equal to
\begin{equation}\label{div-def}
(f) = k_*+\infty_1-(-iq_{o} )-\bar \alpha.
\end{equation}
Since the divisor $(f)$ of a meromorphic function is principal, by Abel's theorem
(e.g., see~\cite{bbeim1994}, Theorem 2.14) it follows that 
$v((f))=0$. 
Or, equivalently,
 \begin{equation}
v(k_*) = v(-iq_{o} )+v(\bar \alpha)-v(\infty_1).
 \end{equation}
Recalling the definition of the cycles (see Figure~\ref{abcycles}) and using equations \eqref{pap1}, we find that $v(-iq_{o})=\frac 12+\mathbb Z$ and 
$v(\bar \alpha)=\frac 12-\frac \tau 2+\mathbb Z$
 on $\Upsigma_1$.
Hence, $v(k_*) = -\frac \tau2 -v_\infty$ on $\Upsigma_1$ and then
equation \eqref{c-choice} implies
 \begin{equation}
 c  = \frac 12 -v_\infty +\mathbb Z+\tau \mathbb Z.
 \end{equation}
 Therefore, the leading order asymptotic solution in equation~\eqref{qsol-genus1} simplifies to
\begin{equation}\label{qsol-genus1-simple}
q_\mathrm{asymp}(x, t)
=
\frac{q_{o} \left(q_{o} +\alpha_\im\right) }{\bar q_-}
\
\frac{\Theta\left(\frac 12 \right)
\Theta\left(\frac{1}{2\pi}
\left[\Omega   t-\omega-i\ln\Big(\frac{\bar q_-}{iq_{o} }\Big)\right]+2v_\infty - \frac 12 \right)}
{
\Theta\left(2v_\infty -\frac 12\right)
\Theta\left(\frac{1}{2\pi}
\left[\Omega   t-\omega-i\ln\Big(\frac{\bar q_-}{iq_{o} }\Big)\right] - \frac 12 \right)}
\,
e^{2i\left(g_\infty -G_\infty t  \right)},
\end{equation}
where the real constants $\alpha_\im$, $G_\infty $, 
$\omega$ and $g_\infty$ are given by equations \eqref{ak0system}, \eqref{Ginf}, \eqref{omr} and \eqref{ginfr2} respectively, 
the complex constant $v_\infty$ is defined by equation \eqref{vdefr}, 
and the real constant $\Omega$ is defined in equation~\eqref{Om-def}.
Finally, note that the constant $\Omega$ can be calculated explicitly in terms of elliptic functions. 
Indeed, using standard results from the theory of Abelian differentials of the second kind, we obtain the expression
\begin{equation}\label{Om-fin}
\Omega
=
\frac{\pi \left|\alpha+iq_{o} \right|}{K(m)}
\left(\xi-2\alpha_\re\right),
\end{equation}
where $K(m)$ 
is the complete elliptic integral of the first kind, defined by
\begin{equation}\label{km-def}
K( m)
=
\int_0^{\frac \pi 2} \frac{dz}{\sqrt{1-m^2\sin^2 z}},
\end{equation}
with the elliptic modulus $m$ equal to
\begin{equation}\label{ell-mod}
m =  \sqrt{1-\left|\frac{\alpha-i q_{o} }{\alpha+iq_{o} }\right|^2}
=
\frac{2\sqrt{q_{o} \alpha_\im}}{\left|\alpha+i q_{o} \right|}\,.
\end{equation}
The proof of Theorem \ref{mew-t} is thus complete.


\section{Representation via elliptic functions: proof of theorem \ref{mew-ell-t}}  
\label{ell-sec}
  
In this section  we prove Theorem \ref{mew-ell-t}, i.e., we express the modulus of the asymptotic solution 
\eqref{qsol-genus1-simple} in the modulated elliptic wave region in terms of  elliptic functions.

Recall that the function $\Theta$ appearing in equation~\eqref{qsol-genus1-simple} was defined in formula~\eqref{thetjdef} in terms of 
the third Jacobi theta function $\theta_3$, 
with the period $\tau$ defined by equation \eqref{tau-a} and the nome $\varrho$  of $\theta_3$ set to 
\begin{equation}\label{nome-def0}
\varrho
=
e^{i \pi \tau}.
\end{equation}
Furthermore,  
using  the  $\upalpha$- and $\upbeta$-cycles depicted in Figure \ref{abcycles}, we express $\tau$ in the form
\begin{equation}\label{tau-m}
\tau
=
{iK\big(\sqrt{1-m^2}\big)}/{K\left(  m\right)},
\end{equation}
where $K(m)$ and $m$ defined by equations~\eqref{km-def} and~\eqref{ell-mod} as before.
Hence, equation \eqref{nome-def0} becomes 
\begin{equation}\label{nome-def}
\varrho
=
e^{-\pi{K\left(\sqrt{1-m^2}\right)}/{K\left(m\right)}}.
\end{equation}
It will also be useful to introduce the three other Jacobi theta functions 
\begin{subequations}
\label{thetas-def}
\begin{gather}
\theta_1(z, \varrho)
=
\sum_{\ell \in\mathbb Z} (-1)^{\ell-\frac 12}
e^{(2\ell+1)i z} \varrho^{(\ell+\frac 12)^2},
\label{t1-def}
\\
\theta_2(z, \varrho)
=
\sum_{\ell \in\mathbb Z} e^{(2\ell+1)i z} \varrho^{(\ell+\frac 12)^2},
\qquad
\theta_4(z, \varrho)
=
\sum_{\ell \in\mathbb Z} (-1)^n e^{2i\ell z} \varrho^{\ell^2}.
\label{t4-def}
\end{gather}
\end{subequations}
\label{t2-def}
With $\varrho$ given by equation~\eqref{nome-def}, 
the above theta functions are associated with the Jacobi elliptic function $\textrm{sn}$ and the elliptic modulus $m$ via  the relations
\begin{equation}\label{jef-def}
\textrm{sn}(z, m) = \frac{\theta_3(0)}{\theta_2(0)}\, \frac{\theta_1(z\theta_3^{-2}(0))}{\theta_4(z\theta_3^{-2}(0))},
\quad
m 
 =
 \frac{\theta_2^2(0)}{\theta_3^2(0)}.
\end{equation}
Hereafter, the nome $\varrho$ will be suppressed from the arguments of the theta functions for brevity.
We are now ready to express the asymptotic solution in terms of elliptic functions.

We begin with the constant~$v_\infty$, defined in equation~\eqref{vdefr}.
Note that by the definition of the $\upbeta$-cycle we have
\begin{equation}
2v_\infty -\frac 12
=
\int_{-iq_{o} }^\infty dw
+
\int_{iq_{o} }^\infty dw
+\mathbb Z,
\end{equation}
hence $2v_\infty -\frac 12$  is imaginary modulo $\mathbb Z$. 
Moreover, we introduce the real quantities
\begin{equation}\label{fipsi-def} 
\phi = \tfrac{1}{2}\left[\Omega   t-\omega-i\ln\left(\tfrac{\bar q_-}{iq_{o} }\right)\right], 
\quad 
\psi = -i\pi \left(2v_\infty -\tfrac 12 \right),
\end{equation}
with $\Omega$ given by equation~\eqref{Om-fin}.
Then, we note that the identity 
$\theta_3(k, \varrho)
=
\theta_4\left(k+\frac{\pi}{2}, \varrho\right)$,
together with the evenness of $\theta_3$, implies 
$
\theta_3(\frac \pi2) 
=
 \theta_3(-\frac \pi2) 
 =
 \theta_4(0)
 $
 and
 $
 \theta_3(\varphi - \frac \pi2)
 =
 \theta_4(\varphi).
 $
Using these relations as well as the relation \eqref{thetjdef} between $\Theta$ and  $\theta_3$, we can write the leading order asymptotic solution \eqref{qsol-genus1-simple} in the form
\begin{equation}\label{qred1}
q_\mathrm{asymp}(x, t)
=
\frac{q_{o} \left(q_{o} +\alpha_\im\right) }{\bar q_-}
\
\frac{\theta_4 (0)
\theta_3 (\phi+i\psi )}
{
\theta_3(i\psi)
\theta_4(\phi)}
\,
e^{2i\left(g_\infty -G_\infty t  \right)}.
\end{equation}
Moreover, recalling that the constants $g_\infty , G_\infty $ are real, and employing the addition formula
\begin{equation}
\theta_3 (\phi+i\psi )\theta_3 (\phi-i\psi )\theta_4^2(0)
=
\theta_4^2(\phi)\theta_3^2(i\psi)-\theta_1^2(\phi)\theta_2^2(i\psi),
\end{equation}
we have
\begin{equation}
\left| q_\mathrm{asymp}(x, t) \right|^2
=
\left(q_{o} +\alpha_\im\right)^2 
\left[
1
-
\frac{\theta_1^2(\phi)\theta_2^2(i\psi)}
{\theta_4^2 (\phi)\theta_3^2(i\psi )}
\right].
\label{prin}
\end{equation}
Thus, according to definition \eqref{jef-def} we have
\begin{equation}
\left| q_\mathrm{asymp}(x, t) \right|^2
=
\left(q_{o} +\alpha_\im\right)^2 
\left[
1
-
m\,
 \frac{ \theta_2^2(i\psi)}
{ \theta_3^2(i\psi )}
 \,
 \textrm{sn}^2(\theta_3^{2}(0)\phi, m) 
\right].
\label{ded0}
\end{equation}
It now remains to compute the ratio of the theta functions in equation~\eqref{ded0} as well as the constant $\theta_3^2(0)$.
For the latter, we simply recall the standard formulae~\cite{byrd1971}
\begin{equation}\label{zeros-id3}
\theta_2^2(0)
=
\frac{2m K(m)}{\pi},
\quad
\theta_3^2(0)
=
\frac{2K(m)}{\pi},
\quad
\theta_4^2(0)
=
\frac{2\sqrt{1-m^2} K(m)}{\pi},
\end{equation}
where $\theta_2^2(0)$ and $\theta_4^2(0)$ were also included for later use.
The calculation of the ratio ${ \theta_2^2(i\psi)}/{ \theta_3^2(i\psi )}$, on the other hand, is more involved.
We first note that, using the identities
\begin{subequations}\label{theta-ids}
\begin{align}
\theta_1(k, \varrho)
&=
-i e^{ik+\frac{i\pi\tau}{4}} \theta_4\left(k+\frac{\pi \tau}{2}, \varrho\right),
\\
\theta_2(k, \varrho)
&=
 \theta_1\left(k+\frac{\pi}{2}, \varrho\right),
\\
\theta_3(k, \varrho)
&=
\theta_4\left(k+\frac{\pi}{2}, \varrho\right),
\end{align}
\end{subequations}
we have
\begin{equation}
\frac{\theta_2^2(i\psi)}{\theta_3^2(i\psi )}
=
\frac{\theta_2^2(2y-\frac \pi 2)}{\theta_3^2(2y-\frac \pi 2)}
=
\frac{\theta_1^2(2y)}{\theta_4^2(2y)},
\quad y = \pi v_\infty .
\end{equation}
Subsequently, using the duplication formulae
\begin{gather}
\label{dup}
\theta_1(2y)\theta_2(0)\theta_3(0)\theta_4(0)
=
2\theta_1(y)\theta_2(y)\theta_3(y)\theta_4(y),
\quad
\theta_4(2y)
\theta_4^3(0)
=
\theta_3^4(y)-\theta_2^4(y),
\end{gather}
we find
\begin{equation}
\label{ratios0}
\frac{\theta_1^2(2y)}{\theta_4^2(2y)}
=
\frac{4\theta_4^4(0)}{\theta_2^2(0)\theta_3^2(0)}
\frac{\theta_1^2(y)}{\theta_2^2(y)}\frac{\theta_3^2(y)}{\theta_2^2(y)}
\frac{\theta_4^2(y)}{\theta_2^2(y)}
\left(\left[\frac{\theta_3(y)}{\theta_2(y)}\right]^4-1\right)^{-2}.
\end{equation}
The three ratios
\begin{equation}\label{ratios}
\frac{\theta_1^2(y)}{\theta_2^2(y)},
\quad
\frac{\theta_3^2(y)}{\theta_2^2(y)},
\quad
\frac{\theta_4^2(y)}{\theta_2^2(y)}
\end{equation}
appearing in equation~\eref{ratios0}
can be computed in a similar way. 
Here we present a detailed derivation of the formula for the third ratio 
and we provide the relevant formulae for the first two ratios for brevity.

The idea is to express the desired ratio in terms of a meromorphic function, similarly to \cite{j2014}.
Specifically, to compute the last ration in equation~\eref{ratios}, we consider the function
\begin{equation}\label{f42-def}
f_{42}(k)
=
e^{i\pi \int_{iq_{o} }^k dw-\frac{i\pi \tau}{4}}
\frac{\Theta\left(\int_{ \alpha}^k dw -\frac 12 - \frac \tau 2\right)}
{\Theta\left(\int_{-iq_{o} }^k dw -\frac 12 - \frac \tau 2\right)}
\end{equation}
 as a function on the Riemann surface $\Upsigma$. 
Noting that on the first sheet $\Upsigma_1$ we have 
\begin{equation}\label{comps2}
\int_{\alpha}^k dw
=
\frac \tau 2+ \int_{iq_{o} }^k dw + \mathbb Z,
\qquad
\int_{-iq_{o} }^k dw
=
 \frac 12
+
\int_{iq_{o} }^k dw+ \mathbb Z,
\end{equation}
and recalling also the periodicity properties of $\Theta$, we find
\begin{equation}\label{f42b}
f_{42}(k)
= 
e^{i\pi \int_{iq_{o} }^k dw-\frac{i\pi \tau}{4}}
\,
\frac{\theta_3\left(\pi \int_{iq_{o} }^k dw - \frac \pi 2 \right)}
{\theta_3\left(\pi \int_{iq_{o} }^k dw  - \frac{\pi \tau}{2}\right)}.
\end{equation}
Employing the second and the third of the identities \eqref{theta-ids}, we then obtain
\begin{equation}\label{f42-1}
f_{42}(k)
=
\frac{\theta_4\left(\pi \int_{iq_{o} }^k dw \right)}
{\theta_2\left(\pi \int_{iq_{o} }^k dw  \right)}.
\end{equation}
Observe that $f_{42}$ evaluated at $k=\infty_1$ is equal to the ratio $\theta_4(y)/\theta_2(y)$ that we wish to compute.

Furthermore, note that the function $f_{42}$ defined by equation \eqref{f42-def} is meromorphic on  $\Upsigma_1$, with a simple pole at $k=-iq_{o} $ and a simple zero at $k=\alpha$. Hence,  $f$ may be expressed in the form
\begin{equation}\label{f42-2}
f_{42}(k)
=
A_{42}(k) \frac{\left(k- \alpha\right)^{\frac 12}}{\left(k+iq_{o}\right)^{\frac 12}},
\end{equation}
where $A_{42}$ is a holomorphic function  bounded at $\infty_1$.
Now note that $f_{42}^2(k)$ is also a meromorphic function on the complex $k$-plane.  Hence $A_{42}^2(k)$ must be a constant by Liouville's theorem. 
Therefore, to determine the function $f_{42}$ [and hence the ratio $\theta_4(y)/\theta_2(y)$] it remains to determine the constant $A_{42}$. 

The computation of $A_{42}$ is done by evaluating the residue of $f_{42}$ at $k=-iq_{o} $ in two different ways. 
First, using the representation \eqref{f42-2} we find
\begin{equation}\label{res42-1}
\textrm{Res}\left[ f_{42}(k), -iq_{o} \right]
=
A_{42} \cdot i\left(iq_{o}  + \alpha\right)^{\frac 12}.
\end{equation}
Alternatively, employing the form \eqref{f42-1} and noting that 
$
\theta_2\left(\pi \int_{iq_{o} }^k dw  \right) = R(k) \left(k+iq_{o} \right)^{\frac 12}
$
for some function $R$ such that $R(-iq_o)\neq 0$, we obtain
\begin{equation}\label{res-temp}
\textrm{Res}\left[ f_{42}(k), -iq_{o} \right]
=
\frac{\theta_4\left(\pi \int_{iq_{o} }^{iq_{o} } dw \right)}
{R(-iq_{o} )}
=
\frac{\theta_4\!\left(\frac \pi 2\right)}
{R(-iq_{o} )}.
\end{equation}
Actually, we have
\begin{equation}
\left[R(-iq_{o} )\right]^2
=
\frac{\p}{\p k}\theta_2^2\left(\pi \int_{iq_{o} }^k dw  \right)\bigg|_{k=-iq_{o} }
=
\frac{\left[2\pi C\, \theta_2'\!\left(\frac \pi 2 \right)\right]^2}{-2iq_{o} \left(iq_{o} + \alpha\right)\left(iq_{o} +\bar \alpha\right)},
\label{resR}
\end{equation}
where the imaginary constant $C$ is defined by equation \eqref{abelmb}.
Thus, equation \eqref{res-temp} becomes
\begin{equation}\label{res42-2}
\textrm{Res}\left[ f_{42}(k), -iq_{o} \right]
=
\theta_4\!\left(\tfrac \pi 2\right)
\left[
-
\frac{2iq_{o} \left(iq_{o} +\alpha\right)\left(iq_{o} +\bar \alpha\right)}
{\left[2\pi C\, \theta_2'\left(\frac \pi 2 \right)\right]^2}
\right]^{\frac 12}.
\end{equation}
Matching the expressions \eqref{res42-1} and \eqref{res42-2}, we deduce
\begin{equation}
A_{42}^2
=
\frac{2iq_{o} \left(iq_{o} +\bar \alpha\right)\theta_4^2\!\left(\frac \pi 2 \right)}
{\left[2\pi C\, \theta_2'\!\left(\frac \pi 2 \right)\right]^2}.
\end{equation}
Finally, evaluating equations \eqref{f42-1} and \eqref{f42-2} at $k=\infty_1$ and using the identities \eqref{theta-ids}, we obtain
\begin{equation}\label{f42-50}
\frac{\theta_4^2(y)}{\theta_2^2(y)}
=
\frac{ 2iq_{o} \left(iq_{o} +\bar \alpha\right)\theta_3^2(0)}
{\left[2\pi C\, \theta_1'(0)\right]^2}.
\end{equation}
The constant $C$ defined by equation \eqref{abelmb} can be expressed in terms of the complete elliptic integral of the first kind $K(m)$ via the formula
 \begin{equation}\label{C-id}
C
 =
\frac{i\left|\alpha+iq_{o} \right|}{4K(m)}.
 \end{equation}
Thus, using also the identity
$
\theta_1'(0)
=
\theta_2(0)
\theta_3(0)
\theta_4(0)
$
we can write expression \eqref{f42-50} in the form
\begin{equation}\label{f42-5}
\frac{\theta_4^2(y)}{\theta_2^2(y)}
=
-\frac{ 2iq_{o} \left(iq_{o} +\bar \alpha\right)}
{m  \sqrt{1-m^2} \left|\alpha+iq_{o} \right|^2}.
\end{equation}

Computations identical to the above yield  the following expressions for the other two ratios in \eqref{ratios}:
\begin{equation}\label{f12-5}
 \frac{\theta_1^2(y)}{\theta_2^2(y)}
=
-\frac{\left(iq_{o} +\alpha\right)\left(iq_{o} +\bar \alpha\right)}
{ \sqrt{1-m^2}\left|\alpha+iq_{o} \right|^2 },
\qquad
\frac{\theta_3^2(y)}{\theta_2^2(y)}
=
-\frac{2iq_{o}  \left(iq_{o} +\alpha\right)}
{m\left|\alpha+iq_{o} \right|^2}.
\end{equation}
Inserting expressions \eqref{f42-5} and \eqref{f12-5} in equation
\eqref{ratios0} we  obtain
\begin{equation}\label{t14sq}
\frac{\theta_1^2(2y)}{\theta_4^2(2y)}
=
\frac{1}{m} \frac{4q_{o} \alpha_\im }{\left(q_{o} +\alpha_\im\right)^2}.
\end{equation}
Inserting in turn this expression into equation \eqref{ded0} yields expression \eqref{sqmod-finn-t} for the modulus of the leading order solution of the focusing NLS equation \eqref{e:NLS} in the modulated elliptic wave region.

The proof of Theorem \ref{mew-ell-t} is complete.


\section{Discussion and concluding remarks}
\label{conc-sec}

We have shown that,
for all initial conditions that satisfy the hypotheses of Theorem~\ref{pw-t}, the long-time asymptotics
decomposes the $xt$-plane into two plane wave regions (in each of which the solution is approximately equal to the background value up to a phase)
separated by a central region in which the leading-order behavior is given by a slow modulation of the traveling wave solutions of the focusing NLS equation.
Note that the spatial structure of the asymptotic solution (both in the plane wave regions and in the modulated elliptic wave region) is
independent of the initial conditions of the problem,
and that the initial conditions only determine the slowly varying offset $X$ of the elliptic solution (via the reflection coefficient),
whereas the envelope of the modulated elliptic wave is independent of it.
Thus,
the long-time asymptotics of generic localized perturbations of the constant background in modulationally unstable media on the infinite line displays universal behavior.
In this sense, \textit{the asymptotic stage of modulational instability is universal.}

Importantly, the results of the present work also show that, 
even though the jumps along the branch cut $B$ of the original Riemann-Hilbert problem \eqref{rhp0}  grow exponentially  with $t$, the solution of this Riemann-Hilbert problem --- and hence the solution of the focusing NLS equation --- remains bounded  in the limit $t\rightarrow \infty$. 
We note that this is a fairly common result of the analysis of Riemann-Hilbert problems via the Deift-Zhou method, 
as the factorizations of the jump matrices and the deformations of the jump contours allow one to eliminate all the terms that exhibit the exponential growth
in the jump conditions.

We also reiterate that a special case of the modulated elliptic wave
(without the slowly varying offset parameter $X$) had been previously obtained using Whitham theory in \cite{Kam-book}.
(The motion of the Riemann invariants had also been previously obtained in \cite{el1993}.)
Compared to those works, however, the results of the present results represent a significant step forward in that:
(i)~they establish rigorously the validity of the modulated elliptic solution as the long-time asymptotic state of the problem;
(ii)~they establish rigorously that the solution of the focusing NLS equation with nonzero boundary conditions at infinity
remains bounded for all times;
(iii)~they establish the universality of the modulated elliptic solution as the asymptotic state of a large class of perturbations
of the constant background.
At the same time, it is remarkable that the particular solution of the genus-1 Whitham system obtained in \cite{el1993} 
is relevant for the long-time asymptotics  
even though the Whitham equations for the focusing NLS equation are elliptic
(which implies that the corresponding initial value problem is ill-posed for generic initial data).

The results of this work open up a number of interesting problems, both from a mathematical and a physical point of view.
One of them is the generalization of the long-time asymptotics to initial data for which the reflection coefficient
is not analytic in a neighborhood of the continuous spectrum.
This situation corresponds to the case of initial conditions that tend to the contant background algebraically as $x\to\pm\infty$.
We expect that such a generalization can be achieved by employing rational approximations, along similar lines to what is done in the case of zero boundary conditions
\cite{dz1995}.

Another interesting problem
is the generalization of the long-time asymptotics to initial conditions which lead to the presence of a discrete spectrum.
Apart from the intrinsic interest of this problem from a mathematical point of view,
the analysis of such kinds of problems is also important from a physical point of view,
since it will allow one for the first time a study of the interactions between solitons and radiation
in a modulationally unstable medium.
The same framework will also allow one to address another related important open question,
concerning the stability of solitons on nonzero background in modulationally unstable media.


\section*{Appendix: estimation of the error}
\setcounter{equation}0
\def\theequation{A.\arabic{equation}}

%
We now establish estimate \eqref{m1e-est1} for the term $M^\err$ that appears in formula \eqref{qsolmea} for the leading order asymptotics in the plane wave region. 
Note that the proof also carries over to the estimate \eqref{m1e-est1r} for $M^\err$ in the modulated elliptic wave region 
once the appropriate parametrices are employed.
(See~\cite{bv2007} for the construction of such parametrices.)

Since $M^\err$ solves the Riemann-Hilbert problem \eqref{rhpE}, using Plemelj's formulae we may express the $O(1/k)$ coefficient
$M_1^{\err}$ in the large-$k$ expansion of $M^{\err}$ as
$$
M_1^{\err}(x, t)
=
-\frac{1}{2i\pi}\int_{\cup_j \check L_j\cup \p D_{k_1}^\varepsilon}
M^{\err-}(\nu)\left[V^\err(\nu)-I\right] d\nu.
$$
By the Cauchy-Schwarz inequality we then find
\begin{equation}\label{m1e-est0}
\left| M_1^{\err}(x, t)\right|
\leqslant
\frac{1}{2\pi}
\Big(
\!
\left\| M^{\err-}\! -I  \right\|_{L_k^2(\cup_j \check L_j\cup \p D_{k_1}^\varepsilon)}
\left\| V^\err\! -I \right\|_{L_k^2(\cup_j \check L_j\cup \p D_{k_1}^\varepsilon)}
+
\left\|V^\err\! -I  \right\|_{L_k^1(\cup_j \check L_j\cup \p D_{k_1}^\varepsilon)}
\!
\Big).
\end{equation}

We will estimate each of the terms involved in the right-hand side of the above inequality separately.
We begin with $\left\| V^\err-I \right\|_{L_k^p(\check L_j)}$.
Along  the contours $\check L_j$, we have
$
V^\err-I
=
M^B \big( V_j^{(4)}-I\big) (M^B)^{-1}.
$
Thus, we find
\begin{equation}
\left\| V^\err-I \right\|_{L_k^p(\check L_j)}
\leqslant
\left\| M^B \right\|_{L_k^\infty(\check L_j)}
\big\| V_j^{(4)}-I \big\|_{L_k^p(\check L_j)}
\left\| (M^B)^{-1} \right\|_{L_k^\infty(\check L_j)}.\label{veiest}
\end{equation}
Since $M^B$ is analytic away from the branch cut $B$ 
and $M^B=O(1)$ as $k \to \infty$, we have
$$
\left\| M^B \right\|_{L_k^\infty(\check L_j)}
\left\| (M^B)^{-1} \right\|_{L_k^\infty(\check L_j)}<  c<\infty, \quad c>0.
$$
Hence, inequality \eqref{veiest} becomes
\begin{equation}\label{veiest2}
\left\| V^\err-I \right\|_{L_k^p(\check L_j)}
\leqslant
c\,
\big\| V_j^{(4)}-I \big\|_{L_k^p(\check L_j)}, \quad c>0.
\end{equation}

The right-hand side of inequality \eqref{veiest2} can be estimated
by exploiting the exponential decay along $\check L_j$.
For example, for $j=2$ inequality \eqref{veiest2} reads
\begin{equation}\label{veiest3}
\left\| V^\err-I \right\|_{L_k^p(\check L_2)}
\leqslant
c
\begin{pmatrix}
0
&
0
\\
\left\| \frac{r e^{-2i(\theta t -g)} }{1+r\bar r}\,\delta^{-2}\right\|_{L_k^p(\check L_2)}
&
0
\end{pmatrix}.
\end{equation}
Recall that the function $\delta$ defined by equation \eqref{deldef} is nonzero, analytic away from $(-\infty, k_1]$ and $O(1)$ as $k \to \infty$. Similarly, the function $g$ defined by equation \eqref{gdef} is analytic away from $B$ and  $O(1)$ as $k \to \infty$. Thus, 
\begin{equation}
\Big\| \frac{r e^{-2i(\theta t -g)} }{1+r\bar r}\,\delta^{-2}\Big\|_{L_k^p(\check L_2)}
\leqslant
c\,
\Big\| \frac{r}{1+r\bar r}\, e^{-2i\theta t } \Big\|_{L_k^p(\check L_2)},
\quad c>0.
\end{equation}
Furthermore, since $\check L_2$ lies outside  the disk $D_{k_1}^\varepsilon$, we have
$\| \Re(i\theta)\|_{L^\infty(\check L_2)}>C>0.$
Hence, exploiting also the analyticity of the reflection coefficient $r$ provided by Lemma \ref{an-lem},
we find
\begin{equation}\label{veiest3b}
\left\| \dfrac{r}{1+r\bar r}\, e^{-2i\theta t }  \right\|_{L_k^p(\check L_2)}
<\tilde C e^{-Ct} \quad \forall t>T>0, \quad  C, \tilde C>0.
\end{equation}
Combining inequalities \eqref{veiest3}-\eqref{veiest3b} we then obtain the estimate 
$$
\left\| V^\err-I \right\|_{L_k^p(\check L_2)}
\leqslant
\tilde C
e^{-Ct} \quad \forall t>T>0,\quad  C, \tilde C>0.
$$
Similar estimates hold along the remaining contours. Overall, we have 
\begin{equation}\label{veiestj}
\left\| V^\err-I \right\|_{L_k^p(\check L_j)}
\leqslant
\tilde C
e^{-Ct}
 \quad \forall t>T>0, \quad  C, \tilde C>0, \quad j=1, 2, 3,4.
\end{equation}

Next, we  wish to estimate $  V^\err-I $ along the circle $\p D_{k_1}^\varepsilon$.
Recall, however, that now
\begin{equation}\label{jak}
V^\err=M^{\asymp-}(V_D^\asymp)^{-1}(M^{\asymp-})^{-1},
\end{equation}
where $V_D^\asymp$ is the \textit{unknown} jump  of $M^\asymp$ across  $\p D_{k_1}^\varepsilon$. Thus, we first need to estimate this unknown jump.

\begin{lemma*}
The jump $V_D^\asymp$ of $M^\asymp$ across the circle $\p D_{k_1}^\varepsilon$ admits the estimate
\begin{equation}\label{vdat}
V_D^\asymp
=
I
+
O\big(t^{-\frac 12}\big),
\quad t\to\infty.
\end{equation}
\end{lemma*}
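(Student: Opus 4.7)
The plan is to construct a local parametrix $M^D$ inside the disk $D_{k_1}^\varepsilon$ in terms of parabolic cylinder functions and then verify that on the boundary of the disk this parametrix matches the outer model $M^B$ up to an error of order $t^{-1/2}$. Since $V_D^\asymp = (M^{\asymp-})^{-1} M^{\asymp+}$ across $\partial D_{k_1}^\varepsilon$ and $M^{\asymp-}=M^B$, $M^{\asymp+}=M^D$ there, the claim reduces to showing that $(M^B)^{-1} M^D = I + O(t^{-1/2})$ on $\partial D_{k_1}^\varepsilon$.

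The first step is to introduce a local variable that rescales the quadratic stationary point of $\theta$ at $k_1$. Using $\theta(\xi,k)=\theta(\xi,k_1)+\tfrac{1}{2}\theta''(\xi,k_1)(k-k_1)^2+O((k-k_1)^3)$, I would set
\begin{equation*}
\zeta = \sqrt{-2t\,\theta''(\xi,k_1)}\,(k-k_1),
\end{equation*}
noting that $\theta''(\xi,k_1)\neq 0$ throughout the plane wave region. Under this rescaling, the contours $\check L_j$ entering the disk become four rays through the origin in the $\zeta$-plane, and the jump matrices $V_j^{(4)}$ of the Riemann-Hilbert problem \ref{rhp4}, once one freezes $r$ at $r(k_1)$ and $\delta$ at its value at $k_1$, reduce to constant matrices that are precisely those solved in the standard parabolic cylinder model problem (as in \cite{dz1993}). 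The resulting local model problem has an explicit solution $M^{\mathrm{PC}}(\zeta)$ in terms of parabolic cylinder functions, and it is known to admit the expansion
\begin{equation*}
M^{\mathrm{PC}}(\zeta) = I + \frac{M_1^{\mathrm{PC}}}{\zeta} + O\!\bigl(\zeta^{-2}\bigr), \quad \zeta\to\infty,
\end{equation*}
where $M_1^{\mathrm{PC}}$ depends only on $r(k_1)$.

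Second, I would define the parametrix on the disk by
\begin{equation*}
M^D(k) = M^B(k)\, \mathcal E(k)\, M^{\mathrm{PC}}\bigl(\zeta(k)\bigr)\, \mathcal E(k)^{-1},
\end{equation*}
where $\mathcal E(k)$ is an analytic prefactor (of the form $e^{i\phi(k)\sigma_3}$ with $\phi$ absorbing the difference between the frozen and the true phase/reflection at $k_1$) chosen so that $M^D$ exactly satisfies the jump conditions $V_j^{(4)}$ along $\hat L_j = L_j\cap D_{k_1}^\varepsilon$. The analyticity of $\delta$, $g$, and $r$ in a neighborhood of $k_1$ (guaranteed by Lemma \ref{an-lem}) makes this construction well-defined and ensures $\mathcal E$ and $M^B$ are uniformly bounded and invertible on $\overline{D_{k_1}^\varepsilon}$.

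The third step is the matching on $\partial D_{k_1}^\varepsilon$. For $k$ on this fixed circle, $|\zeta(k)|$ is of order $\sqrt t$, so the large-$\zeta$ asymptotics of $M^{\mathrm{PC}}$ yield
\begin{equation*}
M^D(k) = M^B(k)\,\Bigl[I + \frac{\mathcal E\,M_1^{\mathrm{PC}}\,\mathcal E^{-1}}{\zeta(k)} + O(t^{-1})\Bigr],
\end{equation*}
and hence
\begin{equation*}
V_D^\asymp(k) = (M^B)^{-1} M^D = I + \frac{1}{\sqrt t}\,\frac{\mathcal E(k)\,M_1^{\mathrm{PC}}\,\mathcal E(k)^{-1}}{\sqrt{-2\,\theta''(\xi,k_1)}\,(k-k_1)} + O\!\bigl(t^{-1}\bigr),
\end{equation*}
uniformly for $k\in\partial D_{k_1}^\varepsilon$, which is precisely the stated estimate $V_D^\asymp = I + O(t^{-1/2})$ as $t\to\infty$.

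The main obstacle is not any single calculation but rather the careful bookkeeping needed in the construction of $\mathcal E$: one must show that the frozen reflection coefficient $r(k_1)$, together with the phase factor $\delta(k_1)^2 e^{-2ig(k_1)}\,e^{2it\theta(\xi,k_1)}$, reassembles correctly so that the local jumps along the four rays agree exactly with those of the parabolic cylinder model after conjugation by $\mathcal E$. This matching follows from the analyticity and H\"older continuity of $r$ near $k_1$ (the contribution of $r(k)-r(k_1)$ being absorbed into the $O(t^{-1})$ error by standard estimates, cf.\ \cite{dz1993,dvz1994}), but it is the step where one must verify that the $\delta^{\pm 2}$ and $e^{\pm 2ig}$ factors in \eqref{jump4b}-\eqref{jump4c} combine with the oscillatory exponential in the right way.
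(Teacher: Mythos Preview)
Your approach is the standard parabolic-cylinder parametrix construction and reaches the right conclusion, but it is organized differently from the paper and contains an internal inconsistency that you should resolve.

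The paper does not build $M^D$ explicitly from a model $M^{\mathrm{PC}}$. Instead it introduces an \emph{exact} conformal variable $z$ with $\theta(k)t=\theta(k_1)t+z^2$ (not your linearization $\zeta=c\sqrt t\,(k-k_1)$), writes $M^D=M^B\,m^D(z)$, and treats $m^D$ as the solution of the Riemann--Hilbert problem on all of $\mathbb C_z$ with the \emph{exact} jumps $V_j^{(4)}(k(z))$. Invoking \cite{dz1993} then gives $m^D(z)=\delta_0^{\hat\sigma_3}\tilde m_\infty^D(z)+O(t^{-1/2}\ln t)$, where $\tilde m_\infty^D$ solves the frozen model problem with $r(k_1)$; since $|z|\sim\sqrt t$ on $\partial D_{k_1}^\varepsilon$, the large-$z$ expansion yields $V_D^\asymp=m^D=I+O(t^{-1/2})$. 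Thus the paper defines $M^D$ implicitly and approximates it afterward, whereas you attempt to write it down directly.

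The gap in your version is the claim that a \emph{diagonal} analytic prefactor $\mathcal E=e^{i\phi\sigma_3}$ can be chosen so that $M^D$ \emph{exactly} satisfies the jumps $V_j^{(4)}$. Diagonal conjugation multiplies the $12$-entry by $e^{2i\phi}$ and the $21$-entry by $e^{-2i\phi}$; matching the lower-triangular jump $V_3^{(4)}$ forces $e^{2i\phi}$ to carry the factor $r(k_1)/r(k)$, while matching the upper-triangular jump $V_4^{(4)}$ forces it to carry $\bar r(k)/\bar r(k_1)$, and these coincide only if $r\bar r$ is constant near $k_1$. Your final paragraph effectively concedes this by saying the $r(k)-r(k_1)$ discrepancy is ``absorbed into the $O(t^{-1})$ error'', but then $M^D$ no longer satisfies the exact jumps required by the decomposition \eqref{rhpP}, and $M^\err$ acquires additional (small) jumps along $\hat L_j$ inside the disk which you have not estimated. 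Either route works --- exact matching via the conformal variable $z$ together with the usual sector-dependent triangular corrections that turn $r(k)$ into $r(k_1)$, or approximate matching with those extra internal jumps controlled separately --- but you should commit to one rather than mixing the two.
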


\begin{proof}
Similarly to \cite{dz1993}, for $|k-k_1|\leqslant \varepsilon$ with $\varepsilon$ sufficiently small we take the Taylor series of the function $ \theta$ about the point $k_1$ to write
\begin{equation}\label{zdef2}
 \theta(k) t =  \theta(k_1) t+z^2,
\end{equation}
where 
\begin{equation}\label{zdef}
z 
=
\sqrt t \left(k-k_1\right) \Big(\sum_{n=0}^\infty \theta_{n+2} \left(k-k_1\right)^n\Big)^{\frac 12},
\quad \theta_n = \frac{\theta^{(n)}(k_1)}{n!}.
\end{equation}
Furthermore, as $k\in \overline{D_{k_1}^\varepsilon}$ we   can write
$
\left(
\sum_{n=0}^\infty \theta_{n+2} \left(k-k_1\right)^n
\right)^{\frac 12}
=
\sum_{n=0}^\infty \alpha_n \left(k-k_1\right),
$
$
\alpha_n\in\mathbb C, 
$
hence, equation \eqref{zdef} implies
\begin{equation}\label{zapp}
\frac{z}{\sqrt t}
=
\sum_{n=1}^\infty \alpha_{n-1}\left(k-k_1\right)^n,\quad k\in \overline{D_{k_1}^\varepsilon}.
\end{equation}
Moreover, for $\varepsilon$ sufficiently small equation
\eqref{zapp} can be inverted via recursive approximations to yield  
$$
k=k_1+\sum_{n=1}^\infty \beta_n \big(\tfrac{z}{\sqrt t}\big)^n,\quad k\in \overline{D_{k_1}^\varepsilon}.
$$

Next, we let
\begin{equation}\label{mpdefb}
M^D( k)
=
M^B( k) \ m^D\Big(\sqrt t \sum_{n=1}^\infty \alpha_{n-1} \left(k-k_1\right)^n\Big),\quad k\in D_{k_1}^\varepsilon,
\end{equation}
which equivalently reads
\begin{equation}\label{mpdef}
m^D(z)
=
\left(M^B\right)^{-1}
\!
\Big(
k_1+\sum_{n=1}^\infty \beta_n \big(\tfrac{z}{\sqrt t}\big)^n
\Big)
\
M^D
\Big(k_1+\sum_{n=1}^\infty \beta_n \big(\tfrac{z}{\sqrt t}\big)^n\Big),\quad z\in D_0^\varepsilon.
\end{equation}
Note that the presence of $M^B$
does not affect the jump conditions of $M^D$ inside the disk $D_{k_1}^\varepsilon$ since  $M^B$ is analytic there.
Furthermore, the mapping 
\begin{equation}\label{map-conf}
\overline{D_{k_1}^\varepsilon}\ni k\mapsto z = \sqrt t \sum_{n=1}^\infty \alpha_{n-1} \left(k-k_1\right)^n
\in \overline{D_0^\varepsilon}
\end{equation}
is conformal and, as such, it preserves angles locally. Hence, there exists an appropriate choice of the  jump contours $\hat L_j$ that  lie inside the disk $D_{k_1}^\varepsilon$, which is mapped under \eqref{map-conf} to certain contours $\hat \Sigma_j$ (see Figure \ref{k_to_z}) that lie inside the disk $D_0^\varepsilon$ and  form a cross centered at $z=0$ such that its extension does not intersect with the branch cut $B$.
\begin{figure}
\begin{center}
\includegraphics[scale=.9]{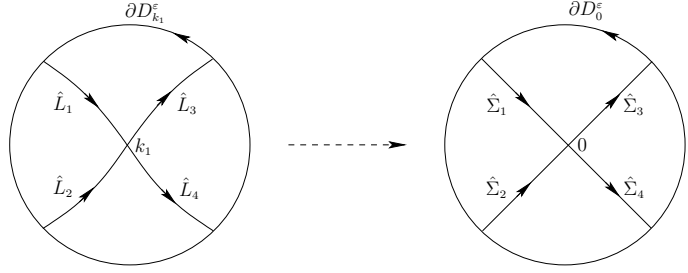}
\caption{The transformation from the contours $\hat L_j$ to the contours $\hat \Sigma_j$, $j=1,2,3,4$.}
\label{k_to_z}
\end{center}
\end{figure}
Thus, for $k\in D_{k_1}^\varepsilon$ we infer from equations \eqref{rhpP} and \eqref{mpdef} that the function $m^D(z)=m^D(x, t, z)$ is analytic in $D_{0}^{\varepsilon}\setminus \cup_{j=1}^4 \hat \Sigma_j  $ and satisfies the jump conditions
\begin{equation}
m^{D+}(z)
=
m^{D-}(z)
\
V_j^{(4)}
\Big(k_1+\sum_{n=1}^\infty \beta_n \big(\tfrac{z}{\sqrt t}\big)^n\Big),\quad z\in \hat \Sigma_j,\ j=1, 2,3,4.\label{tsipf}
\end{equation}
Extending the jump conditions \eqref{tsipf} in the whole complex $k$-plane implies that $m^D$ is analytic in $\mathbb C\setminus  \cup_{j=1}^4 \Sigma_j$ and satisfies the jump conditions
\begin{equation}
m^{D+}(z) = m^{D-}(z) \,
V_j^{(4)} \Big(k_1+\sum_{n=1}^\infty \beta_n \big(\tfrac{z}{\sqrt t}\big)^n\Big), \quad z\in \Sigma_j,\, j=1,2,3,4,
\label{rhpPb}\\
\end{equation}
and the normalization condition
$
m^D(z) = I +O \left(1/z\right),$ $z\to \infty,
$
where the contours $\Sigma_j$ are the extensions of the contours $\hat \Sigma_j$ outside the disk $D_0^\varepsilon$.

Since $r(k)=O(1/k)$ as $k\to \infty$ (see \cite{bk2014}), we may now proceed similarly to   \cite{dz1993}.
Eventually, we find 
$$
m^D(x, t, z)
=
 \delta_0^{\hat \sigma_3}\, \tilde m_\infty^D(x, z)+ O\big(t^{-\frac 12}\ln t\big), \quad t\to \infty,
$$
where for
$
\upnu = -\frac{1}{2\pi}\, \ln\left(1+|r(k_1)|^2\right)
$
and
$
\upchi(k)=-\frac{1}{2i\pi}\int_{-\infty}^{k_1} \ln(k-\nu) \ d\left[\ln\left(1+|r(\nu )|^2\right)\right]
$
we define
$
\delta_0
=
\beta_1^{i\upnu} t^{-\frac{i\upnu}{2}}e^{\upchi(k_1)}e^{i \theta(k_1) t-ig(k_1)}.
$
Moreover, the $t$-independent function $\tilde m_\infty^D(z)=\tilde m_\infty^D(x,  z)$ is analytic in $\mathbb C\setminus \cup_{j=1}^4 \Sigma_j $ and satisfies the jump conditions
$$
\tilde m_\infty^{D+}(z) = \tilde m_\infty^{D-}(z)\,
V_j^{\infty}(z),\quad z\in \Sigma_j,\, j=1,2,3,4,
$$
and the normalization condition
$
\tilde m_\infty^D(z) = I +O \left(1/z\right),$ $z\to \infty,
$
with
\begin{align*}
&V_1^{\infty}
=
\begin{pmatrix}
1
&
\frac{\bar r(k_1) }{1+\left|r(k_1)\right|^2}\, z^{2i\nu}e^{2iz^2}
\\
0
&
1
\end{pmatrix},
\quad
V_2^{\infty}
=
\begin{pmatrix}
1
&
0
\\
\frac{r(k_1)}{1+\left|r(k_1)\right|^2}\, z^{-2i\nu}e^{-2iz^2}
&
1
\end{pmatrix},
\nonumber\\
&
V_3^{\infty}
=
\begin{pmatrix}
1
&
0
\\
r(k_1) \, z^{-2i\nu}e^{-2iz^2}
&
1
\end{pmatrix},
\hskip .55cm
V_4^{\infty}
=
\begin{pmatrix}
1
&
\bar r(k_1)\, z^{2i\nu}e^{2iz^2}
\\
0
&
1
\end{pmatrix}.
\end{align*}
Thus, since $M^{\asymp +}=M^D$ and $M^{\asymp -}=M^B$ for $k\in \p D_{k_1}^\varepsilon$, using equation \eqref{mpdef} we find
$$
V_D^\asymp = m^D\Big(\sqrt t \sum_{n=1}^\infty \alpha_{n-1} \left(k-k_1\right)^n\Big),\quad k\in \p D_{k_1}^\varepsilon.
$$
Then, since   $z\to\infty$ in the limit $t\to \infty$
for $k\in \p D_{k_1}^\varepsilon$, we  use the large-$z$ expansion of $m^D(z)$ evaluated at $\sqrt t \sum_{n=1}^\infty \alpha_{n-1} \left(k-k_1\right)^n$ to obtain
$$
V_D^\asymp
=
I
+
\frac{m_1^D}{\sqrt t \sum_{n=1}^\infty \alpha_{n-1} \left(k-k_1\right)^n}
+
O\bigg(\Big[\sqrt t \sum_{n=1}^\infty \alpha_{n-1} \left(k-k_1\right)^n\Big]^{-2}\bigg),
\quad t\to\infty,
$$
from which we infer estimate \eqref{vdat}
\end{proof}

Returning to equation \eqref{jak} we have 
\begin{equation} 
\left\| V^\err-I \right\|_{L_k^p(\p D_{k_1}^\varepsilon)}
\leqslant
\left\| V^\err-I \right\|_{L_k^\infty(\p D_{k_1}^\varepsilon)}
\bigg(\int_{k\in D_{k_1}^\varepsilon} \left|dk\right|\bigg)^{\frac 1p}
\leqslant
\left(2\pi \varepsilon\right)^{\frac 1p}
O\big(t^{-\frac 12}\big).\label{veiestd}
\end{equation}
Combining inequalities \eqref{veiestj} and \eqref{veiestd}  we thereby obtain the estimate
\begin{equation}\label{veiest-t}
\left\| V^\err-I \right\|_{L_k^p(\cup_j\check L_j\cup \p D_{k_1}^\varepsilon)}
\leqslant
O\big(t^{-\frac 12}\big), \quad t\to \infty.
\end{equation}

Returning to inequality \eqref{m1e-est0}, it remains to estimate $M^{\err-}-I$. This is done by using standard estimates that involve  the Cauchy transform (see \cite{bv2007}). 
Eventually, we find
\begin{equation}
\left\| M^{\err-}-I\right\|_{L_k^2(\cup_j\check L_j\cup \p D_{k_1}^\varepsilon)}
\leqslant
C
\left\|
 V^\err-I 
\right\|_{L_k^2(\cup_j\check L_j\cup \p D_{k_1}^\varepsilon)},
\quad C>0.\label{me-est}
\end{equation}
Inequalities \eqref{m1e-est0}, \eqref{veiest-t} and \eqref{me-est} 
 complete the proof of estimate \eqref{m1e-est1}.

%
%
%
%
%
%
%
%
%
\paragraph{Acknowledgements.} 
The authors thank 
M.~J. Ablowitz,
P. Deift, 
G. El, 
R. Jenkins, 
B.~Prinari
and M. Ramachandran
for useful discussions and comments.  
This work was partially supported by the National Science Foundation under grant number DMS-1311847.
D.M. also acknowledges an AMS-Simons Travel Grant.
%


\vskip 10mm
\noindent{\bf Gino Biondini}\\
Department of Mathematics\\
State University of New York at Buffalo\\
Buffalo, NY 14260\\
e-mail: biondini@buffalo.edu

\flushright{
\vskip-1.02in
\noindent{\bf Dionyssios Mantzavinos}\\
Department of Mathematics\\
State University of New York at Buffalo\\
Buffalo, NY 14260\\
e-mail: dionyssi@buffalo.edu}


\begin{thebibliography}{Oo}
 
\bibitem{akns1973} 
M.J. Ablowitz, D.J. Kaup, A.C. Newel and H. Segur, 
\textit{Method for solving the sine-Gordon equation.} 
Phys. Rev. Lett. \textbf{30} (1973).

\bibitem{az} 
D.E. Agafontsev and V.E. Zakharov,
\textit{Integrable turbulence and formation of rogue waves.} arXiv:1409.4692 (2014).


\bibitem{bc1984} 
R. Beals and R.R. Coifman,
\textit{Scattering and inverse scattering for first order systems.}
Comm. Pure Appl. Math. \textbf{37} (1984), 39-90.

\bibitem{bbeim1994} 
E.D. Belokolos, A.I. Bobenko, V.Z. Enol'skii, A.R. Its and V.B. Matveev,
\textit{Algebro-geometric approach to nonlinear integrable equations}, Springer-Verlag, 1994.

\bibitem{br1969} 
D.J. Benney and G.J. Roskes,
\textit{Wave instabilities}.
Stud. Appl. Math. \textbf{48} (1969).
 
 
\bibitem{bf2015} 
G. Biondini and E. Fagerstrom,
\textit{The integrable nature of modulational instability}. 
SIAM J. Appl. Math. \textbf{75} (2015), no. 1, 136-163.

\bibitem{bf1967} 
T.B. Benjamin and J.E. Feir, 
\textit{The disintegration of wave trains on deep water}. 
J. Fluid Mech. \textbf{27} (1967).

\bibitem{bn1967} 
D.J. Benney and A.C. Newell, \textit{Propagation of nonlinear wave envelopes}. 
Stud.  Appl. Math. \textbf{46} (1967), 133-139.
 
\bibitem{bk2014} 
G. Biondini and G. Kova\v ci\v c,
\textit{Inverse scattering transform for the focusing nonlinear
Schr\"odinger equation with nonzero boundary conditions}. J. Math. Phys. \textbf{55} (2014), 031506.

\bibitem{bsz2015} 
J.L. Bona, S.M. Sun and B.-Y. Zhang, \textit{Nonhomogeneous boundary value problems of one-dimensional nonlinear 
Schr\"odinger equations.}  arXiv:1503.00065v1 (2015).


\bibitem{Bourgain1993} 
J. Bourgain, 
 {\em Fourier transform restriction phenomena for certain lattice subsets and applications to nonlinear evolution equations.  I. 
 Schr\"odinger equations}. 
 Geom. Funct. Anal. {\bf 3} (1993), no. 2, 107-156. 
 
 
\bibitem{Bourgain1999} 
J. Bourgain, 
 {\em Global solutions of nonlinear Schr\"odinger equations}.
  AMS Colloquium Publications
  {\bf 46}, American Mathematical Society, Providence, RI (1999).

\bibitem{bks2011} 
A. Boutet de Monvel, V.P. Kotlyarov and D. Shepelsky,
\textit{Focusing NLS equation: long-time dynamics of step-like initial data.} Int. Math. Res. Not. \textbf{2011} (2011), 1613-1653.


\bibitem{bv2007} 
R. Buckingham and S. Venakides,
\textit{Long-time asymptotics of the nonlinear Schr\"odinger equation shock problem.} Comm. Pure Appl. Math. \textbf{60} (2007), 1349-1414.

\bibitem{byrd1971} 
P.F. Byrd and M.D. Friedman,
\textit{Handbook of elliptic integrals for engineers and scientists}, Springer-Verlag, 1971, 2nd ed. (revised).

\bibitem{ce1987} 
F. Calogero and W. Eckhaus,
\textit{Nonlinear evolution equations, rescalings, model PDEs and their integrability: I.} Inv. Probl. \textbf{3} (1987).

\bibitem{cb1991} 
R. Carroll and C. Bu, \textit{Solution of the forced NLS equation using PDE techniques}. Appl. Anal. \textbf{41} (1991), 33-51.

\bibitem{Cazenave2003} 
T. Cazenave,
{\em Semilinear Schr\"odinger equations}, Courant Lecture Notes in Mathematics \textbf{10}, New York University, Courant Institute of Mathematical Sciences, New York; American Mathematical Society, Providence, RI, 2003.
  
\bibitem{cw2003} 
T. Cazenave and F. Weissler,
{\em The Cauchy problem for the critical nonlinear Schr\"odinger equation in $H^s$}. Nonlin. Anal. \textbf{14} (1990), no. 10, 807-836. 

\bibitem{cha2011} 
J.A. Chabchoub, N.P. Hoffmann and N. Akhmediev,
\textit{Rogue wave observation in a water wave tank}.
Phys. Rev. Lett. \textbf{106} (2011), 204502.

\bibitem{cgt1964} 
R.Y. Chiao, E. Garmire and C.H. Townes,
\textit{Self-trapping of optical beams}.
Phys. Rev. Lett. \textbf{13} (1964); Erratum Phys. Rev. Lett. \textbf{14} (1965).

\bibitem{cks1995} 
W. Craig, T. Kappeler and W. Strauss, 
{\em Microlocal dispersive smoothing for the Schr\"odinger equation}.
Comm. Pure Appl. Math. \textbf{48} (1995), no. 8, 769-860.

\bibitem{css1992} 
W. Craig, C. Sulem and P.-L. Sulem, 
\textit{Nonlinear modulation of gravity waves: a rigorous approach}.
Nonlinearity \textbf{5} (1992), 497-522.

\bibitem{d1999} 
P. Deift, \textit{Orthogonal polynomials and random matrices: a Riemann-Hilbert approach}. Courant
Lecture Notes in Mathematics \textbf{3}, New York University, Courant Institute of Mathematical Sciences, AMS, 1999.

\bibitem{dkmvz1999} 
P. Deift, T. Kriecherbauer, K.T.-R. McLaughlin, S. Venakides and X. Zhou, 
\textit{Uniform asymptotics
for polynomials orthogonal with respect to varying exponential weights and applications
to universality questions in random matrix theory.} Comm. Pure Appl. Math. \textbf{52} (1999), no. 11, 1335-1425.

\bibitem{dvz1994} 
P.  Deift, S. Venakides and X. Zhou,
\textit{The collisionless shock region for the long-time behavior of solutions of the KdV equation}. Comm. Pure Appl. Math. \textbf{47} (1994), 199-206.

\bibitem{dvz1997} 
P.  Deift, S. Venakides and X. Zhou,
\textit{An extension of the steepest descent method for Riemann-Hilbert problems: the small dispersion limit of the Korteweg-de Vries (KdV) equation}. Proc. Natl. Acad. Sci. USA \textbf{95} (1998), 450-454.


\bibitem{dz1993} 
P.  Deift and X. Zhou,
\textit{A steepest descent method for oscillatory Riemann-Hilbert
problems. Asymptotics for the mKdV equation}. Ann. of Math. \textbf{137} (1993), 295-370.

\bibitem{dz1995} 
P.  Deift and X. Zhou, 
\textit{Long-time behavior of the non-focusing nonlinear Schr\"odinger equation --
a case study}. New Series, Lectures in Mathematical Sciences \textbf{5}, University of Tokyo, Tokyo, 1995.

\bibitem{el1993} 
G.A. El, A.V. Gurevich, V.V. Khodorovskii and A.L. Krylov,
\textit{Modulation instability and formation of a nonlinear oscillatory structure in a focusing medium}.
Phys. Lett. A \textbf{177} (1993), 357-361.

\bibitem{ft1987} 
L.D. Faddeev and L.A. Takhtajan, \textit{Hamiltonian methods in the theory of solitons,} Springer, 1987.

\bibitem{fk1980} 
H.M. Farkas and I. Kra, \textit{Riemann Surfaces},  Springer Graduate Texts in Mathematics no. 71, 1980, 1st ed.

\bibitem{fhm2015} 
A.S. Fokas, A.A. Himonas and D. Mantzavinos,
\textit{The nonlinear Schr\"odinger equation on the half-line}.  Trans. AMS (2015), doi: 10.1090/tran/6734 (to appear).
 
\bibitem{fi2004} 
A.S. Fokas and A.R. Its, 
\textit{The nonlinear Schr\"odinger equation on the interval}. 
J. Phys. A: Math. Gen. \textbf{37} (2004), 6091-6114.

\bibitem{fis2005} 
A.S. Fokas, A.R. Its and L.Y. Sung, 
\textit{The nonlinear Schr\"odinger equation on the half-line}.
Nonlinearity \textbf{18} (2005), 1771-1822.

\bibitem{gk2012} 
J. Garnier and K. Kalimeris,
\textit{Perturbed inverse scattering theory for the nonlinear Schr\"odinger equation with non-vanishing boundaries.}
J. Phys. A: Math. Theor. \textbf{45} (2012), 035202. 

\bibitem{gz2014} 
A.A. Gelash and V.E. Zakharov, 
\textit{Superregular solitonic solutions: a
novel scenario for the nonlinear stage of modulation instability.}
Nonlinearity \textbf{27} (2014), no. 4, 1-39.

\bibitem{gs1993} 
J. Ghidaglia and J.-C. Saut,
{\em Nonelliptic Schr\"odinger equations}. J. Nonlin. Sci. \textbf{3} (1993), no. 2, 169-195. 

\bibitem{gv1979} 
J. Ginibre and G. Velo,
{\em On a class of nonlinear Schr\"odinger equations. II. Scattering theory, general case}. 
J. Funct. Anal. \textbf{32} (1979), no. 1, 33-71.

\bibitem{g1991} 
D.N. Gosh Roy, 
\textit{Mathods of inverse problems in Physics,}
CRC Press, 1991.

\bibitem{ho1972} 
H. Hasimoto and H. Ono, 
\textit{Nonlinear modulation of gravity waves}.
J. Phys. Soc. Japan \textbf{33}
(1972), 805-811.

\bibitem{h2005} 
J. Holmer, \textit{The initial-boundary-value problem for the 1D nonlinear Schr\"odinger equation on the half-line.}
Diff. Int. Eq. \textbf{18} (2005), no. 6, 647-668.

\bibitem{ik1976} 
A.R. Its and V.P. Kotlyarov, 
\textit{Explicit formulas for solutions 
of the Schr\"odinger nonlinear equa-
tion}.
Doklady Akad. Nauk Ukr. SSR A \textbf{10} (1976), 965-968.


\bibitem{jm2013} 
R. Jenkins and K.D.T-R McLaughlin, \textit{Semiclassical limit of focusing NLS for a family of square barrier initial data}. 
Comm. Pure Appl. Math. \textbf{67} (2013), 246-320.

\bibitem{j2014} 
R. Jenkins, \textit{Regularization of a sharp shock by the defocusing nonlinear Schr\"odinger equation}. 
arXiv:1402.4708 (2014).

\bibitem{Kam-book} 
A.M. Kamchatnov, 
\textit{Nonlinear periodic waves and their modulations}, World Scientific, 2000.

\bibitem{k1993} 
S. Kamvissis,
\textit{On the long time behavior of the doubly infinite Toda lattice under initial
data decaying at infinity}.
Comm. Math. Phys. \textbf{153}  (1993), no. 3, 
479-519.

\bibitem{Kato1995} 
T. Kato, 
{\em On nonlinear Schr\"odinger equations. II. $H^s$-solutions and unconditional well-posedness}. J. Anal. Math. \textbf{67} (1995), 281-306.
   
\bibitem{kpv1991} 
C.E. Kenig, G. Ponce and L. Vega, 
\textit{Oscillatory integrals and regularity of
dispersive equations}. 
Indiana Univ. Math. J. \textbf{40} (1991), 33-69.


\bibitem{k1977} 
E.A. Kuznetsov, \textit{Solitons in a parametrically unstable plasma.} Sov. Phys. Dokl. \textbf{22} (1977), no. 9, 507-508.

\bibitem{km1975} 
E.A. Kuznetsov and A.V. Mikhailov,
\textit{Stability of stationary waves in nonlinear weakly dispersive media.}
Sov. Phys. JETP {\bf 40} (1975), no. 5, 855-859.

\bibitem{ks1999} 
E.A. Kuznetsov and M.D. Spector, 
\textit{Modulation instability of soliton trains in fiber communication systems.} 
Theor. Math. Phys. \textbf{120} (1999), no. 2, 997-1008.

\bibitem{ksf1984} 
E.A. Kuznetsov, M.D. Spector and G.E. Fal'kovich, 
\textit{On the stability of nonlinear waves in integrable models.} Physica D \textbf{10} (1984), no. 3, 379-386.

\bibitem{l2013} 
D. Lannes, 
\textit{The water waves problem: mathematical analysis and asymptotics}, 
AMS, 2013.

 \bibitem{LP2009} 
   F. Linares and G. Ponce, 
   {\em Introduction to nonlinear dispersive equations}, 
   Universitext. Springer, NY, 2009.

\bibitem{m1979} 
Y.-C. Ma, 
\textit{The perturbed plane-wave solutions of the cubic Schr\"odinger equation}.
Stud. Appl. Math. \textbf{60} (1979), 43-58.

\bibitem{mk1998} 
P.D. Miller and S. Kamvissis,
\textit{On the semiclassical limit of the focusing nonlinear Schr\"odinger equation}. 
Phys. Lett. A \textbf{247} (1998), no. 1-2, 75-86.


\bibitem{per1983} 
D.H. Peregrine,
\textit{Water waves, nonlinear Schr\"odinger equations and their solutions}. 
J. Austral. Math. Soc. \textbf{25} (1983), 1643.

\bibitem{sa1976} 
H. Segur and M.J. Ablowitz,
\textit{Asymptotic solutions and conservation laws for the nonlinear Schr\"odinger equation}.
J. Math. Phys. \textbf{17}  (1976). 

\bibitem{ss-book} 
C. Sulem and P.-L. Sulem,
\textit{The nonlinear Schr\"odinger equation: self-focusing and wave collapse},
Springer, 1999. 

\bibitem{t1964} 
V.I. Talanov, \textit{Self-focusing of electromagnetic waves in nonlinear media}. Radiophysics \textbf{8} (1964), 254-257.

\bibitem{tvz2004} 
A. Tovbis, S. Venakides and X. Zhou,
\textit{On semiclassical (zero dispersion limit) solutions of the
focusing nonlinear Schr\"odinger equation}. Comm. Pure Appl. Math. \textbf{57} (2004), 877-985.

\bibitem{Tsutsumi1987} 
Y. Tsutsumi, 
{\em  $L^2$-solutions for nonlinear Schr\"odinger equations and nonlinear groups}.
Funkcial. Ekvac. \textbf{30} (1987), no. 1, 115-125.

\bibitem{ww} 
E.T. Whittaker and G.N. Watson,
\textit{A course of modern analysis}, Cambridge University Press, 1950, 4th ed.

\bibitem{z1968} 
V.E. Zakharov, 
\textit{Stability of periodic waves of finite amplitude on the surface of a deep fluid}.
J. Applied Mech. Tech. Phys. \textbf{9} (1968), 190-194.

\bibitem{z2009} 
V.E. Zakharov, 
\textit{Turbulence in integrable systems.} 
Stud. Appl. Math. \textbf{122} (2009), no. 3, 219-234.

\bibitem{zg2013} 
V.E. Zakharov and A.A. Gelash,
\textit{Nonlinear stage of modulation instability}.
Phys. Rev. Lett. \textbf{111} (2013).

\bibitem{zm1976} 
V.E. Zakharov and S.V. Manakov,
\textit{Asymptotic behavior of nonlinear waves systems integrated by the inverse scattering method}.
Sov. Phys. JETP \textbf{44}   (1976).

\bibitem{zo2009} 
V.E. Zakharov and L.A. Ostrovsky, 
\textit{Modulational instability: the beginning}. Phys. D \textbf{238}   (2009).

\bibitem{zs1972} 
V.E. Zakharov and A.B. Shabat,
\textit{Exact theory of two-dimensional self-focusing 
and one-dimensional self-modulation of waves in nonlinear media}.
Sov. Phys. JETP \textbf{34} (1972), no. 1, 63-69.

\bibitem{zs1973} 
V.E. Zakharov and A.B. Shabat,
\textit{Interaction between solitons in a stable medium}.
Sov. Phys. JETP \textbf{37} (1973).

\end{thebibliography}
\end{document}